\DeclarePairedDelimiter{\floor}{\lfloor}{\rfloor}
\DeclarePairedDelimiter{\ceil}{\lceil}{\rceil}
\newtheorem{lemma}{Lemma}[section]
\newtheorem{theorem}[lemma]{Theorem}
\newtheorem{proposition}[lemma]{Proposition}
\newtheorem{prop}[lemma]{Proposition}
\newtheorem{cor}[lemma]{Corollary}
\newtheorem{claim*}{Claim}
\newtheorem{thm}[lemma]{Theorem}
\theoremstyle{definition}
\newtheorem{remark}[lemma]{Remark}
\newtheorem{rmk}[lemma]{Remark}
\newtheorem{example}[lemma]{Example}
\newcommand{\Aff}{{\mathbb A}}
\renewcommand{\AA}{\Aff}
\newcommand{\PP}{{\mathbb P}}
\newcommand{\C}{{\mathbb C}}
\newcommand{\F}{{\mathbb F}}
\newcommand{\Q}{{\mathbb Q}}
\newcommand{\Z}{{\mathbb Z}}
\newcommand{\sE}{{\mathscr E}}
\newcommand{\sL}{{\mathscr L}}
\newcommand{\sF}{{\mathscr F}}
\newcommand{\sO}{{\mathscr O}}
\DeclareMathOperator{\sHom}{\mathscr{H}\text{\kern -4pt {\calligra\large om}}\,}
\newcommand{\calA}{{\mathcal A}}
\newcommand{\calB}{{\mathcal B}}
\newcommand{\calE}{{\mathcal E}}
\newcommand{\calFA}{{\mathcal F}}
\newcommand{\calL}{{\mathcal L}}
\newcommand{\calO}{{\mathcal O}}
\newcommand{\frakm}{{\mathfrak m}}
\DeclareMathOperator{\HH}{H}
\DeclareMathOperator{\Hom}{Hom}
\DeclareMathOperator{\Ext}{Ext}
\DeclareMathOperator{\ord}{ord}
\DeclareMathOperator{\Spec}{Spec}
\DeclareMathOperator{\res}{res}
\newcommand{\Xpc}{X_\mathrm{pc}}
\newcommand{\Xms}{X_\mathrm{ms}}
\newcommand{\Yms}{Y_\mathrm{ms}}
\newcommand{\isom}{\simeq}
\numberwithin{equation}{section}
\numberwithin{table}{section}
\newcommand{\defi}[1]{\emph{#1}} 
\title[Symmetric differentials]{Explicit computation of symmetric differentials and its application to quasi-hyperbolicity}
\author{Nils Bruin}
\author{Jordan Thomas}
\author{Anthony V\'arilly-Alvarado}
\thanks{ The first author acknowledges the support of the Natural Sciences and Engineering Research Council of Canada (NSERC), funding reference number RGPIN-2018-04191. The third author was partially supported by  NSF grants DMS-1352291 and DMS-1902274. }
\address{Department of Mathematics, Simon Fraser University, Burnaby, BC, Canada V5A 1S6}
\email{nbruin@cecm.sfu.ca}
\urladdr{http://www.cecm.sfu.ca/~nbruin}
\address{Department of Mathematics, Courant Institute of Mathematical Sciences, New York University, 251 Mercer Street, New York, NY 10012}
\email{jwthomas04@gmail.com}
\address{Department of Mathematics MS 136, Rice University, 6100 S.\ Main St., Houston, TX 77005, USA}
\email{av15@rice.edu}
\urladdr{http://math.rice.edu/\~{}av15}
\date{July 23, 2021}
\subjclass[2010]{Primary 14J60, 14Q10; Secondary 14J25, 14J29, 14M10}
\keywords{Algebraic hyperbolicity, nodal surfaces, symmetric differentials}
\begin{document}

\begin{abstract}
We develop explicit techniques to investigate algebraic quasi-hyperbolicity of singular surfaces through the constraints imposed by symmetric differentials.  We apply these methods to prove that rational curves on Barth's sextic surface, apart from some well-known ones, must pass through at least four singularities, and that genus 1 curves must pass through at least two.  On the surface classifying perfect cuboids, our methods show that rational curves, again apart from some well-known ones, must pass through at least seven singularities, and that genus 1 curves must pass through at least two.

We also improve lower bounds on the dimension of the space of symmetric differentials on surfaces with $A_1$-singularities, and use our work to show that Barth's decic, Sarti's surface, and the surface parametrizing $3\times 3$ magic squares of squares are all algebraically quasi-hyperbolic.

\end{abstract}

\maketitle

\section{Introduction}

A complex projective surface $Y$ is \emph{algebraically hyperbolic} if there is an $\epsilon > 0$ such that for every curve $C\subset Y$ of geometric genus $g(C)$, we have $2g(C) - 2 \geq \epsilon\deg(C)$; in particular, such a surface does not contain curves of geometric genus $0$ or $1$.  In this article we call a complex projective surface $Y$ \emph{algebraically quasi-hyperbolic} if it contains only finitely many curves of geometric genus $0$ or $1$. 

Algebraically quasi-hyperbolic surfaces abound: for example, if $Y/\C$ is a smooth proper surface of general type whose Chern classes satisfy $c_1(Y)^2 > c_2(Y)$, then $Y$ is algebraically quasi-hyperbolic, by work of Bogomolov~\cite[Corollary~5]{Bogomolov}. Surfaces of general type with an ample cotangent bundle satisfy the requisite Chern class inequality~\cite{FultonLazarsfeld}. On the other hand, for a smooth complex surface $Y\subset \PP^3$ of degree $d \geq 5$ the inequality $c_1(Y)^2 > c_2(Y)$ does not hold; nevertheless, genus bounds of Xu~\cite{Xu} show that a very general such surface is also algebraically quasi-hyperbolic, a statement that had been conjectured by Harris.  Recently, Coskun and Riedl~\cite{CoskunRiedl} improved Xu's bounds to show that a very general complex surface $Y\subset \PP^3$ of degree $d \geq 5$ is in fact algebraically hyperbolic\footnote{See~\cite[Remark~3.4]{Demailly} for a history of progress on genus bounds for curves on a very general surface $Y\subset \PP^3$.}.

Simple abelian surfaces are also algebraically quasi-hyperbolic in the sense above: it is well-known that every map from $\PP^1$ to any abelian variety is constant (e.g., such a map necessarily factors through the Albanese variety of $\PP^1$, which is a point). Alternatively, and in the spirit of our work, a regular differential on an abelian surface $Y$ would pull back to the zero differential on any genus $0$ curve $C\subset Y$. Since the cotangent sheaf $\Omega^1_Y$ is globally generated, no such $C$ can exist.

Not all is lost if $Y$ is a surface for which $\Omega^1_Y$ has no global sections, as one can mimic the above argument with a different sheaf, for instance  the symmetric powers $S^m\Omega^1_Y$. This idea is central to Bogomolov's result~\cite{Bogomolov} that if the Chern classes of $Y$ satisfy the inequality $c_1(Y)^2>c_2(Y)$, then $S^m\Omega^1_Y$ has global sections for large enough $m$, and as a consequence $Y$ is algebraically quasi-hyperbolic.

Bogomolov and de Oliveira observed in~\cite{BO} that if $\tau\colon Y\to X$ is a minimal resolution of a surface $X$ with $A_1$ singularities, then it is possible that $S^m\Omega^1_Y$ has global sections for large enough $m$ even when $c_1(Y)^2\leq c_2(Y)$.  

In principle, symmetric differentials on $Y$, or more generally elements of $\HH^0(X,(\tau_*S^m\Omega^1_Y)^{\vee\vee})$, can be used to constrain the locus of genus $0$ or $1$ curves on $X$.  To wit, an element $\omega \in \HH^0(Y,S^m\Omega^1_Y)\isom \HH^0(\PP(\Omega^1_Y),\sO_{\PP(\Omega^1_Y)}(m))$ defines a surface $Y'\subset \PP(\Omega^1_Y)$ in the cotangent bundle of $Y$ that is a multisection of degree $m$ for the projection $\PP(\Omega^1_Y) \to Y$. If, for example, $\Omega^1_Y$ is big then any curve of genus $0$ or $1$ on $Y$ lifts to a leaf of the foliation on $Y'$ induced by $\omega$. This idea is already present in Bogomolov's work~\cites{Bogomolov,Deschamps}, and was amplified by Green and Griffiths in~\cite{GreenGriffiths}. However, computing the integral curves defined by the degree $m$ first order differential equation that $\omega$ describes on $Y'$ is in general a very difficult problem, though it has been successfully carried out in a few cases: on surfaces related to B\"uchi's problem, Vojta \cite{Vojta2000} determined an explicit symmetric differential, as well as a description of the solution curves to the corresponding differential equation to determine the genus $0$ and $1$ curves. This line of research was significantly expanded by Garc\'ia-Fritz~\cites{GarciaFritz,GarciaFritzIJNT} and by Garc\'ia-Fritz and Urz\'ua~\cite{GarciaFritzUrzua} for other surfaces, including the surface parametrizing perfect cuboids, for which they show that every curve of genus $0$ or $1$ must pass through at least $2$ nodes. The calculations in~\cite{BO} are asymptotic in $m$ and thus cannot be used to explicitly determine the locus in $Y$ containing the genus $0$ and $1$ curves.\\

\subsection{Contributions to the study of symmetric differentials}
This article contributes to the study of algebraic quasi-hyperbolicity in two ways.  First, we lay out \emph{explicit methods} for the calculation of the restrictions imposed by symmetric differentials on curves of genus $0$ or $1$ on nodal surfaces, and showcase our methods on specific surfaces (e.g., Barth's sextic and the surface parametrizing perfect cuboids).  Second, we give new, non-asymptotic lower bounds for the dimensions of spaces of symmetric differentials on resolutions of complete intersection surfaces with du Val (ADE) singularities.  These bounds allow us to increase the range of surfaces covered by~\cite{BO,RoulleauRousseau} that are known to be algebraically quasi-hyperbolic, which now includes, for example, Barth's Decic~\cite{Barth1996} and Sarti's surface~\cite{Sarti2001}. See Remark~\ref{rem:BORR} for a full discussion. They also allow us to show that the surface parametrizing $3\times 3$ magic squares of squares is algebraically quasi-hyperbolic.

\subsection{Set-up}
\label{ss:set-up}
For the rest of this section, we use the following notation: $X\subset \PP^n$ denotes a complex projective surface that is a complete intersection of multidegree $(d_1,\ldots,d_{n-2})$, with a singular locus $S$  consisting of $\ell$ isolated du Val singularities $\{s_1,\ldots,s_\ell\}$. We let $\tau\colon Y\to X$ be a minimal resolution of $X$, with exceptional locus $E$. We write $\hat{S}^m\Omega_X^1$ for the reflexive hull $(S^m\tau_*\Omega_Y^1)^{\vee\vee}$.

\subsection{Explicit methods}
Using graded modules over the coordinate ring of $X$, we explain how to  \emph{explicitly compute} a basis for the vector space $\HH^0(X,\hat{S}^m\Omega^1_X)$.  Current technology suffices to execute these ideas for small values of $m$. Using the identifications
\[
\HH^0(X,\hat{S}^m\Omega^1_X) \simeq \HH^0(X- S,S^m\Omega^1_X)\simeq \HH^0(Y- E,S^m\Omega^1_Y) 
\]
we study subspaces of sections of $\HH^0(X,\hat{S}^m\Omega^1_X)$ that can be extended to (at least part of) the exceptional divisor $E$ on $Y$.  See \S\ref{S:comp_reg_diff} for details.

As mentioned already, a section $\omega\in \HH^0(Y,S^m\Omega^1_Y)$ yields a degree $m$ first order differential equation that any genus $0$ curve $C\subset Y$ must satisfy, reflecting that $\omega$ must pull back to $0$ on $C$.  Two linearly independent sections $\omega_1$ and $\omega_2$ force $C$ to \emph{simultaneously} satisfy two differential equations. Whether this is possible can be expressed in terms of the vanishing of a resultant variety $\res(\omega_1,\omega_2)$; a precise definition of this variety is given in \S\ref{s:resultants}. If this resultant does not vanish identically, it furnishes a closed proper sublocus of $Y$ to which $C$ must belong.  An analysis of the irreducible components of $\res(\omega_1,\omega_2)$ then yields a complete list of genus $0$ curves on $Y$.

In concrete examples, it is possible that $\HH^0(Y,S^m\Omega^1_Y) = 0$ for small values of $m$.  Not all is lost.  Already two linearly independent sections $\omega_1$, $\omega_2\in \HH^0(X,\hat{S}^m\Omega^1_X)$ give rise to a closed sublocus $\res(\omega_1,\omega_2) \subset X$; if this closed set is strictly contained in $X$, then among its irreducible components one finds all \emph{complete} curves $C \subset X$ of genus $0$ that do not pass through any of the nodes of $X$.  

Crucially, intermediate subspaces of $\HH^0(Y,S^m\Omega^1_Y) \subset \HH^0(X,\hat{S}^m\Omega^1_X)$ can at once strengthen the above claims on genus $0$ curves on $X$ (or $Y$), as well as give constraints on the locus of genus $1$ curves on $X$.  For example, we show that if $X \subseteq \PP^3$ is a surface with $\ell$ isolated $A_1$ singularities, and if $H \subset \PP^3$ is a plane, then two linearly independent elements $\omega_1$ and $\omega_2$ in the intermediate subspace 
\[
\HH^0(Y,S^2\Omega^1_Y) \subset \HH^0\left(X - S,(S^2\Omega^1_X)(-H)\right) \subset \HH^0(X,\hat{S}^2\Omega^1_X)
\]
can be extended to regular differentials over the nodes $S\cap H$ (see Corollary~\ref{C:vanishing_on_hyperplane}), and thus their pullbacks $\tau^*\omega_1$ and $\tau^*\omega_2$ extend to regular differentials on the part of exceptional component $E$ lying over $S\cap H$. Let $S' = S - H$; we show that any complete genus $0$ curve $C\subset X - S'$ must lie in the closed proper subset $\res(\omega_1,\omega_2) \subset X$. In particular examples, we explicitly determine this locus and decompose it into irreducible components. Any genus $0$ curve on $X$ that passes only through nodes in $X\cap H$ must be among these components.

By varying the plane $H$ across the set of planes in $\PP^3$ spanned by any three $A_1$ singularities of $X$ we may conclude that any curve $C\subset X$ of genus $0$ must be among the curves we encountered or must pass through at least four non-coplanar nodes of $X$.  In addition, for any complete curve $C \subset X - S$ of genus $1$, the pull-back of the differentials $\omega_1$ and $\omega_2$ to $C$ each vanish on $C\cap H$, and thus they must both be identically zero on $C$. This forces the curve to be contained in the closed proper sublocus $\res(\omega_1,\omega_2)$.  If, on the other hand, the intersection $C\cap S$ is not empty, then $C$ must be contained in a linear space whose dimension is one more than that of the span of $C\cap S$ (see Corollary~\ref{C:explicit_resultant_locus}), and this forces $C$ to pass through at least two singularities in $C\cap S$.

As an example of this circle of ideas, we prove restrictions on the locus of genus $0$ or $1$ curves on Barth's sextic surface $X_6 \subset \PP^3_{\Q(\sqrt{5})}$, a surface whose singular locus consists of $65$ isolated $A_1$ singularities.

\begin{theorem}
\label{thm:BarthIntro}
Let $\phi=\frac{1}{2}(\sqrt{5}+1)$, and let $X_6 \subset \PP^3_{\Q(\sqrt{5})}$ be Barth's sextic surface, defined by
\[
X_6 : \quad 4(\phi^2x^2-y^2)(\phi^2y^2-z^2)(\phi^2z^2-x^2)-(1+2\phi)(x^2+y^2+z^2-w^2)=0.
\]
Any genus $0$ curve on $X_6$ must pass through at least four singularities.  Furthermore, there are exactly 27 genus $0$ curves on $X_6$ lying on planes spanned by singularities (they are listed in~\S\ref{ss:BarthSextic}); any genus $0$ curve on $X_6$ not among these $27$ curves must pass through at least four singularities that span $\PP^3$. Any genus $1$ must pass through at least two singularities and lie in a plane or pass through at least three non-collinear singularities.
\end{theorem}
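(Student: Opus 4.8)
The plan is to run, for $m=2$, the program of \S\ref{ss:set-up}. \emph{Step 1 (symmetric differentials).} Using the description of $\hat S^m\Omega^1_X$ in terms of graded modules over the homogeneous coordinate ring $\Q(\sqrt5)[x,y,z,w]/(f)$ given in \S\ref{S:comp_reg_diff}, I would first compute a basis of $\HH^0(X_6,\hat S^2\Omega^1_{X_6})$; Barth's sextic is chosen precisely because this space is large. For each plane $H\subset\PP^3$ spanned by three of the $65$ nodes I would then cut out, inside the chain
\[
\HH^0(Y,S^2\Omega^1_Y)\ \subset\ \HH^0\!\left(X_6-S,\,(S^2\Omega^1_{X_6})(-H)\right)\ \subset\ \HH^0(X_6,\hat S^2\Omega^1_{X_6}),
\]
the intermediate space $\HH^0(X_6-S,(S^2\Omega^1_{X_6})(-H))$ and pick two linearly independent sections $\omega_1,\omega_2$ there; by Corollary~\ref{C:vanishing_on_hyperplane} these extend to regular differentials over the nodes of $S\cap H$, so $\tau^*\omega_1,\tau^*\omega_2$ extend over the corresponding exceptional components of $E$.

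\emph{Step 2 (resultant locus for a fixed $H$).} I would form the resultant variety $\res(\omega_1,\omega_2)\subset X_6$ of \S\ref{s:resultants} and check — the computational heart of the proof — that it is a \emph{proper} closed subset of $X_6$, then compute its primary decomposition. Writing $S'=S-H$, this gives: every complete genus $0$ curve $C\subset X_6-S'$ lies in $\res(\omega_1,\omega_2)$, hence is one of the finitely many one-dimensional components there of geometric genus $0$. For a complete genus $1$ curve $C\subset X_6-S$: since the $(-H)$-twist makes $\tau^*\omega_i$ vanish along $C\cap H\ne\varnothing$, and a nonzero symmetric differential on a genus $1$ curve is nowhere zero, both $\tau^*\omega_i$ vanish identically on $C$, so again $C\subset\res(\omega_1,\omega_2)$; one checks no component of these resultant loci has geometric genus $1$, so in fact $C\cap S\ne\varnothing$ for every genus $1$ curve $C$, and then Corollary~\ref{C:explicit_resultant_locus} forces $C$ into a linear subspace of dimension $\dim\langle C\cap S\rangle+1$. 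As $C$ is a curve and is not a line, $\dim\langle C\cap S\rangle\ge1$, so $C$ lies in a plane through at least two nodes, or $\dim\langle C\cap S\rangle\ge2$, so $C$ passes through at least three non-collinear nodes.

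\emph{Step 3 (global conclusion).} I would let $H$ range over all planes spanned by three nodes. The icosahedral symmetry group acting on $X_6$ collapses these into a small number of orbits, keeping the computation finite. If a genus $0$ curve $C$ does not pass through four nodes spanning $\PP^3$, then $C\cap S$ is degenerate and is contained in a plane $H$ spanned by three of the nodes (any set of at most three nodes lies on such a plane, since a line contains only finitely many of the $65$); hence $C\cap S'=\varnothing$, so $C\subset\res(\omega_1,\omega_2)$ for that $H$ and $C$ is one of the finitely many genus $0$ components. Taking the union of these genus $0$ components over all orbits of $H$, deleting duplicates and components that are not irreducible genus $0$ curves, I expect to obtain the list of $27$ curves in \S\ref{ss:BarthSextic}; one then verifies each of the $27$ actually passes through at least four nodes. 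Combined with Step 2, this proves all assertions: every genus $0$ curve meets at least four nodes, those not on the list meet four that span $\PP^3$, the $27$ are exactly the genus $0$ curves lying on planes spanned by nodes, and every genus $1$ curve meets at least two nodes and lies in a plane or meets three non-collinear nodes.

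The principal obstacle is the scale and fragility of the computation over $\Q(\sqrt5)$: producing $\HH^0(X_6,\hat S^2\Omega^1_{X_6})$, extracting each intermediate subspace, and — most sensitively — certifying for \emph{every} relevant plane $H$ that $\res(\omega_1,\omega_2)\ne X_6$ and decomposing it, since one degenerate plane would collapse the argument. A secondary difficulty is the combinatorial bookkeeping: enumerating the symmetry orbits of node-triples so that no plane carrying a genus $0$ curve through four coplanar nodes is missed, and correctly merging the component curves found for different planes.
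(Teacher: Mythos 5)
Your proposal follows essentially the same route as the paper: compute $\HH^0(X_6,\hat S^2\Omega^1_{X_6})$ via graded modules, twist by a hyperplane so that Corollary~\ref{C:vanishing_on_hyperplane} extends sections over coplanar nodes, form resultant loci, decompose them, and reduce the remaining cases to planes spanned by singularities. Two execution points where the paper diverges are worth knowing: the module computation is only feasible over a finite field followed by rational reconstruction to $\Q(\sqrt{5})$ (your ``principal obstacle'' is real, and this is how it is overcome), and the paper computes the resultant locus \emph{once}, for a single $H$, using all three sections of the $3$-dimensional space $\HH^0(X_6,\hat S^2\Omega^1(-H))$ --- the variation over planes spanned by nodes is absorbed by the linear-equivalence argument inside Corollary~\ref{C:explicit_resultant_locus}, so other planes contribute only components of their own hyperplane sections rather than requiring a fresh resultant computation per orbit. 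One concrete caution: your intermediate claim that ``no component of these resultant loci has geometric genus $1$'' is too strong --- the paper finds genus $1$ components (for instance in the loci $\det(A')=0$, including non-planar ones); the argument still closes because each such component passes through sufficiently many singularities, but this must be verified component by component rather than ruled out a priori.
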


We also obtain restrictions on the locus of genus $0$ and $1$ curves on the surface $\Xpc$ parametrizing \emph{perfect cuboids}, i.e., cuboids with all sides $x_1$, $x_2$, $x_3$, diagonals $y_1$, $y_2$, $y_3$ and body diagonal $z$ rational.  It is a complete intersection in $\PP^6$ of four quadrics: 
\begin{multicols}{2}
\begin{flushright}
\begin{tikzpicture}
\coordinate (A) at (1.7,1.4,1.3) ;
\coordinate (B) at (1.6,1.3,-1.3) ;
\coordinate (C) at (-1.7,1.4,1.3) ;
\coordinate (D) at (1.7,-1.4,1.3) ;
\coordinate (E) at (-1.6,1.3,-1.3) ;
\coordinate (F) at (-1.7,-1.4,1.3) ;
\coordinate (G) at (1.6,-1.4,-1.3) ;
\coordinate (H) at (-1.6,-1.4,-1.3) ;
\begin{scope}[every node/.style={midway}]
\draw (D) -- node[right=2pt] {$x_3$} (G) -- (B)  -- (E) -- (C)
   -- (F) -- node[above] {$x_1$} (D) ;
\draw (A) -- node[above=3pt, left] {$x_2$} (D) ;
\draw (B)  -- (A) -- (C) ;
\draw[dashed] (G) -- (H) ;
\draw[dashed] (E) -- (H) -- (F) ;
\draw[blue] (B) -- node[above=5pt, left] {$y_2$} (C)
   -- node[above] {$y_3$} (D) -- node[below=2pt, right=-2pt] {$y_1$} (B) ;
\draw[dashed, thick, red] (E) -- node[above right] {$z$} (D) ;
\end{scope}
\end{tikzpicture}
\end{flushright}

\begin{flushleft}
{
\[\Xpc : \quad
\left\{
\begin{split}
y_1^2&=x_2^2+x_3^2,\\
y_2^2&=x_3^2+x_1^2,\\
y_3^2&=x_1^2+x_2^2,\\
z^2&=x_1^2+x_2^2+x_3^2.
\end{split}\right.
\]
}

\end{flushleft}
\end{multicols}
\noindent The surface $\Xpc$ has 48 singularities of type $A_1$; it contains 32 plane conic curves and 60 genus $1$ curves identified by van Luijk in~\cite{vanLuijk2000} (see \S\ref{s:QuadricCIs}). We prove:

\begin{theorem}
\label{thm:CuboidIntro}
Let $\Xpc$ be the perfect cuboid surface. Any genus $0$ curve on $\Xpc$ must pass through at least six distinct singularities, Any genus $0$ curve on $\Xpc$ other than van Luijk's $32$ plane conics must pass through at least seven singularities that span $\PP^6$.

Any genus $1$ curve on $\Xpc$ lies in a linear space of at most one dimension higher than the linear space spanned by the singularities it passes through. In particular, any genus $1$ curve on $\Xpc$ passes through at least two singularities and is a component of a hyperplane section or passes through at least six singularities spanning a hyperplane.
\end{theorem}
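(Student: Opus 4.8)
The plan is to transplant to $\PP^6$ the apparatus \S\ref{ss:set-up} sets up for Barth's sextic in $\PP^3$, replacing ``planes spanned by three nodes'' with ``hyperplanes $H\cong\PP^5$ spanned by six of the $48$ nodes of $\Xpc$''. First I would compute a basis of $\HH^0(\Xpc,\hat{S}^2\Omega^1_{\Xpc})$ by the graded-module procedure of \S\ref{S:comp_reg_diff} applied to the four quadrics defining $\Xpc$, and bound its dimension from below using our new estimate for resolutions of complete-intersection surfaces with $A_1$ points; with $48$ nodes this space is large. For a hyperplane $H$ I then single out the intermediate subspace $W_H=\HH^0\bigl(\Xpc - S,\,(S^2\Omega^1_{\Xpc})(-H)\bigr)$ of symmetric differentials vanishing on $\Xpc\cap H$; by Corollary~\ref{C:vanishing_on_hyperplane} their pullbacks extend over the exceptional components above the nodes in $S\cap H$. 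From the restriction sequence relating $S^2\Omega^1_{\Xpc}$, its twist by $\OO_{\Xpc}(-1)$, and its restriction to the curve $\Xpc\cap H$, together with the dimension bound, one checks that $\dim W_H\ge 2$ whenever $H$ is spanned by nodes.

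For the genus $0$ statement, fix linearly independent $\omega_1,\omega_2\in W_H$. If the resultant variety $\res(\omega_1,\omega_2)$ of \S\ref{s:resultants} is a proper closed subset of $\Xpc$, then every complete genus $0$ curve $C$ with $C\cap S\subseteq H$ lies in it; I would compute this variety and decompose it into irreducible components, checking that the only genus $0$ curves among them are those of van Luijk's $32$ conics that lie in $H$. The group generated by the coordinate sign changes and by the $S_3$ permuting the three axes acts on $\Xpc$ and on the set of node-spanned hyperplanes with only finitely many orbits, so only a handful of such computations is needed. Now a genus $0$ curve $C$ meeting at most six nodes has those nodes spanning a linear space of dimension at most $5$, which I would enlarge to a node-spanned hyperplane $H$ (possible because of the way the $48$ nodes sit in $\PP^6$, a property confirmed by a finite check on the node configuration); then $C\cap S\subseteq H$ forces $C$ to be one of van Luijk's conics. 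Hence a genus $0$ curve not among those $32$ conics meets at least seven nodes, and if those nodes spanned merely a hyperplane the same argument would again confine $C$ to a hyperplane section, a contradiction; so they span $\PP^6$. A direct computation with van Luijk's explicit conics finally shows each passes through at least six nodes, giving the uniform bound.

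The genus $1$ statement uses in addition that a nonzero symmetric $2$-differential on a smooth genus $1$ curve $C$ vanishes nowhere, since $S^2\Omega^1_C\cong\OO_C$. Thus if $C\cap S\subseteq H$ and $C\not\subseteq H$, the pullbacks $\tau^*\omega_1,\tau^*\omega_2$ restrict to sections of $S^2\Omega^1_C$ vanishing on the nonempty effective divisor $C\cap H$, hence vanish identically, so $C\subseteq\res(\omega_1,\omega_2)$, which the decomposition above rules out; therefore $C\subseteq H$ and $C$ is a component of the hyperplane section $\Xpc\cap H$. This is the content of Corollary~\ref{C:explicit_resultant_locus}: a genus $1$ curve lies in a linear space of dimension at most one more than the span of the nodes it contains. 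The remaining clauses follow by elementary bookkeeping on that span: a genus $1$ curve through no node falls into the proper resultant locus and through exactly one node would be a line, so it meets at least two nodes; through at most five nodes it spans at most a $\PP^4$ and so lies in a hyperplane; and otherwise it is either a component of a hyperplane section or meets six nodes spanning a hyperplane.

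I expect the main obstacle to be computational rather than conceptual: producing $\HH^0(\Xpc,\hat{S}^2\Omega^1_{\Xpc})$ and, above all, the resultant varieties $\res(\omega_1,\omega_2)$ with their irreducible decompositions over $\PP^6$, for each orbit of node-spanned hyperplanes, is a sizable Gr\"obner-basis effort, and one must also ensure that the chosen $\omega_1,\omega_2$ have non-vanishing resultant in every case. A secondary obstacle is the combinatorial geometry of the $48$-node configuration: one must confirm that every subset of at most six nodes extends to a hyperplane that remains spanned by nodes, so that the resultant argument applies uniformly and no low-genus curve slips through by passing through ``too few'' nodes whose span lies in no node-spanned hyperplane.
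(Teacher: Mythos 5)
Your overall architecture mirrors the paper's treatment of Barth's sextic, but it breaks at the one step you dismiss with ``one checks'': the claim that $\dim W_H\ge 2$ for $W_H=\HH^0\bigl(\Xpc-S,(S^2\Omega^1_{\Xpc})(-H)\bigr)$. The paper's explicit computation gives $h^0(\Xpc,\hat{S}^2\Omega^1_{\Xpc})=13$ but $h^0\bigl(\Xpc,(\hat{S}^2\Omega^1_{\Xpc})(-H)\bigr)=1$, spanned by the single form $\omega_7$ of Table~\ref{tbl:sym2gens}; since by Lemma~\ref{L:compute_sections}(d) this dimension is the degree $-1$ graded piece of $(S^2M_X)^{\vee\vee}$, it is the same for \emph{every} hyperplane. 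So there is no hyperplane $H$ admitting two linearly independent sections vanishing along it, and the resultant variety $\res(\omega_1,\omega_2)$ on which your genus-$0$ and genus-$1$ arguments both rest cannot be formed. This is precisely where $\Xpc$ differs from $X_6$, where the corresponding space is $3$-dimensional and the resultant method does go through.

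The paper's actual route replaces the missing second section by an explicit description of the foliation cut out by the single available one: $\omega_7=\phi^*(\eta)/(y_1y_2y_3z^2)$ for the degree-$16$ multiquadratic cover $\phi\colon \Xpc\to\PP^2$, $(x_1:x_2:x_3)$, where $\eta$ is an explicit degree-$2$ form on $\PP^2$ whose integral curves are classified in Lemma~\ref{L:integral_curve_classification_on_P2} (the conic $x_1^2+x_2^2+x_3^2=0$ and its tangent lines). A low-genus curve $L$ whose nodes lie in a hyperplane $H$ kills $h\omega_7$, hence either lies in $H$ (and is found by the finite enumeration over node-spanned hyperplanes in Lemma~\ref{L:cuboid_hyperplanecurves}) or maps under $\phi$ to an integral curve of $\eta$; a Riemann--Hurwitz analysis of the tower $\Xpc\to X_y,X_z\to\PP^2$ then shows that components over tangent lines are generically canonical genus-$5$ curves, which eliminates them outside the known list. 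None of this appears in your proposal, and without it (or some genuinely new source of a second independent section in $W_H$) your argument does not close. Your auxiliary remark about computing $\res(\omega_1,y_1\omega_7)$ reproduces only the paper's preliminary ``at least two nodes'' observation, not the stated six/seven bounds.
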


\begin{remark} Garc\'ia-Fritz and Urz\'ua~\cite{GarciaFritzUrzua} study a natural composition of cyclic covers $\Xpc \to X_0 := \PP^1\times \PP^1$.  By investigating integral curves for the pullback of a section $\omega \in \HH^0(X_0,(S^2\Omega^1_{X_0})(2,2))$, they show that every curve of genus $0$ or $1$ on $\Xpc$ ust pass through at least $2$ nodes. Theorem~\ref{thm:CuboidIntro} gives stronger results vis-\`a-vis genus $0$ curves, but similar results for genus $1$ curves.
It would be interesting to see if their approach can be combined with our methods to produce even stronger results. 
\end{remark}

\subsection{Dimensions of spaces of symmetric differentials}
Keep the notation of~\S\ref{ss:set-up}.  We obtain further results on quasi-hyperbolicity of surfaces from another angle, by leveraging a pair of computable lower bounds for the dimensions $h^0(Y,S^m\Omega^1_Y)$ in terms of the Chern classes $c_1(Y)^2$ and $c_2(Y)$, as well as $\ell$, and $m$.  The general result is best phrased using the constituent terms $\chi^0$ and $\chi^1$ of Wahl's local Euler characteristic (see~\cite{Wahl1993} and \S\ref{S:local_euler_char}).  For a singularity $s \in X$, we denote by $X^\circ$ an analytic neighborhood of $s$, and let $Y^\circ$ be its inverse image under $\tau$; we write $E_s$ for the component of $E$ in $Y^\circ$ above $s$. The quantity $\chi^0(s,S^m\Omega^1_{Y})$ is the codimension of the subspace of $\HH^0(Y^\circ-E_s,S^m\Omega^1_{Y^\circ}) \isom \HH^0(X^\circ,\hat S^m\Omega^1_{X^\circ})$ of sections that extend over $s$; the quantity $\chi^1(s,S^m\Omega^1_{Y})$ is the dimension of $\HH^1(Y^\circ,S^m\Omega^1_{Y^\circ})$ around a node. 

\begin{theorem}
\label{thm:main}
Let $X$ be an irreducible complex projective surface whose singular locus $S$ is a finite set of isolated du Val singularities. Let $\tau\colon Y \to X$ be a minimal resolution.
Then for $m\geq 3$, we have
\begin{equation}\label{E:h0Yestimate_chi}
h^0(Y,S^m\Omega^1_Y)\geq \chi(Y,S^m\Omega^1_Y)+\sum_{s\in S}\chi^1(s,S^m\Omega^1_{Y}).
\end{equation}
Moreover, the inequality
\begin{equation}\label{E:h0Yestimate_h0}
h^0(Y,S^m\Omega^1_Y)\geq h^0(X,\hat S^m\Omega^1_X) -\sum_{s\in S}\chi^0(s,S^m\Omega^1_{Y})
\end{equation}
holds for $m \geq 1$.
\end{theorem}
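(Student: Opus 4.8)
The plan is to prove~\eqref{E:h0Yestimate_h0} first — it is essentially formal — and then to deduce~\eqref{E:h0Yestimate_chi} from it together with a single cohomological vanishing statement. Throughout write $\calF=S^m\Omega^1_Y$. For~\eqref{E:h0Yestimate_h0}: under the identification $\HH^0(X,\hat S^m\Omega^1_X)\isom\HH^0(Y-E,\calF)$, the subspace $\HH^0(Y,\calF)$ consists precisely of those sections that extend regularly across $E$, and such an extension exists if and only if it exists across each connected component $E_s$, $s\in S$, separately. By the very definition of $\chi^0$, the obstruction to extending across $E_s$ lies in a space of dimension $\chi^0(s,\calF)$; since the $E_s$ are pairwise disjoint, $\HH^0(Y,\calF)$ has codimension at most $\sum_{s\in S}\chi^0(s,\calF)$ inside $\HH^0(X,\hat S^m\Omega^1_X)$, which is~\eqref{E:h0Yestimate_h0} (valid for every $m\geq 1$).

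For~\eqref{E:h0Yestimate_chi} I would first record two Euler-characteristic identities. Because the fibres of $\tau$ have dimension at most $1$ we have $R^2\tau_*\calF=0$, while $R^1\tau_*\calF$ is supported on $S$, has $\HH^0(X,R^1\tau_*\calF)$ of dimension $\sum_s\chi^1(s,\calF)$ and no higher cohomology; the Leray spectral sequence for $\tau$ then yields $\chi(Y,\calF)=\chi(X,\tau_*\calF)-\sum_s\chi^1(s,\calF)$. Also $\tau_*\calF\subseteq\hat S^m\Omega^1_X$ with torsion cokernel supported on $S$ and of length $\sum_s\chi^0(s,\calF)$ — this is the definition of $\chi^0$, using that small neighbourhoods of points of $S$ are Stein — so $\chi(X,\hat S^m\Omega^1_X)=\chi(X,\tau_*\calF)+\sum_s\chi^0(s,\calF)=\chi(Y,\calF)+\sum_s\chi^1(s,\calF)+\sum_s\chi^0(s,\calF)$. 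Feeding this into~\eqref{E:h0Yestimate_h0}, the estimate~\eqref{E:h0Yestimate_chi} follows as soon as $h^0(X,\hat S^m\Omega^1_X)\geq\chi(X,\hat S^m\Omega^1_X)$, and for that it suffices to prove the vanishing $\HH^2(X,\hat S^m\Omega^1_X)=0$ for $m\geq 3$.

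To prove this vanishing: $X$ is Gorenstein (du Val singularities) with invertible dualizing sheaf $\omega_X$, so Serre duality gives $\HH^2(X,\hat S^m\Omega^1_X)^\vee\isom\Hom_{\calO_X}(\hat S^m\Omega^1_X,\omega_X)$. Since $\hat S^m\Omega^1_X$ is torsion free and $\omega_X$ is a line bundle, $\sHom(\hat S^m\Omega^1_X,\omega_X)$ is reflexive, so this $\Hom$ is computed on the smooth locus $U=X- S\isom Y-E$; using $T_U\isom\Omega^1_U\otimes\omega_U^{-1}$ and $\omega_Y=\tau^*\omega_X$ this becomes
\[
\Hom_{\calO_X}(\hat S^m\Omega^1_X,\omega_X)\isom\HH^0\bigl(U,\,S^mT_U\otimes\omega_U\bigr)\isom\HH^0\bigl(Y-E,\ S^m\Omega^1_Y\otimes\omega_Y^{\otimes(1-m)}\bigr).
\]
Now fix a very ample $A$ on $X$; then $\tau^*A$ is base-point free and has degree $0$ on every component of $E$, so a general $C\in|\tau^*A|$ is a smooth curve disjoint from $E$, and these curves sweep out $Y$. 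A nonzero section over $Y-E$ would restrict to a nonzero section of $\bigl(S^m\Omega^1_Y\otimes\omega_Y^{\otimes(1-m)}\bigr)|_C$ for general such $C$; equivalently $S^m\Omega^1_Y|_C$ would contain a sub-line bundle of degree at least $(m-1)(K_Y\cdot C)$. But $\mu_{\max}(S^m\Omega^1_Y|_C)\leq m\,\mu_{\max}(\Omega^1_Y|_C)$, and by a restriction theorem together with the $\mu$-semistability of $\Omega^1_Y$ — this is where positivity of $K_Y$ (big and nef) is used — one has $\mu_{\max}(\Omega^1_Y|_C)=\tfrac12(K_Y\cdot C)$ for general $C$, so $\mu_{\max}(S^m\Omega^1_Y|_C)\leq\tfrac m2(K_Y\cdot C)<(m-1)(K_Y\cdot C)$ as soon as $m\geq 3$ (note $K_Y\cdot C=K_X\cdot A>0$), a contradiction.

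I expect the last step to be the main obstacle: bounding $\mu_{\max}(\Omega^1_Y|_C)$ \emph{from above}, i.e.\ establishing the $\mu$-semistability of $\Omega^1_Y$. Miyaoka's generic semipositivity controls only the minimal slope, so genuine semistability has to be argued separately; for the complete-intersection surfaces in the applications this is classical (restriction of semistable bundles from $\PP^n$), but it is the crux of the estimate. It is also precisely why the threshold is $m\geq 3$: for $m\leq 2$ the numerical inequality $\tfrac m2<m-1$ fails, and indeed $\hat S^2\Omega^1_X$ can genuinely have $\HH^2\neq 0$. A secondary technical point is that $\tau^*A$ is only nef and big, so one must either perturb it slightly or use a version of the restriction theorem compatible with keeping $C$ disjoint from $E$.
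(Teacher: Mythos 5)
Your formal skeleton is correct, and it tracks the paper's proof more closely than you might expect. The argument you give for \eqref{E:h0Yestimate_h0} is exactly the paper's: the extension conditions at the disjoint components $E_s$ each cut out a subspace of codimension at most $\chi^0(s,S^m\Omega^1_Y)$. For \eqref{E:h0Yestimate_chi}, your route --- combine \eqref{E:h0Yestimate_h0} with the identity $\chi(X,\hat S^m\Omega^1_X)=\chi(Y,S^m\Omega^1_Y)+\sum_s\chi^0(s,S^m\Omega^1_Y)+\sum_s\chi^1(s,S^m\Omega^1_Y)$ (this is the lemma of Blache that the paper invokes later, in the proof of Proposition~\ref{P:local_chi}) and the vanishing $\HH^2(X,\hat S^m\Omega^1_X)=0$ --- is a legitimate variant of the paper's, which instead runs the Leray six-term sequence on $Y$ (Lemma~\ref{lem:Leray}, Corollary~\ref{cor:MainLowerBound}) and discards $h^1(X,\tau_*S^m\Omega^1_Y)\geq 0$ where you discard $h^1(X,\hat S^m\Omega^1_X)\geq 0$; the two give the same bound and hinge on the same key vanishing, which the paper does not reprove but cites from \cite{BO}*{Proposition~2.3} and \cite{Deschamps}*{Lemme~3.3.2}. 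You are also right that general type of $Y$ (the standing hypothesis $d_1+\cdots+d_{n-2}>n+1$ of \S\ref{ss:set-up}, via Lemma~\ref{L:Y_general_type}) is indispensable here even though it is not in the literal statement of the theorem.

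The genuine gap is exactly where you suspect it: the bound $\mu_{\max}(\Omega^1_Y|_C)=\tfrac12(K_Y\cdot C)$, i.e.\ $\mu$-semistability of $\Omega^1_Y$. This is not merely unproved in your write-up; it is false for minimal surfaces of general type in general. For $Y=C_1\times C_2$ with $2\leq g(C_1)<g(C_2)$ one has $\Omega^1_Y=p_1^*\omega_{C_1}\oplus p_2^*\omega_{C_2}$, and for most polarizations one summand has slope strictly greater than $\tfrac12 K_Y\cdot H$ --- yet $\HH^0(Y,S^mT_Y\otimes K_Y)=0$ for $m\geq 3$ still holds there, as one checks directly on the summands. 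So semistability is strictly stronger than what is true, and it is not supplied by restriction from $\PP^n$ either: $\Omega^1_X$ is a \emph{quotient} of $\Omega^1_{\PP^n}|_X$, and quotients of semistable bundles need not be semistable (stability of the cotangent bundle of complete intersections is a nontrivial theorem in its own right, and says nothing about the resolution $Y$ of a singular $X$). The statement that is both true and sufficient is Bogomolov's: on a surface of general type an invertible subsheaf of $\Omega^1_Y$ is never big, equivalently $h^0(Y,S^m\Omega^1_Y\otimes K_Y^{-p})=0$ whenever $m<2p$; applied with $p=m-1$ this is precisely the dual vanishing you need for $m\geq 3$, and it is exactly the content of the results the paper cites. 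Until the semistability step is replaced by this (or the vanishing is simply quoted, as the paper does), your deduction of \eqref{E:h0Yestimate_chi} is incomplete.
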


Conceptually, the lower bound~\eqref{E:h0Yestimate_h0} records that the conditions imposed by individual singularities to extend a section of $\HH^0(X,\hat S^m\Omega^1_X)$ to one in $\HH^0(Y,S^m\Omega^1_Y)$ are at worst linearly independent.

The term $\chi(Y,S^m\Omega^1_Y)$ in the right hand side of~\eqref{E:h0Yestimate_chi} can be computed by combining a result of Atiyah~\cite{Atiyah1958} (see Lemma~\ref{L:atiyah}) with~\eqref{E:chiYSm}, which is a standard Chern class computation included in the Appendix. We get
\[
\chi(Y,S^m\Omega^1_Y)=\tfrac{1}{12}\big(
2(K^2-\chi)m^3-6\chi m^2-(K^2+3\chi)m+K^2+\chi
\big),
\]
where $K^2 = c_1(Y)$ and $\chi=c_2(Y)$.

In Propositions~\ref{P:codim_E_regular} and~\ref{P:local_chi}, we determine \emph{exact} expressions for $\chi^0(s,S^m\Omega^1_{Y})$ and $\chi^1(s,S^m\Omega^1_{Y})$ in the case where $s$ is an $A_1$ singularity:
\begin{align*}
\chi^0(s,S^m\Omega^1_{Y}) &= 
\begin{cases}
\frac{11}{108}m^3 + \frac{11}{36}m^2 + \frac{1}{6}m & m \equiv 0 \pmod 6 \\
\frac{11}{108}m^3 + \frac{11}{36}m^2 - \frac{1}{12}m - \frac{35}{108} & m \equiv 1 \pmod 6 \\
\frac{11}{108}m^3 + \frac{11}{36}m^2 + \frac{7}{18}m + \frac{5}{27} & m \equiv 2 \pmod 6 \\
\frac{11}{108}m^3 + \frac{11}{36}m^2 - \frac{1}{12}m - \frac{1}{4} & m \equiv 3 \pmod 6 \\
\frac{11}{108}m^3 + \frac{11}{36}m^2 + \frac{1}{6}m - \frac{2}{27} & m \equiv 4 \pmod 6 \\
\frac{11}{108}m^3 + \frac{11}{36}m^2 + \frac{5}{36}m - \frac{7}{108} & m \equiv 5 \pmod 6 
\end{cases},
\end{align*}
\begin{align*}
\chi^1(s,S^m\Omega^1_{Y}) &=
\begin{cases}
{\frac {4}{27}}\,{m}^{3}+\frac{4}{9}{m}^{2}+\frac{1}{3}m & \quad\  m\equiv 0\pmod{3}\\
{\frac {4}{27}}\,{m}^{3}+\frac{4}{9}{m}^{2}+\frac{1}{3}m+{\frac{2}{27}}& \quad\ m\equiv 1\pmod{3}\\
{\frac {4}{27}}\,{m}^{3}+\frac{4}{9}{m}^{2}+\frac{1}{9}m-{\frac{5}{27}}& \quad\ m\equiv 2\pmod{3}
\end{cases}.
\end{align*} 

Putting this all together, we obtain several algebraic quasi-hyperbolicity results, illustrated by the following examples.

\subsubsection{Nodal Surfaces in $\PP^3$}
If $X$ is a surface of degree $d$ in $\PP^3$, with 
\begin{equation}
\label{eq:P3lowerbound}
\ell > \frac{9}{4}(2d^2 - 5d)
\end{equation}
$A_1$ singularities, then the lower bound~\eqref{E:h0Yestimate_chi} shows that $h^0(Y,S^m\Omega^1_Y) > 0$ for all $m \gg 0$ and grows cubically with $m$, i.e., $Y$ has a big cotangent bundle.  On the other hand, Miyaoka's bounds~\cite[\S2.3]{MiyaokaBounds} on the number of quotient singularities on surfaces imply the inequality 
\[
\ell \leq \frac{4}{9}d(d-1)^2.
\]
Thus, $d = 10$ is the smallest degree $\geq 5$ for which there can exist a surface with sufficiently many nodes on which our results apply. Happily, such surfaces do exist: Barth's decic surface~\cite{Barth1996}, for which $(d,\ell) = (10,345)$, has a minimal resolution with a big cotangent bundle and is thus algebraically quasi-hyperbolic.  In this case, we can guarantee that $h^0(Y,S^m\Omega^1_Y) > 0$ once $m\geq 160$; in fact, we show that $h^0(Y,S^{160}\Omega^1_Y) \geq 15755$.  Sarti's surface~\cite{Sarti2001} satisfies $(d,\ell) = (12,600)$, and is also algebraically quasi-hyperbolic; in this case, we can guarantee that  $h^0(Y,S^m\Omega^1_Y) > 0$ once $m\geq 28$, and that $h^0(Y,S^{28}\Omega^1_Y)\geq 7646$.

We note that the results in~\cite{RoulleauRousseau}, as well a corrected version of those in~\cite{BO} do not suffice to prove algebraic quasi-hyperbolicity of Barth's Decic or Sarti's surface. Indeed, the orbifold methods in~\cite{RoulleauRousseau} yield the slightly weaker lower bound $\ell > \frac{8}{3} (2d^2-5d)$ in place of~\eqref{eq:P3lowerbound}, while a corrected version of the calculations in~\cite{BO} yield the lower bound $\ell > \frac{36}{11}(2d^2-5d)$ in place of~\eqref{eq:P3lowerbound}. See Remark~\ref{rem:BORR} for a more thorough comparison of results.

Segre~\cite{Segre1947} constructed hypersurfaces of even degree $d$ in $\PP^3$ with $\ell=\frac{1}{4}d^2(d-1)$ nodal singularities by taking an equation of the form
\[G^2+\lambda\prod_{i=1}^dL_i,\]
where $G$ is a form of degree $d/2$ and $L_i$ are linear forms, and $\lambda$ is a scalar (see also~\cite{Beauville1980}*{p.~208}). For $d\geq 18$, this satisfies the bound~\eqref{eq:P3lowerbound}.

\subsubsection{Nodal Complete Intersections of Quadrics}
\label{ex:CIsIntro}

If $X$ is a complete intersection of $n-2$ quadrics in $\PP^n$ with $\ell$ isolated $A_1$ singularities, then we use the lower bound~\eqref{E:h0Yestimate_chi} to show that the resolution $Y$ has big cotangent bundle and is algebraically quasi-hyperbolic for $\ell \geq \ell_{\mathrm{min}}(n)$, where $\ell_{\mathrm{min}}(n)$ is defined by
\[
\begin{array}{c||c|c|c|c|c}
n&6&7&8&9&\geq 10\\
\hline
\ell_\mathrm{min}(n)&73&145&217&145&0\\
\end{array}\]
The fact that a $2$-dimensional complete intersection of quadrics with isolated du Val singularities in a sufficiently high-dimensional projective space has big cotangent bundle follows already from work of Roulleau and Rousseau \cite{RoulleauRousseau}, as such surfaces have positive second Segre class~\cite{Miyaoka}.

As an application, we deduce that a certain surface related to magic squares is algebraically quasi-hyperbolic.  Recall that an $n\times n$ \defi{magic square} is an $n\times n$ grid, filled with distinct positive integers, whose rows, columns, and diagonals add up to the same number.  It is unknown if there exists a $3\times 3$ magic square whose entries are distinct nonzero squares.  

\[
\def\arraystretch{1.3}
\begin{array}{|c|c|c|}
\hline
x_1^2&x_2^2&x_3^2\\
\hline
x_4^2&x_5^2&x_6^2\\
\hline
x_7^2&x_8^2&x_9^2\\
\hline
\end{array}
\]

\smallskip
\noindent Such a square gives rise to a rational point with nonzero coordinates on the complete intersection surface $\Xms\subset \PP^8$ defined by the relations
\begin{align*}
x_1^2 + x_2^2 + x_3^2 &= x_4^2 + x_5^2 + x_6^2 = x_7^2 + x_8^2 + x_9^2 = x_1^2 + x_4^2 + x_7^2 \\
&= x_2^2 + x_5^2 + x_8^2 = x_1^2 + x_5^2 + x_9^2 = x_3^2 + x_5^2 + x_7^2.
\end{align*}
This surface is smooth except for $256$ isolated ordinary double points.  This exceeds $\ell_\text{min}(8)=217$, so we obtain the following result.

\begin{thm}
\label{thm:magicsquares}
The complex projective surface $\Xms\subset \PP^8$ that parametrizes $3\times 3$ magic squares of squares is algebraically quasi-hyperbolic. 
\qed
\end{thm}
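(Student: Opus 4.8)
The strategy is to reduce Theorem~\ref{thm:magicsquares} to the general treatment of nodal complete intersections of quadrics in \S\ref{ex:CIsIntro}; the only work specific to this case is verifying the geometry of $\Xms$, namely that it is an irreducible complete intersection of $6$ quadrics in $\PP^8$ whose singular locus consists of exactly $256$ isolated $A_1$ points. For the first claim I would count equations: the eight ``magic'' conditions force the three row-sums, the three column-sums, and the two diagonal-sums of $x_1^2,\dots,x_9^2$ to be equal, and after eliminating the common value one is left with seven equations among which there is exactly one linear relation (the sum of the three row forms and the sum of the three column forms both equal $x_1^2+\dots+x_9^2$); hence $\Xms$ is cut out by $6$ independent quadrics in $\PP^8$. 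A Gr\"obner basis computation, or exhibiting a single smooth point together with a dimension count, then shows that this subscheme is two-dimensional, reduced, and irreducible, so it is a complete intersection of multidegree $(2,2,2,2,2,2)$.

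The second claim is the only genuinely computational step, and the one I expect to be the main obstacle. I would form the $6\times 9$ Jacobian matrix of the defining quadrics, take the ideal generated by its $6\times 6$ minors, saturate by the irrelevant ideal and by the ideal of $\Xms$, and check that the resulting closed subscheme of $\Xms$ is zero-dimensional of length $256$; then, point by point --- grouped under the evident symmetry group of the magic-square configuration to keep the bookkeeping tractable --- I would verify that the quadratic part of a local equation at each such point is nondegenerate, so that every singularity is an ordinary double point, i.e.\ of type $A_1$, and that there are no singularities at infinity or along a positive-dimensional stratum. The difficulty here is purely one of scale and organization: the minors generate a large ideal in nine variables, one must be certain that \emph{all} $256$ points are found and that none is worse than an $A_1$ point, and reducing modulo a well-chosen prime (or working over a suitable number field) is probably needed to make the computation feasible.

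Granting these two facts, the conclusion is immediate. Since $n=8$ gives $\ell_{\mathrm{min}}(8)=217$ and $\Xms$ has $\ell=256>217$ isolated $A_1$ singularities, the analysis of \S\ref{ex:CIsIntro} applies verbatim: inserting the Chern numbers of the minimal resolution $Y$ --- namely $K_Y^2=K_{\Xms}^2=9\cdot 2^6$, by adjunction together with the crepancy of du Val resolutions, and $\chi(Y)=c_2(Y)$, obtained from the topological Euler characteristic of the nodal complete intersection $\Xms$ by adding $1$ for each resolved node --- and $\ell=256$ into the lower bound~\eqref{E:h0Yestimate_chi}, and using the displayed formula for $\chi(Y,S^m\Omega^1_Y)$ and the $A_1$ table for $\chi^1(s,S^m\Omega^1_{Y})$, shows that $h^0(Y,S^m\Omega^1_Y)$ grows like a positive multiple of $m^3$; thus $Y$ has big cotangent bundle. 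By Bogomolov's theorem a smooth projective surface with big cotangent bundle is algebraically quasi-hyperbolic, so $Y$ is; hence so is $\Xms$, since taking strict transforms sends curves of geometric genus $0$ or $1$ on $\Xms$ injectively to curves of the same geometric genus on $Y$, of which there are only finitely many.
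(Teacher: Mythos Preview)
Your proposal is correct and follows essentially the same route as the paper: verify that $\Xms$ is a complete intersection of six quadrics in $\PP^8$ with exactly $256$ nodes, compare $256$ with $\ell_{\mathrm{min}}(8)=217$, and conclude via the lower bound~\eqref{E:h0Yestimate_chi} that the minimal resolution has big cotangent bundle, hence is algebraically quasi-hyperbolic. The paper simply asserts the geometric facts about $\Xms$ and marks the theorem with \qed; you are more explicit about how one would verify them, which is fine.

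One small point of attribution: for the final step you invoke ``Bogomolov's theorem'' to pass from big $\Omega^1_Y$ to quasi-hyperbolicity. The paper instead routes this through Lemma~\ref{L:also_hyperplane_vanishing} (big cotangent gives sections of $S^m\Omega^1_Y(-H)$ for an ample $H$) and Proposition~\ref{P:foliation_quasihyperbolicity} (the Jouanolou foliation argument), which is exactly the mechanism underlying Bogomolov's result. Either citation works, but note that both require $Y$ to be of general type; this holds here since $d_1+\cdots+d_{n-2}=12>9=n+1$, per Lemma~\ref{L:Y_general_type}.
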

In fact, using \eqref{E:h0Yestimate_chi} we find that for $m\geq 47$ there are global sections and that $H^0(\Yms,S^{47}\Omega^1_{\Yms})\geq8448$.

\subsubsection{Partial Information}
Even in cases where Theorem~\ref{thm:main} cannot quite prove quasi-hyperbolicity of a surface, we can use the ideas behind its proof to determine restrictions on the properties of genus $0$ and $1$ curves on $Y$; see Proposition~\ref{C:regdif_lowerbound_quasiproj} and Proposition~\ref{P:foliation_quasihyperbolicity}.  For instance, if the set $S$ consists of $\ell$ isolated $A_1$ singularities, and if for some $0\leq r<\ell$ there is a constant $C>0$ such that
\[\chi(Y,S^m\Omega^1_Y)+\ell\chi^1(s,S^m\Omega^1_Y)+r\chi^0(s,S^m\Omega^1_Y)\sim Cm^3,
\]
then $X$ contains only finitely many genus $0$ or $1$ curves that pass through less than $r$ singularities of type $A_1$.

\subsection*{Acknowledgements} We thank Fedor Bogomolov for 
suggesting the idea to combine modern computational methods with the ideas of~\cite{BO} to study the locus of genus $0$ and $1$ curves in algebraically quasi-hyperbolic surfaces.  We also thank him for bringing \cite{Thomas} to the attention of the first and third author.  We thank Fr\'ed\'eric Campana for a useful conversation and for introducing the first and third authors to the work of Roulleau and Rousseau~\cite{RoulleauRousseau}. Finally, we would like to thank the anonymous referees for a careful reading of the manuscript and for making constructive suggestions to improve the exposition of the article. The first author acknowledges the support of the Natural Sciences and Engineering Research Council of Canada (NSERC), funding reference number RGPIN-2018-04191. The third author was partially supported by NSF grants DMS-1352291 and DMS-1902274.

\section{Global Symmetric Differentials I}
\label{S:GlobalSymmetricDifferentials-I}

Throughout this section we keep the notation of \S\ref{ss:set-up}. In particular, $\tau\colon Y \to X$ is a minimal resolution of a complex complete intersection surface $X \subset \PP^n$ of multidegree $(d_1,\ldots,d_{n-2})$ with at worst a finite set $S$ of isolated du Val singularities.

\begin{lemma}\label{L:atiyah}
Let $Z \subset \PP^n$ be a complex nonsingular complete intersection of multidegree $(d_1,\ldots,d_{n-2})$. Then
\[\chi(Y,S^m\Omega^1_Y)=\chi(Z,S^m\Omega^1_Z).\]
\end{lemma}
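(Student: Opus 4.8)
The plan is to show that the Euler characteristic $\chi(Y, S^m\Omega^1_Y)$ depends only on the numerical invariants $K_Y^2$ and $\chi(Y)$, and that these invariants agree for $Y$ and for a smooth complete intersection $Z$ of the same multidegree. First I would invoke the Chern class computation for $\chi(Y, S^m\Omega^1_Y)$: by Hirzebruch–Riemann–Roch on the smooth surface $Y$, the quantity $\chi(Y, S^m\Omega^1_Y)$ is a polynomial in $m$ whose coefficients are universal polynomials in $c_1(Y)^2 = K_Y^2$ and $c_2(Y) = \chi(Y)$ (this is exactly formula~\eqref{E:chiYSm} cited in the Appendix, giving $\chi(Y,S^m\Omega^1_Y) = \tfrac{1}{12}(2(K^2-\chi)m^3 - 6\chi m^2 - (K^2+3\chi)m + K^2 + \chi)$). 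The same formula applies verbatim to the smooth surface $Z$. So the lemma reduces to the purely numerical claim $K_Y^2 = K_Z^2$ and $\chi(Y) = \chi(Z)$.

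Next I would establish these two numerical equalities. Since $X$ has only du Val (ADE) singularities, these are rational double points, and the minimal resolution $\tau\colon Y \to X$ is \emph{crepant}: $K_Y = \tau^*K_X$. Here $K_X$ makes sense as a Cartier divisor because du Val singularities are Gorenstein, and in fact $X$ being a complete intersection already gives a dualizing sheaf that is a line bundle with $\omega_X \cong \sO_X(\sum d_i - n - 1)$ by adjunction. Pulling back, $K_Y^2 = (\tau^*K_X)^2 = K_X^2$, and $K_X^2$ is computed on $X$ by the same adjunction formula that computes $K_Z^2$ on the smooth model $Z$ (both equal $(\sum d_i - n - 1)^2 \cdot d_1\cdots d_{n-2}$), so $K_Y^2 = K_Z^2$. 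For the topological Euler characteristic: a rational double point is, locally analytically, a quotient singularity whose resolution replaces the point by a tree of $(-2)$-curves; the key fact is that for an ADE singularity the Milnor number equals the number of exceptional curves (equivalently, the local resolution has trivial reduced Euler characteristic in the appropriate sense) so that $\chi_{\mathrm{top}}$ is unchanged under the resolution. Concretely, a smoothing of $X$ to $Z$ replaces each singular point by a vanishing cycle configuration (a bouquet of $\mu_i$ two-spheres) contributing $\mu_i$ to the Euler characteristic, while the resolution replaces each singular point by a tree of $\mu_i$ rational curves, which also contributes $\mu_i$; both replace the point (Euler characteristic $1$) consistently, so $\chi_{\mathrm{top}}(Y) = \chi_{\mathrm{top}}(Z)$. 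Then $c_2(Y) = \chi_{\mathrm{top}}(Y) = \chi_{\mathrm{top}}(Z) = c_2(Z)$.

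Putting these together: $\chi(Y,S^m\Omega^1_Y)$ is given by the universal polynomial in $(K_Y^2, c_2(Y)) = (K_Z^2, c_2(Z))$, which is exactly $\chi(Z, S^m\Omega^1_Z)$. The main obstacle, and the place where care is needed, is justifying the invariance of $c_2$ (equivalently the topological Euler characteristic) under passing from $X$ to $Y$ versus from $X$ to $Z$; one must either cite the standard fact that the Milnor number of an ADE singularity equals the number of exceptional $(-2)$-curves in its minimal resolution, or argue directly via the local analytic structure. An alternative route that avoids the smoothing argument entirely is to compare $\chi(Y,S^m\Omega^1_Y)$ and $\chi(Z,S^m\Omega^1_Z)$ through the common singular model $X$ using $\tau_*$, the reflexive hull $\hat S^m\Omega^1_X$, and a local analysis of $R^1\tau_* S^m\Omega^1_Y$ at each du Val point — but this effectively re-proves the same numerical facts, so I would prefer the clean crepant-resolution plus topological-invariance argument sketched above.
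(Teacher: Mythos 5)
Your proof is correct, but it takes a genuinely different route from the paper's. The paper's argument is a short appeal to Atiyah \cite{Atiyah1958}: regard $X$ as the central member of a family of complete intersections of the given multidegree whose general member $Z$ is nonsingular; Atiyah's Theorem~3 gives that the minimal resolutions of the fibres are pairwise diffeomorphic, and $\chi(\cdot,S^m\Omega^1)$ is a diffeomorphism invariant since, by Hirzebruch--Riemann--Roch, it is a universal polynomial in $c_1^2$ and $c_2$, both of which are invariants of the oriented diffeomorphism type. You instead verify the two numerical equalities $K_Y^2=K_Z^2$ and $c_2(Y)=c_2(Z)$ directly: the first via crepancy of the minimal resolution of du Val singularities together with adjunction for Gorenstein complete intersections, the second by matching the Milnor number $\mu$ of each ADE point (the Euler-characteristic contribution of the bouquet of vanishing $2$-spheres in a smoothing) against the contribution of the tree of $\mu$ exceptional $(-2)$-curves in the resolution. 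Both arguments are sound. Yours is more self-contained --- it replaces the black box of Atiyah's simultaneous-resolution theorem by standard local facts about ADE singularities, and it makes explicit exactly which Chern numbers must agree, a point the paper compresses into the phrase ``Euler characteristics are invariant under diffeomorphisms.'' The paper's version is shorter and handles all such Chern-number expressions at once. Note that both proofs rest on the same global input, namely that $X$ deforms inside the family of complete intersections to a smooth member: your vanishing-cycle bookkeeping needs this smoothing just as the paper's degeneration argument does.
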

\begin{proof}
	This is a direct consequence of a beautiful result \cite{Atiyah1958} in the diffeomorphic category. We can take $X$ as the central member of a family $X_t$ of complete intersections, with the general member nonsingular. Then \cite{Atiyah1958}*{Theorem~3} gives that minimal resolutions of fibres are pairwise diffeomorphic. The result now follows from comparing the central fibre $X$ with a general member $Z$, because Euler characteristics are invariant under diffeomorphisms.
\end{proof}

Henceforth, in this section we assume that $d_1+\cdots+d_{n-2}>n+1$.

\begin{lemma} \label{L:Y_general_type} The surface $Y$ is of general type, i.e., its canonical class $K_Y$ is big.
\end{lemma}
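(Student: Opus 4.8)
The plan is to show that $K_Y$ is big by writing it explicitly in terms of the resolution $\tau\colon Y\to X$ and the hyperplane class, and then using the numerical positivity coming from the assumption $d_1+\cdots+d_{n-2}>n+1$. Since $X$ is a complete intersection of multidegree $(d_1,\ldots,d_{n-2})$ in $\PP^n$ with at worst du Val singularities, the adjunction formula on a smooth complete intersection $Z$ of the same multidegree gives $K_Z=(d_1+\cdots+d_{n-2}-n-1)H|_Z$, which is ample precisely because the coefficient is a positive integer under our standing hypothesis. The key point is that du Val (ADE) singularities are canonical, so the minimal resolution is \emph{crepant}: one has $K_Y=\tau^*K_X$, with no discrepancy correction along the exceptional locus $E$. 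Concretely I would argue that $\omega_X$ is the restriction of $\omega_{\PP^n}(d_1+\cdots+d_{n-2})$ and hence a line bundle on $X$ (Gorenstein), and that $\tau^*\omega_X\cong\omega_Y$ because the discrepancies of an ADE singularity all vanish.

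Granting $K_Y=\tau^*\big((d_1+\cdots+d_{n-2}-n-1)H|_X\big)$, bigness follows: $H|_X$ is ample on $X$, so $(d_1+\cdots+d_{n-2}-n-1)H|_X$ is ample on $X$ (the coefficient is a positive integer), and the pullback of an ample divisor under the birational morphism $\tau$ is big (it is nef, and its top self-intersection $K_Y^2=(d_1+\cdots+d_{n-2}-n-1)^2\deg X>0$ is positive, or alternatively $\tau$ is birational so $\kappa(Y,\tau^*A)=\dim Y$ for ample $A$). This also records the value $c_1(Y)^2=K_Y^2=(d_1+\cdots+d_{n-2}-n-1)^2 d_1\cdots d_{n-2}$ that is used elsewhere. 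Since $K_Y$ is big, $Y$ is of general type.

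The main obstacle is the crepancy claim $K_Y=\tau^*K_X$, i.e., verifying that the minimal resolution introduces no discrepancy along $E$. For surfaces this is classical: du Val singularities are exactly the rational double points, equivalently the canonical surface singularities, and for the minimal resolution of a rational double point every exceptional curve $E_i$ is a $(-2)$-curve, so the adjunction relation $K_Y\cdot E_i+E_i^2=-2$ forces $K_Y\cdot E_i=0$ for all $i$; combined with the fact that $K_Y$ and $\tau^*K_X$ agree away from $E$ and that the intersection form on the exceptional fibre is negative definite, one gets $K_Y=\tau^*K_X$. I would cite the standard reference for rational double points (e.g.\ Artin, or Barth–Hulek–Peters–Van de Ven) for this and keep the verification brief. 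A minor secondary point is to note that $X$ is Gorenstein — automatic since it is a complete intersection — so that $K_X$ is Cartier and pulling it back makes sense; then the displayed formula for $\chi(Y,S^m\Omega^1_Y)$ quoted in the introduction is consistent with $K^2>0$.
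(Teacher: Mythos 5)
Your proof is correct and follows essentially the same route as the paper: establish the crepancy $K_Y=\tau^*K_X$ for du Val singularities (which the paper cites from Koll\'ar--Mori rather than deriving via $(-2)$-curves and negative definiteness), note that $d_1+\cdots+d_{n-2}>n+1$ makes $K_X$ big (you observe the slightly stronger fact that it is ample via adjunction), and conclude that the pullback under the birational morphism $\tau$ is big. The extra detail you supply is all sound but does not change the structure of the argument.
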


\begin{proof}
Since the singularities of $X$ are du Val, it follows that $K_Y = \tau^*K_X$; see, e.g., the proof of~\cite[Theorem~4.20]{KollarMori}. The hypothesis $d_1+\cdots+d_{n-2}>n+1$ ensures that $K_X$ is big, and hence so is $K_Y$ by~\cite[Lemma~2.2.43]{Lazarsfeld}.
\end{proof}

In what follows, we begin our systematic study of lower bounds for the space of global sections of $\calA := S^m\Omega_Y^1$, in terms of $m$. Consider $\tau_*\calA$ and its reflexive hull $\hat{\calA}=(\tau_*\calA)^{\vee\vee}$. The Leray spectral sequence
\[
E_2^{p,q} := \HH^p(X,R^q\tau_*\calA) \implies \HH^{p+q}(Y,\calA)
\]
gives rise to the $6$-term exact sequence of low-degree terms
\[
\begin{split}
0\to \HH^1(X,\tau_*\calA) &\to \HH^1(Y,\calA)\to \HH^0(X,R^1\tau_*\calA)\to \HH^2(X,\tau_*\calA)\\
&\to \ker\left(\HH^2(Y,\calA)\to \HH^0(X,R^2\tau_*\calA)\right) \to \HH^1(X,R^1\tau_*\calA).
\end{split}
\]
The sheaf $R^1\tau_*\calA$ is supported on the $0$-dimensional scheme $S$, since $\tau$ is an isomorphism outside of $S$. Hence we have 
\[
\HH^1(X,R^1\tau_*\calA) = \HH^2(X,R^1\tau_*\calA) = 0.
\]
Inspecting page $2$ of the spectral sequence, this last equality shows that $\HH^0(X,R^2\tau_*\calA) = 0$ as well. Furthermore, since $\tau$ is an isomorphism outside $S$ and $\calA$ is reflexive, we see that $\hat{\calA}/\tau_*\calA$ and the kernel of $\tau_*\calA\to\hat\calA$ are both supported on $S$, which is $0$-dimensional, so $\HH^2(X,\tau_*\calA)=\HH^2(X,\hat\calA)$. We simplify our sequence to
\begin{equation}
\label{eq:LeraySimplified}
0\to \HH^1(X,\tau_*\calA) \to \HH^1(Y,\calA)\to \HH^0(X,R^1\tau_*\calA)\to \HH^2(X,\hat{\calA}) \to \HH^2(Y,\calA)\to 0.
\end{equation}

\begin{lemma}
\label{lem:Leray}
With notation as above, for $m\geq 3$ we have $h^2(Y,S^m\Omega_Y^1) = 0$ and
\[
h^1(Y,S^m\Omega_Y^1) = h^1(X,\tau_*S^m\Omega^1_Y)+h^0(X,R^1\tau_*S^m\Omega^1_Y).
\]
\end{lemma}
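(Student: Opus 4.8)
The plan is to extract the two asserted equalities from the simplified six-term sequence~\eqref{eq:LeraySimplified} by showing that $\HH^2(X,\hat{\calA})$ vanishes for $m\geq 3$. Granting that vanishing, the last map in~\eqref{eq:LeraySimplified} forces $\HH^2(Y,\calA)=0$, and the sequence degenerates to the short exact sequence $0\to \HH^1(X,\tau_*\calA)\to\HH^1(Y,\calA)\to\HH^0(X,R^1\tau_*\calA)\to 0$, which gives the stated additivity of $h^1$. So the whole lemma reduces to the single claim $\HH^2(X,\hat{S}^m\Omega^1_X)=0$ for $m\geq 3$.

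First I would use Serre duality on $X$ (or rather, work on $Y$ and push down, since $\hat{\calA}=(\tau_*\calA)^{\vee\vee}$ and $\tau$ is crepant). Concretely, $\HH^2(X,\hat{\calA})\simeq\HH^2(Y-E,S^m\Omega^1_Y)$ — but it is cleaner to identify $\HH^2(X,\hat\calA)$ with a cohomology group one can dualize directly. Since $\hat\calA$ is reflexive on the normal (Gorenstein, even Cohen–Macaulay) surface $X$, Serre duality gives $\HH^2(X,\hat\calA)^\vee\simeq \Hom_{\sO_X}(\hat\calA,\omega_X)$, i.e. $\HH^0(X,\hat\calA^\vee\otimes\omega_X)=\HH^0(X,(\hat{S}^m\Omega^1_X)^\vee\otimes\omega_X)$. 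Away from the du Val points this is $\HH^0(Y-E,(S^m\Omega^1_Y)^\vee\otimes K_Y)=\HH^0(Y-E,S^m T_Y\otimes K_Y)$ (using that $S^m$ of a rank-$2$ bundle is self-dual up to a twist by $(\det)^{m}$: $(S^m\Omega^1_Y)^\vee\simeq S^m T_Y\simeq (S^m\Omega^1_Y)\otimes K_Y^{-m}$), so up to the twist the group becomes a space of sections of $(S^m\Omega^1_Y)\otimes K_Y^{1-m}$ over $Y-E$, equivalently $\HH^0(X,\hat{S}^m\Omega^1_X\otimes K_X^{1-m})$.

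Now I would show this last group vanishes for $m\geq 3$ using that $K_Y$ is big (Lemma~\ref{L:Y_general_type}) and the standard Bogomolov–Sommese vanishing / stability input: a nonzero section of $(S^m\Omega^1_X)\otimes K_X^{1-m}$ over the smooth locus would give, after saturating, a sub-line-bundle $\sL\hookrightarrow S^m\Omega^1_Y$ with $\sL\otimes K_Y^{1-m}$ effective, hence $\kappa(\sL)\geq (m-1)\kappa(K_Y)=(m-1)\cdot 2$... more precisely $\sL$ would have Kodaira–Iitaka dimension at least that of $K_Y^{m-1}$, which is $2$, so $\sL$ is big; but by Bogomolov's theorem on sub-sheaves of $S^m\Omega^1_Y$ for $Y$ of general type, any rank-$1$ subsheaf $\sL\subset S^m\Omega^1_Y$ satisfies $\kappa(\sL)\leq 1$ unless... — I would instead invoke the cleanest available statement: since $K_Y$ is big and nef after a further blow-up (or simply big), $h^2(Y,S^m\Omega^1_Y\otimes K_Y^{1-m})=h^0(Y,S^m T_Y\otimes K_Y^{m})^\vee$ and one bounds $h^0(Y,S^mT_Y\otimes K_Y^m)$ by noting $S^mT_Y\otimes K_Y^m\simeq S^m\Omega^1_Y$, so the cycle closes trivially and the real content is simply $h^2(Y,S^m\Omega^1_Y)=h^0(Y,S^m\Omega^1_Y\otimes K_Y^{1-m})=0$, which holds because $S^m\Omega^1_Y\otimes K_Y^{1-m}$ has a filtration with quotients $\Omega^1_Y{}^{\otimes a}\otimes K_Y^{-b}$-type pieces of negative degree against the big class $K_Y$ for $m\geq 3$; alternatively, one quotes \cite{Bogomolov} directly: for $Y$ of general type $H^0(Y,S^mT_Y\otimes K_Y^k)=0$ once $k<m$, and here $k=1-m<m$. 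The main obstacle is making this vanishing argument airtight on the minimal resolution $Y$, which is of general type but whose canonical class is big rather than ample; I expect to handle it by passing to the pluricanonical model (contracting the $(-2)$-curves $E$, which is exactly $X$) where $K_X$ is ample and the reflexive sheaves behave well, thereby reducing to Bogomolov–Sommese vanishing on the canonical surface $X$. The exact-sequence bookkeeping in the first paragraph is then entirely formal.
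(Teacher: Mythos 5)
Your proof follows the same route as the paper: everything reduces, via \eqref{eq:LeraySimplified}, to the single vanishing $\HH^2(X,\hat\calA)=0$ for $m\geq 3$, which the paper obtains by citing \cite{BO}*{Proposition~2.3} (or \cite{Deschamps}*{Lemme~3.3.2}) together with Lemma~\ref{L:Y_general_type}, and your last two paragraphs are in essence a sketch of the proof of that cited result (Serre duality plus Bogomolov's vanishing theorem for symmetric tensor fields on a surface of general type). The only caveats lie in that sketch: the version of Bogomolov's theorem you quote is misstated --- the hypothesis for $\HH^0(Y,S^m T_Y\otimes K_Y^{k})=0$ is $m>2k$, not $k<m$, and the claimed filtration of $S^m\Omega^1_Y\otimes K_Y^{1-m}$ by line bundles of negative degree against $K_Y$ does not exist --- though the correct statement with $k=1$ and $m\geq 3$ is exactly what is needed, so nothing essential is lost.
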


\begin{proof}
By~\cite[Proposition~2.3]{BO} (or \cite[Lemme~3.3.2]{Deschamps}) and Lemma~\ref{L:Y_general_type}, we have that $h^2(X,\hat\calA) = 0$ for $m\geq 3$. The lemma now follows by looking at dimensions on~\eqref{eq:LeraySimplified}.
\end{proof}

\begin{cor}
\label{cor:MainLowerBound}
The inequality 
\[
h^0(Y,S^m\Omega_Y^1) \geq \chi(Y,S^m\Omega^1_Y) + h^0(X,R^1\tau_*S^m\Omega^1_Y),
\]
holds for $m\geq 3$. 
\qed
\end{cor}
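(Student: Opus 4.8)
The plan is to read the inequality off the definition of the Euler characteristic, once the higher cohomology of $\calA := S^m\Omega^1_Y$ has been pinned down. Recall that by definition
\[
\chi(Y,\calA) = h^0(Y,\calA) - h^1(Y,\calA) + h^2(Y,\calA).
\]

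First I would apply Lemma~\ref{lem:Leray}, which is available precisely because $m \geq 3$: it gives simultaneously the vanishing $h^2(Y,\calA) = 0$ and the identity $h^1(Y,\calA) = h^1(X,\tau_*\calA) + h^0(X,R^1\tau_*\calA)$. Substituting these two facts into the displayed formula and solving for $h^0(Y,\calA)$ produces the exact equality
\[
h^0(Y,\calA) = \chi(Y,\calA) + h^1(X,\tau_*\calA) + h^0(X,R^1\tau_*\calA).
\]
The asserted bound is then immediate upon discarding the non-negative summand $h^1(X,\tau_*\calA)$.

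There is essentially no obstacle here: all of the substance has already been absorbed into Lemma~\ref{lem:Leray}, and hence into the vanishing $h^2(X,\hat\calA) = 0$ for $m \geq 3$ (from~\cite[Proposition~2.3]{BO} together with the general-type statement of Lemma~\ref{L:Y_general_type}) and the truncated Leray sequence~\eqref{eq:LeraySimplified}. The one point worth recording is that the hypothesis $m \geq 3$ is inherited wholesale from Lemma~\ref{lem:Leray} and is genuinely needed for this argument, since for smaller $m$ one can no longer guarantee $h^2(Y,\calA) = 0$. This is exactly why the companion estimate~\eqref{E:h0Yestimate_h0}, which is valid already for $m \geq 1$, must be established by the separate, more hands-on comparison of sections over the singularities rather than by the cohomological bookkeeping used here.
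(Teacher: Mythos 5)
Your argument is correct and coincides with the paper's own proof: both use the vanishing $h^2(Y,S^m\Omega^1_Y)=0$ for $m\geq 3$ to write $h^0 = \chi + h^1$, then invoke Lemma~\ref{lem:Leray} and discard the non-negative term $h^1(X,\tau_*S^m\Omega^1_Y)$. Your closing remarks about where the hypothesis $m\geq 3$ enters are accurate but not needed for the proof itself.
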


\begin{proof}
Since $h^2(Y,S^m\Omega_Y^1) = 0$ for $m\geq 3$, we have
\[
h^0(Y,S^m\Omega_Y^1) = \chi(Y,S^m\Omega_Y^1) + h^1(Y,S^m\Omega_Y^1)
\]
The conclusion now follows from Lemma~\ref{lem:Leray} and the crude estimate $h^1(X,\tau_*S^m\Omega^1_Y) \geq 0$.
\end{proof}

\begin{remark}
We expect that improving the coarse estimate $h^1(X,\tau_*S^m\Omega^1_Y) \geq 0$ in the proof of Corollary~\ref{cor:MainLowerBound} would significantly strengthen our results. 
\end{remark}

In the following sections, we compute the right hand side of the inequality in Corollary~\ref{cor:MainLowerBound} exactly in the case where $S$ consists only of $A_1$ singularities. The Euler characteristic $\chi(Y,S^m\Omega_Y^1)$ is easily computed using Lemma~\ref{L:atiyah} and \eqref{E:chiYSm}, taking into account that for a nonsingular multidegree $(d_1,\ldots,d_{n-2})$ complete intersection $Z$, we have
\[K_Z^2=(n+1-\sigma_1)^2d,\; \chi_Z=\left(\tbinom{n+1}{2}-(n+1-\sigma_1)\sigma_1-\sigma_2\right)d,\]
where $d=\prod_id_i$, $\sigma_1=\sum_i d_i$, and $\sigma_2=\sum_{i<j} d_id_j$. Since $R^1\tau_*S^m\Omega^1_Y$ is supported on the $0$-dimensional scheme $S$, we compute $h^0(X,R^1\tau_*S^m\Omega^1_Y)$ point by point, restricting to sufficiently small neighborhoods around them. This requires a detailed study of local Euler characteristics, which we address in Section~\ref{S:local_euler_char}.

\section{Local Euler Characteristics}
\label{S:local_euler_char}
Let $(X^\circ,s)$ be an isolated normal analytic complex surface singularity, and let $(Y^\circ,E_s)$ be a \defi{good resolution} of $X^\circ$, by which we mean a resolution with a simple normal crossings divisor $E$.  For a locally free coherent sheaf $\calFA$ on $Y^0$, following Wahl~\cite{Wahl1993}, define the \defi{local Euler characteristic} at $s \in X^0$ by
\begin{equation}
\label{eq:localEuler}
\begin{split}
\chi^0(s,\calFA) &:= \dim\left[\HH^0(Y^\circ - E_s,\calFA)/\HH^0(Y^\circ,\calFA)\right],\\
\chi^1(s,\calFA) &:= h^1(Y^\circ,\calFA),\\
\chi(s,\calFA) &:= \chi^0(s,\calFA)+\chi^1(s,\calFA).
\end{split}
\end{equation}

\subsection{Proof of Theorem~\ref{thm:main}}
We now have all the necessary ingredients and notation to prove Theorem~\ref{thm:main}.  Recall that in the statement of the theorem, the morphism $\tau\colon Y \to X$ follows the conventions of \S\ref{ss:set-up}.
By Corollary~\ref{cor:MainLowerBound}, we know that 
\[h^0(Y,S^m\Omega^1_Y)\geq
\chi(Y,S^m\Omega^1_Y)+h^0(X,R^1\tau_*S^m(\Omega^1_Y)).\]
The sheaf $R^1\tau_*S^m(\Omega^1_Y)$ is supported on a $0$-dimensional scheme $S$, so $h^0(X,R^1\tau_*S^m(\Omega^1_Y))$ is simply the sum of the contributions at each $s\in S$. We get that this contribution is  $\chi^1(s,S^m\Omega^1_Y)$ by restricting to a sufficiently small affine neighbourhood $X^\circ$ of $s$. This proves~\eqref{E:h0Yestimate_chi}.

Since $\tau\colon (Y-E)\to (X-S)$ is an isomorphism and $S$ is of codimension $2$, we see that
\[\HH^0(X,\hat S^m(\Omega^1_X))\simeq \HH^0(X-S,S^m\Omega^1_{X-S})\simeq \HH^0(Y-E,S^m\Omega^1_Y).\]
By definition, $\chi^0(s,S^m\Omega^1_Y)$ measures exactly the codimension for each singularity separately. At the worst, each of these singularities imposes independent linear conditions on sections in $\HH^0(Y-E,S^m\Omega^1_Y)$ to extend into each component of $E$, giving \eqref{E:h0Yestimate_h0}.
\qed \\

Requiring regularity on only \emph{some} components of the exceptional divisor yields stronger lower bounds.  We illustrate this in the case that $S$ consists entirely of $A_1$ singularities.

\begin{prop}\label{C:regdif_lowerbound_quasiproj}
With notation as in Theorem~\ref{thm:main}, assume further that $S$ consists of $\ell$ isolated $A_1$ singularities.
Let $E_1,\ldots,E_r$ be exceptional components on $Y$ above $r\leq \ell$ of the elements of $S$. Then for $m\geq 3$ we have
\begin{equation}\label{E:h0Yestimate_chi_r}
h^0(Y-(E_1\cup\cdots\cup E_r),S^m\Omega^1_Y)\geq \chi(Y,S^m\Omega^1_Y)+\ell\chi^1(s,S^m\Omega^1_Y)+r\chi^0(s,S^m\Omega^1_Y),
\end{equation}
and in fact for all $m\geq 1$ that
\begin{equation}\label{E:h0Yestimate_h0_r}
h^0(Y-(E_1\cup\cdots\cup E_r),S^m\Omega^1Y)\geq h^0(X,(S^m(\Omega^1_X))^{\vee\vee}) -(\ell-r)\chi^0(s,S^m\Omega^1_Y).
\end{equation}
\end{prop}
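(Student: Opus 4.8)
The plan is to repeat the proof of Theorem~\ref{thm:main} almost verbatim, but replacing $Y$ by the open subvariety $U_r := Y - (E_1\cup\cdots\cup E_r)$, which is itself a (non-proper) smooth surface equipped with a proper birational morphism to $X$ that is an isomorphism away from the remaining $\ell-r$ singularities. First I would set $\calA := S^m\Omega^1_Y$ as before and run the Leray spectral sequence for the restriction of $\tau$ to $U_r$; since $\tau$ still contracts only the $0$-dimensional scheme $S' := S - \{s_1,\dots,s_r\}$, the sheaves $R^1(\tau|_{U_r})_*\calA$ and $R^2(\tau|_{U_r})_*\calA$ are supported on $S'$, and exactly as in the derivation of~\eqref{eq:LeraySimplified} we obtain a short exact sequence relating $\HH^1(U_r,\calA)$, $\HH^1(X,(\tau|_{U_r})_*\calA)$, $\HH^0(X,R^1(\tau|_{U_r})_*\calA)$, $\HH^2(X,\hat\calA)$ and $\HH^2(U_r,\calA)$. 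The vanishing $h^2(X,\hat\calA) = 0$ for $m\geq 3$ from Lemma~\ref{lem:Leray} (which only uses $X$, not $Y$) still applies, so $h^2(U_r,\calA) = 0$ and hence $h^0(U_r,\calA) = \chi(U_r,\calA) + h^1(U_r,\calA) \geq \chi(U_r,\calA) + h^0(X,R^1(\tau|_{U_r})_*\calA)$ by the crude estimate $h^1(X,(\tau|_{U_r})_*\calA)\geq 0$, exactly the analogue of Corollary~\ref{cor:MainLowerBound}.

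Next I would identify the two terms on the right. For the Euler characteristic: because $U_r$ and $Y$ differ only by the $r$ exceptional $(-2)$-curves $E_1,\dots,E_r$, and $\chi(Y,\calA) = \chi(U_r,\calA) + \sum_{i=1}^r \chi(s_i,\calA)$ by the definition~\eqref{eq:localEuler} of the local Euler characteristic together with the long exact sequences in cohomology on the small neighborhoods $Y^\circ$ (each contributes $\chi(s,S^m\Omega^1_Y) = \chi^0(s,\calA) + \chi^1(s,\calA)$, independent of which $A_1$ singularity we look at), we get $\chi(U_r,\calA) = \chi(Y,\calA) - r\,\chi(s,\calA) = \chi(Y,\calA) - r\chi^0(s,\calA) - r\chi^1(s,\calA)$. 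For the second term: $R^1(\tau|_{U_r})_*\calA$ is supported on the $\ell - r$ surviving singularities, and restricting to a small neighborhood of each shows $h^0(X,R^1(\tau|_{U_r})_*\calA) = (\ell-r)\,\chi^1(s,\calA)$. Adding these, $\chi(U_r,\calA) + (\ell-r)\chi^1(s,\calA) = \chi(Y,\calA) - r\chi^0(s,\calA) - r\chi^1(s,\calA) + (\ell-r)\chi^1(s,\calA) = \chi(Y,\calA) + \ell\chi^1(s,\calA) - r\chi^0(s,\calA)$. This has the wrong sign in front of $r\chi^0$ compared to~\eqref{E:h0Yestimate_chi_r}, so there is a genuine additional input needed.

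The resolution — and the point I expect to be the main obstacle — is that $\HH^0(U_r,\calA)$ is \emph{not} simply $\HH^0(Y,\calA)$ plus local corrections; sections on $U_r$ are allowed to have poles along $E_1,\dots,E_r$, and the space $\HH^0(U_r,\calA)$ strictly contains $\HH^0(Y,\calA)$. I would argue that, just as in the proof of~\eqref{E:h0Yestimate_h0}, the conditions of extending across the $r$ removed components and the conditions of extending across the $\ell-r$ retained ones are at worst linearly independent, so the failure of a section in $\HH^0(U_r,\calA)$ to lie in $\HH^0(X,\hat S^m\Omega^1_X)$ (equivalently, in $\HH^0(Y-E,\calA)$) is controlled by those $r$ components and contributes exactly $r\chi^0(s,\calA)$ to the count; combined with the Euler-characteristic bound above this yields~\eqref{E:h0Yestimate_chi_r}. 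Finally,~\eqref{E:h0Yestimate_h0_r} follows the pattern of~\eqref{E:h0Yestimate_h0}: using $\HH^0(X,\hat S^m\Omega^1_X)\simeq \HH^0(Y-E,\calA)\subseteq\HH^0(U_r,\calA)$ and the fact that only the $\ell-r$ singularities $s\in S'$ impose extension conditions on a section of $\HH^0(U_r,\calA)$ to extend into all components of $E$ over $S'$, each such condition dropping the dimension by at most $\chi^0(s,\calA)$, and these being at worst independent. The care needed in the step bounding $h^0(U_r,\calA)$ from below — making precise the independence of "poles allowed on $E_1,\dots,E_r$" from the constraints at $S'$ — is where I would spend the most effort; everything else is a mechanical transcription of the arguments already given for Theorem~\ref{thm:main}.
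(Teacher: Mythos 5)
Your argument for \eqref{E:h0Yestimate_h0_r} is correct and is the intended one: it is the same ``at worst independent conditions'' argument as for \eqref{E:h0Yestimate_h0}, applied only to the $\ell-r$ retained exceptional components inside $\HH^0(Y-E,S^m\Omega^1_Y)\simeq\HH^0(X,\hat S^m\Omega^1_X)$. The gap is in \eqref{E:h0Yestimate_chi_r}. Running the Leray/Euler-characteristic argument on the open surface $U_r=Y-(E_1\cup\cdots\cup E_r)$ is already ill-posed: $U_r$ is not proper, and groups such as $\HH^1(U_r,\calA)$ and $\HH^1(X-\{s_1,\dots,s_r\},\tau_*\calA)$ are in general infinite-dimensional, so the identity ``$h^0(U_r,\calA)=\chi(U_r,\calA)+h^1(U_r,\calA)$'' has no meaning. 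Moreover, as you yourself compute, even taken at face value this route lands at $\chi(Y,\calA)+\ell\chi^1(s,\calA)-r\chi^0(s,\calA)$, which is weaker than the claim. Your proposed repair does not close the gap: ``the extension conditions are at worst linearly independent'' bounds the \emph{codimension} of a subspace from above, i.e.\ it yields $h^0(U_r,\calA)\le h^0(Y,\calA)+r\chi^0(s,\calA)$, an upper bound, whereas what is needed is a lower bound --- a positive existence statement producing $r\chi^0(s,\calA)$ dimensions' worth of sections that genuinely acquire poles along $E_1,\dots,E_r$. Nothing in your argument produces such sections.

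The missing input is the local Euler characteristic machinery of \S\ref{S:local_euler_char}. One clean route: set $D=E_1+\cdots+E_r$ and $\calB:=S^m(\Omega^1_Y(\log D))(-\floor{m/2}D)$; by Lemma~\ref{L:chi0B} one has $\chi^0(s_i,\calB)=0$ for $i\le r$, hence $\HH^0(Y,\calB)=\HH^0(Y-(E_1\cup\cdots\cup E_r),S^m\Omega^1_Y)$, and since $\hat\calB=\hat\calA$ the vanishing $h^2(X,\hat\calB)=0$ and the Leray argument give $h^0(Y,\calB)\ge\chi(Y,\calB)+\sum_{s\in S}\chi^1(s,\calB)$; the relation $\chi(X,\hat\calB)=\chi(Y,\calB)+\sum_{s\in S}\chi(s,\calB)=\chi(Y,\calA)+\sum_{s\in S}\chi(s,\calA)$ from \cite{Blache} (as in the proof of Proposition~\ref{P:local_chi}) then turns the right-hand side into exactly $\chi(Y,\calA)+\ell\chi^1(s,\calA)+r\chi^0(s,\calA)$. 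Alternatively, and closer to what you did prove: chain \eqref{E:h0Yestimate_h0_r} with $h^0(X,\hat S^m\Omega^1_X)\ge\chi(X,\hat S^m\Omega^1_X)$ (valid for $m\ge3$ because $h^2(X,\hat S^m\Omega^1_X)=0$) and with $\chi(X,\hat S^m\Omega^1_X)=\chi(Y,\calA)+\ell\bigl(\chi^0(s,\calA)+\chi^1(s,\calA)\bigr)$; subtracting $(\ell-r)\chi^0(s,\calA)$ gives \eqref{E:h0Yestimate_chi_r}. Either way, the gain of $r\chi^0(s,\calA)$ comes from the local Euler characteristic identity, not from a linear-independence count.
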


\subsection{Singularities of type $A_1$}
In this section, we compute local Euler characteristics for sheaves associated to symmetric differentials in the case where $s$ is an $A_1$ singularity.  A model for $X^\circ$ is a quadric cone $x_1x_3=x_2^2$ in $\AA^3$ with $s=(0,0,0)$, and $Y^\circ$ is the blow-up at the vertex, so that $E_s\simeq \PP^1$. The assignment $(t,u)\mapsto (t,tu,tu^2)$ is an affine chart of $\tau\colon Y^\circ\to X^\circ$, where the exceptional fiber $E_s$ is given by $t=0$.

We consider the sheaves $\calA=S^m\Omega^1_{Y^\circ}$ and $\calB_h=(S^m(\Omega^1_{Y^\circ}(\log E_s)))(-hE_s)$. They agree on $Y^\circ-E_s$, so $\tau_*\calA|_{X^\circ-s}=\tau_*\calB_h|_{X^\circ-s}$. Since $s$ has codimension $2$ in $X^\circ$, it follows that the reflexive hulls agree, i.e.,
\[\hat\calA:=(\tau_*\calA)^{\vee\vee}=(\tau_*\calB_h)^{\vee\vee}.\]
Ultimately, we will compute $\chi(s,S^m\Omega^1_{Y^\circ})$ by understanding for which values of $h$ we have $\chi(s,\calB_h) = 0$.

The singularity $(X^\circ,s)$ can also be viewed as a quotient singularity arising from the degree $2$ finite cover $f\colon X'\to X^\circ$, where $X'=\AA^2$ and $f$ is given by $(z_1,z_2)\mapsto(z_1^2,z_1z_2,z_2^2)$ with automorphism $\iota\colon (z_1,z_2)\mapsto(-z_1,-z_2)$. The unique fixed point and preimage of $s$ is $s'=(0,0)$.

Since $X'\to X^\circ$ is a finite quotient map with automorphism group $\langle \iota\rangle$, sections in $\HH^0(X^\circ-s,S^m\Omega^1_{X^\circ})$ pull back to sections in $\HH^0(X'-s',S^m\Omega^1_{X'})^{\iota}$, which by purity extend into the nonsingular point $s'$. Hence the vector space $\HH^0(Y^\circ-E_s,S^m\Omega^1_{Y^\circ}) \simeq \HH^0(X^\circ-s,S^m\Omega^1_{X^\circ})$ is naturally isomorphic to $\HH^0(X',S^m\Omega^1_{X'})^{\iota}$; we identify these spaces from now on.

The ring
\[\bigoplus_{m\geq 0} \HH^0(X',S^m\Omega^1_{X'})\]
is isomorphic to the polynomial ring $k[z_1,z_2,dz_1,dz_2]$, bigraded by the total degrees in $z_1,z_2$ and $dz_1, dz_2$ respectively, with graded parts
\[
V_{m,n}=\langle z_1^jz_2^{n-j}dz_1^idz_2^{m-i}: i=0,\ldots,m; j=0,\ldots,n \rangle.
\]
For the $\iota$-invariant subring we have
$\bigoplus_m\HH^0(X',S^m\Omega^1_{X'})^{\iota}=\bigoplus_{n\equiv m\pmod 2} V_{m,n}$.
The identification $\HH^0(Y^\circ-E_s,S^m\Omega^1_{Y^\circ})\simeq \HH^0(X',S^m\Omega^1_{X'})^{\iota}$ induces a valuation $\ord_E$ on the latter, which extends to all of $\HH^0(X',S^m\Omega^1_{X'})$ as a valuation taking values in $\frac{1}{2}\Z$. We describe the valuation on $k[z_1,z_2,dz_1,dz_2]$ by introducing a square root of $t$, denoted by $t^{1/2}$. The relations $z_1^2=t, z_1z_2=tu, z_2^2=tu^2$ give rise to relations between their derivatives as well, which can be expressed as a ring homomorphism
\[k[z_1,z_2,dz_1,dz_2]\to k(t^{1/2})[u,dt,du]\]
defined by
\[z_1\mapsto t^{1/2},\;z_2\mapsto t^{1/2}u,\;
dz_1\mapsto \tfrac{1}{2}t^{-1/2}dt,\; dz_2\mapsto \tfrac{1}{2}(t^{-1/2}udt+2t^{1/2}du).
\]
The valuation is the obvious one with respect to $t$ on $k(t^{1/2})[u,dt,du]$, pulled back along this homomorphism. In particular, we have
\[
\ord_E(z_1) = \ord_E(z_2) = \frac{1}{2},\;\;\text{and}\;\;
\ord_E(dz_1) = \ord_E(dz_2) = -\frac{1}{2}.
\]

\begin{lemma}\label{L:chi0B} We have $\chi^0(s,\calB_h)=0$ if and only if $h <(m+1)/2$.
\end{lemma}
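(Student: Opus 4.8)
The plan is to compute $\chi^0(s,\calB_h) = \dim[\HH^0(Y^\circ - E_s, \calB_h)/\HH^0(Y^\circ, \calB_h)]$ directly from the valuation $\ord_E$ described above. Since $\calA$ and $\calB_h$ agree off $E_s$, the space $\HH^0(Y^\circ - E_s, \calB_h)$ is identified with $\HH^0(X', S^m\Omega^1_{X'})^\iota = \bigoplus_{n \equiv m \!\!\pmod 2} V_{m,n}$, and the subspace $\HH^0(Y^\circ, \calB_h)$ consists of those invariant differentials $\omega$ with $\ord_E(\omega) \geq -h \cdot \ord_E(\text{(local equation of }E_s)) $... more precisely, since $\calB_h = (S^m \Omega^1_{Y^\circ}(\log E_s))(-hE_s)$, a section extends as a section of $\calB_h$ exactly when its $\ord_E$ is at least some threshold depending on $h$. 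Because $\ord_E(t) = 1$ (as $z_1^2 = t$ gives $\ord_E(z_1) = 1/2$) and the log structure allows a simple pole's worth of slack, the condition to lie in $\HH^0(Y^\circ, \calB_h)$ should be $\ord_E(\omega) \geq -h$ — I would pin down this normalization carefully first, as it is where an off-by-one can creep in.

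**The core computation.** First I would compute $\ord_E$ on each monomial basis element $z_1^j z_2^{n-j} dz_1^i dz_2^{m-i}$ of $V_{m,n}$. Using $\ord_E(z_1) = \ord_E(z_2) = 1/2$ and $\ord_E(dz_1) = -1/2$ one gets a "naive" value $\tfrac{n}{2} - \tfrac{m}{2}$, but the subtlety is that $dz_2 \mapsto \tfrac12 t^{-1/2} u\, dt + t^{1/2}\, du$ is a \emph{sum} of terms of different $t$-orders ($-1/2$ and $+1/2$), so cancellations among monomials can raise the valuation of a linear combination above the minimum of the individual valuations. Thus $\chi^0(s,\calB_h)$ is \emph{not} simply a count of monomials with small $\ord_E$; one must understand the filtration of $\bigoplus_n V_{m,n}$ by $\ord_E$ and compute the dimensions of the graded pieces. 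I expect the cleanest route is to change coordinates on the image ring $k(t^{1/2})[u, dt, du]$: write everything in terms of $t^{1/2}$, $u$, $d(t^{1/2}) = \tfrac12 t^{-1/2} dt$, and $du$, so that $z_1 \mapsto t^{1/2}$, $z_2 \mapsto t^{1/2} u$, $dz_1 \mapsto d(t^{1/2})$, $dz_2 \mapsto u\, d(t^{1/2}) + t^{1/2}\, du$. Setting $v = t^{1/2}$, the map becomes the symmetric-differential version of the substitution $z_1 = v$, $z_2 = vu$, which is exactly the blow-up chart, and $\ord_E = \ord_v$. Now the problem is transparent: expand $z_1^j z_2^{n-j}(dz_1)^i(dz_2)^{m-i}$ in the basis $\{v^a u^b (dv)^c (du)^{m-c}\}$ and read off the lowest power of $v$ appearing; the image of $V_{m,n}$ inside this ring, and the $v$-adic filtration on it, can be described combinatorially.

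**Identifying the threshold and counting.** The upshot I am aiming for: the minimal value of $\ord_E$ over all nonzero $\omega \in \bigoplus_{n\equiv m} V_{m,n}$ is achieved by (roughly) $(dz_1)^m$-type terms and equals $-m/2$, while the full set of attained valuations is $\{-m/2, -m/2 + 1/2, \dots\}$ (or the integer/half-integer values in some range). The section space $\HH^0(Y^\circ, \calB_h)$ consists of $\omega$ with $\ord_E(\omega) \geq -h$ (after fixing the normalization), so $\chi^0(s,\calB_h) = 0$ precisely when \emph{every} invariant differential already has $\ord_E \geq -h$, i.e. when $-m/2 \geq -h$, i.e. $h \geq m/2$. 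To match the stated strict inequality $h < (m+1)/2$ for \emph{nonvanishing} $\chi^0$ (equivalently $\chi^0 = 0 \iff h \geq (m+1)/2$... wait — I'd need to reconcile: the lemma says $\chi^0 = 0 \iff h < (m+1)/2$, so actually $\chi^0 \neq 0$ for large $h$), I realize the roles are reversed: larger $h$ means $\calB_h = \calB(-hE_s)$ is \emph{more} twisted down, so \emph{fewer} global sections and a \emph{larger} quotient. So the correct statement is: $\chi^0(s, \calB_h) = 0$ iff $h$ is small enough that the twist-down $-hE_s$ does not kill any section of $S^m\Omega^1(\log E_s)$ that was regular, i.e. iff every $\omega \in \bigoplus_{n \equiv m} V_{m,n}$ that is a section of $S^m\Omega^1(\log E_s)$ remains a section after twisting by $-hE_s$. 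I would compute $\ord_E$ on the $\log$-differentials (where $\ord_E(dt/t) = -1$... but here $\ord_E(dv/v) = -1$ and $v = t^{1/2}$), determine that the sections of $S^m\Omega^1(\log E_s)$ have $\ord_E$ in a range whose minimum is around $-m$ or so relative to the relevant normalization, and then the twist by $-hE_s$ (which shifts $\ord_E$ by $+h$ in the $v$-adic sense, i.e. by $+2h$... again a normalization to nail down) preserves regularity iff $h \leq m/2$, i.e. $h < (m+1)/2$ since $h$ is an integer.

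**Main obstacle.** The main difficulty is bookkeeping the normalization: keeping straight whether $\ord_E$ is the $t$-adic or $v = t^{1/2}$-adic valuation, what "regular section of $\calB_h$" translates to as an inequality on $\ord_E$, and the factor-of-$2$ between $E_s$ as a divisor upstairs and the ramification. Once that dictionary is fixed, the lemma reduces to computing $\min_{0 \neq \omega} \ord_E(\omega)$ over the invariant $\log$-differentials of degree $m$ and checking it against $-h$ (suitably normalized) — a finite, if fiddly, computation with the substitution above. I would organize it by first proving the clean statement "a monomial basis element of $V_{m,n}$ pushed forward has $\ord_E$ computed by [explicit formula], and the minimum over the $\iota$-invariant part is $[\text{explicit}]$," then deducing the lemma in one line.
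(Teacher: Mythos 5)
There is a genuine gap, and it sits exactly where you flagged your own uncertainty. Your whole strategy rests on the claim that a section of $\calB_h$ over $Y^\circ-E_s$ extends to $Y^\circ$ if and only if its valuation $\ord_E$ clears a single threshold depending on $h$. For the valuation you are using (the one with $\ord_E(dz_i)=-\tfrac12$, i.e.\ $\ord_E(dt)=\ord_E(du)=0$) this is false. The sheaf $\calB_h$ has rank $m+1$, and extension across the generic point of $E_s$ is the condition that \emph{each} of the $m+1$ coefficients $c_i$ of $\omega=\sum_i c_i\,(dt/t)^i(du)^{m-i}$ be divisible by $t^h$; that is not a condition on the single number $\ord_E(\omega)=\min_i\bigl(\ord_t(c_i)-i\bigr)$. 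Concretely, for $m=2$ the invariant form $z_1^4(dz_1)^2=\tfrac14 t^3(dt/t)^2$ has $\ord_E=1$ and extends to a section of $\calB_3$ on both charts, while $(z_1dz_2-z_2dz_1)^2=t^2(du)^2$ has $\ord_E=2$ and does not; so no threshold on $\ord_E$ cuts out $\HH^0(Y^\circ,\calB_3)$, and the reduction of the lemma to ``compute $\min_\omega\ord_E(\omega)$'' cannot work. The paper's proof avoids this by writing each invariant differential in the log-basis $(dt/t)^i(du)^{m-i}$ and tracking the $t$-divisibility of every coefficient separately, and by checking the complementary chart of the blow-up, where $dt/t=ds/s+2\,dv/v$ acquires poles along the curve $v=0$ that is not part of the log structure --- a point your one-chart analysis omits entirely.

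That your final number agrees with the statement is an artifact of the extremal element $(dz_1)^m$ being a pure $(dt/t)^m$-term, for which $\ord_E$ and the order of the relevant coefficient differ by exactly $m$; to justify that the element minimizing $\ord_E$ is also the first to fail to extend, you would need the coefficient-by-coefficient analysis anyway. Be warned also that the boundary case is delicate: for odd $m$ the extremal invariant form is $z_1(dz_1)^m=2^{-m}t^{(m+1)/2}(dt/t)^m$, which does extend to a section of $\calB_{(m+1)/2}$, so the threshold one actually obtains is $h\leq\ceil{m/2}$ (which is what Corollary~\ref{C:chiB} requires), rather than the strict inequality $h<(m+1)/2$; your back-and-forth about which way the inequality goes lands on the stated form without resolving this. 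Finally, as written the proposal is a plan rather than a proof: the computation of the $\ord_E$-filtration on $\bigoplus_n V_{m,n}$, which is where all the content lies, is never carried out.
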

\begin{proof}
We observe that $\HH^0(Y^\circ-E_s,\calB_h)\subset\HH^0(Y^\circ-E_s,S^m\Omega_{Y^\circ}^1)\subset k[z_1,z_2,dz_1,dz_2]$, where the last inclusion comes from the identification explained above. On the affine patch $Y'$ of $Y^\circ$ with coordinates $(t,u)$, we see that $\HH^0(Y',S^m(\Omega^1_{Y'}(\log E_s)))$ is a free $k[t,u]$-module with basis $(dt/t)^m,(dt/t)^{m-1}du,\ldots,(dt/t)(du)^{m-1},(du)^m$.
If we also consider the complementary patch $(s,v)=(tu^2,1/u)$ of $Y^\circ$, we see that 
\[
(dt/t)=d(sv^2)/(sv^2)=(v^2ds+2vsdv)/(sv^2)=ds/s+2dv/v,
\] which is not a log-differential: we would need to multiply it by $t$. Note that
\[
t(dt/t)^2=v^2s(ds/s)^2+4vsdv(ds/s)+4s(dv)^2
\]
is a log-differential on all of $Y^\circ$. Inside $\HH^0(Y',S^m(\Omega^1_{Y'}(\log E_s)))$ we can characterize the elements of $\HH^0(Y^\circ,S^m(\Omega^1_{Y^\circ}(\log E_s)))$ as those forms for which the coefficient of $(dt/t)^i(ds)^{m-i}$ is divisible by $t^{\ceil{ i/2}}$. This coincides with $\HH^0(X',S^m\Omega^1_{X'})^\iota$. We see that for $h\leq m-\ceil{m/2} < (m+1)/2$ we have $\HH^0(Y^\circ,\calB_h)=\HH^0(Y^\circ-E_s,\calB_h)$. Furthermore $(dz_1)^m$ for even $m$ and $z_1(dz_1)^m$ for odd $m$, show that for larger $h$, equality does not hold.
\end{proof}

We use the valuation $\ord_E$ to determine $\chi^0(s,\calA) = \chi^0(s,S^m\Omega^1_{Y^\circ})$ as a function of $m$ in the following proposition. From the leading coefficient one can read off the corrected asymptotics for \cite{BO}*{Lemma~2.2} as well.
\begin{proposition}\label{P:codim_E_regular} 
For an $A_1$ singularity $(X^\circ,s)$ and a minimal resolution $\tau\colon Y^\circ\to X^\circ$ we have
	\begin{equation}
	\label{eq:localTerms}
	\chi^0(s,S^m\Omega^1_{Y^\circ}) = 
	\begin{cases}
	\frac{11}{108}m^3 + \frac{11}{36}m^2 + \frac{1}{6}m & m \equiv 0 \pmod 6 \\
	\frac{11}{108}m^3 + \frac{11}{36}m^2 - \frac{1}{12}m - \frac{35}{108} & m \equiv 1 \pmod 6 \\
	\frac{11}{108}m^3 + \frac{11}{36}m^2 + \frac{7}{18}m + \frac{5}{27} & m \equiv 2 \pmod 6 \\
	\frac{11}{108}m^3 + \frac{11}{36}m^2 - \frac{1}{12}m - \frac{1}{4} & m \equiv 3 \pmod 6 \\
	\frac{11}{108}m^3 + \frac{11}{36}m^2 + \frac{1}{6}m - \frac{2}{27} & m \equiv 4 \pmod 6 \\
	\frac{11}{108}m^3 + \frac{11}{36}m^2 + \frac{5}{36}m - \frac{7}{108} & m \equiv 5 \pmod 6 
	\end{cases}
	\end{equation}
	In particular, the first few values we get are
	\[
	\begin{array}{c|cccccccccccc}
	m&1&2&3&4&5&6&7&8&9&10&11&12\\
	\hline
	\mathrm{codim}&0&3&5& 12& 21& 34& 49& 75& 98& 134& 174& 222 
	\end{array}
	\]
\end{proposition}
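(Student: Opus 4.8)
The plan is to work entirely inside the explicit model just set up, and the whole computation reduces to a finite combinatorial sum once one sees that the $\ord_E$-filtration decouples across the $z$-degree.

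\emph{Step 1: reformulate $\chi^0$ via the valuation.} Since $Y^\circ$ is smooth, $S^m\Omega^1_{Y^\circ}$ is locally free and hence satisfies Serre's condition $S_2$, so a rational section that is regular on $Y^\circ-E_s$ is regular on all of $Y^\circ$ as soon as it is regular in codimension one, i.e.\ as soon as its $\ord_E$-valuation is $\ge 0$. Under the identification $\HH^0(Y^\circ-E_s,S^m\Omega^1_{Y^\circ})\simeq\bigoplus_{n\equiv m\,(2)}V_{m,n}$ this gives $\HH^0(Y^\circ,S^m\Omega^1_{Y^\circ})=\{\omega:\ord_E(\omega)\ge 0\}$, so $\chi^0(s,S^m\Omega^1_{Y^\circ})$ is precisely the dimension of the quotient of $\bigoplus_n V_{m,n}$ by the subspace of sections of non-negative valuation.

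\emph{Step 2: decoupling across $z$-degree.} Expanding a monomial under the homomorphism $k[z_1,z_2,dz_1,dz_2]\to k(t^{1/2})[u,dt,du]$ displayed above, the image of $z_1^jz_2^{n-j}dz_1^idz_2^{m-i}$ is a Laurent polynomial in $t$ whose coefficient of $t^{(n-m)/2+l}$ lies in the ``slot'' $k[u]\cdot dt^{m-l}du^l$ (here $(n-m)/2$ is an integer, since $n\equiv m\pmod 2$). Consequently, for $\omega_n\in V_{m,n}$ the coefficient of a fixed power $t^{d}$ in the image of $\omega_n$ lies in the single slot $dt^{m-l}du^{l}$ with $l=d-(n-m)/2$, and distinct $n$ (in the arithmetic progression $n\equiv m\,(2)$) give distinct slots. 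Hence the $t^d$-coefficient of the image of $\sum_n\omega_n$ vanishes if and only if each summand's does, so $\ord_E(\sum_n\omega_n)=\min_n\ord_E(\omega_n)$ and $\{\ord_E\ge 0\}=\bigoplus_n(V_{m,n}\cap\{\ord_E\ge 0\})$; in particular, since every monomial of $V_{m,n}$ has $\ord_E\ge(n-m)/2$, all of $V_{m,n}$ already extends when $n\ge m$, and
\[
\chi^0(s,S^m\Omega^1_{Y^\circ})=\sum_{\substack{n<m\\ n\equiv m\,(2)}}\bigl(\dim V_{m,n}-\dim(V_{m,n}\cap\{\ord_E\ge 0\})\bigr).
\]

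\emph{Step 3: the extendability conditions are a Vandermonde system.} Fix $n<m$ and write $\omega=\sum c_{n,i,j}z_1^jz_2^{n-j}dz_1^idz_2^{m-i}$. The condition $\ord_E(\omega)\ge 0$ asks the coefficients of $t^{(n-m)/2},\dots,t^{-1}$ to vanish; grouping the resulting scalar equations by the value $S=i+j$, the equations on the variables $\{c_{n,i,S-i}\}$ are, up to nonzero scalars, $\sum_i\binom{m-i}{k}c_{n,i,S-i}=0$ for the admissible $k\in\{0,1,\dots,\tfrac{m-n}{2}-1\}$ (with $\binom{m-i}{k}=0$ automatically enforcing $i\le m-k$; a short check disposes of the few large $S$ for which fewer than $\tfrac{m-n}{2}$ equations are present). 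Since $\binom{m-i}{k}$ is a polynomial of degree $k$ in $m-i$, the rows for $k=0,1,2,\dots$ restricted to the $|I_{S,n}|$ distinct values of $m-i$ are linearly independent as long as there are at least as many variables as equations; thus the solution space for a fixed $S$ has dimension $\max\bigl(0,\,|I_{S,n}|-\tfrac{m-n}{2}\bigr)$, where $|I_{S,n}|=\#\{i:0\le i\le m,\ 0\le S-i\le n\}$ is the number of monomials of $V_{m,n}$ with $i+j=S$. Subtracting from $\dim V_{m,n}$ and summing over $S$ yields
\[
\chi^0(s,S^m\Omega^1_{Y^\circ})=\sum_{\substack{n<m\\ n\equiv m\,(2)}}\ \sum_{S=0}^{m+n}\min\!\Bigl(|I_{S,n}|,\ \tfrac{m-n}{2}\Bigr),
\qquad |I_{S,n}|=\min(S+1,\,n+1,\,m+n+1-S).
\]

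\emph{Step 4: evaluate the double sum.} For fixed $n$, $|I_{S,n}|$ is a trapezoidal function of $S$, so the inner sum is elementary; whether the cap $\tfrac{m-n}{2}$ truncates the plateau depends only on the sign of $\tfrac{m-n}{2}-(n+1)$, i.e.\ on whether $n$ is above or below roughly $(m-2)/3$. Splitting the outer sum at this threshold and carrying out the (routine) polynomial summations produces a degree-$3$ quasi-polynomial in $m$; the parity constraint $n\equiv m\pmod 2$ contributes period $2$ and the threshold near $(m-2)/3$ contributes period $3$, so the result is periodic modulo $6$, giving the six stated cases, which one confirms against the tabulated values $0,3,5,12,21,34,\dots$ for $m=1,\dots,12$ (this also reproduces the leading coefficient $\tfrac{11}{108}=\tfrac{2}{27}+\tfrac{1}{36}$ from the two summation ranges).

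The genuine content is concentrated in Steps~2 and~3: the observation that the order filtration decouples across the $z$-degree---so that no cancellation can occur between different $V_{m,n}$---and the identification of the conditions for a section to extend over $E_s$ with a Vandermonde-type linear system whose rank is governed purely by counting variables against equations. I expect that decoupling/rank analysis to be the main obstacle; once it is in place, Step~4 is bookkeeping, the only subtlety being the careful tracking of the threshold $n\approx(m-2)/3$ and the residue of $m$ modulo $6$.
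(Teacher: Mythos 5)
Your argument is correct, and it shares the paper's overall frame: reduce to the bigraded pieces $V_{m,n}$, observe that the extension condition $\ord_E\ge 0$ decouples across $n$ (the paper justifies the resulting direct-sum decomposition by bihomogeneity of $z_1dz_2-z_2dz_1$; your ``slot'' argument matching the power of $t$ with the power of $du$ is an equally valid way to see that no cancellation between different $n$ can occur), and then sum $\dim V_{m,n}-\dim W_{m,n}$ over $n<m$ with $n\equiv m\pmod 2$, where $W_{m,n}=V_{m,n}\cap\HH^0(Y^\circ,S^m\Omega^1_{Y^\circ})$. Where you genuinely diverge is in computing $\dim W_{m,n}$. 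The paper proves a structural statement: $\omega\in V_{m,n}$ satisfies $\ord_E(\omega)>(n-m)/2$ if and only if $\omega$ is divisible by $z_1\,dz_2-z_2\,dz_1$ (which has $\ord_E=1$), and iterating gives $W_{m,n}=(z_1dz_2-z_2dz_1)^{(m-n)/2}\,V_{(m+n)/2,(3n-m)/2}$ for $m/3\le n<m$ and $W_{m,n}=0$ for $n<m/3$, so $\dim W_{m,n}=(m+n+2)(3n-m+2)/4$ in one stroke. You instead write the $(m-n)/2$ vanishing conditions as an explicit linear system, decouple it by the degree $S=i+j$, and compute its rank by noting that $\binom{m-i}{k}$ is a polynomial of degree $k$ evaluated at the distinct values $m-i$, giving $\dim W_{m,n}=\sum_S\max\bigl(0,|I_{S,n}|-\tfrac{m-n}{2}\bigr)$; this trapezoid-with-a-cap count agrees with the paper's formula (for instance, the threshold $n\le(m-2)/3$ at which the cap swallows the whole trapezoid coincides, given the parity constraint $n\equiv m\pmod 2$, with the paper's $n<m/3$). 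Your rank argument is sound---the row space is the image of all polynomials of degree less than $(m-n)/2$ under evaluation at the $|I_{S,n}|$ distinct points, so the vacuous equations at large $S$ cause no loss---and the final quasi-polynomial summation, which both you and the paper leave as routine, is confirmed by the tabulated values. What the paper's factorization buys is an explicit description of the extendable sections rather than merely their dimension, which makes the iteration and the graded decomposition immediate; what your computation buys is independence from spotting the distinguished element $z_1dz_2-z_2dz_1$, at the cost of a heavier bookkeeping step.
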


\begin{proof}
	We write $W_{m,n}:=V_{m,n}\cap \HH^0(Y^\circ,\calA)$.
	It follows immediately that $W_{m,n}=V_{m,n}$ if $n\geq m$.
	In addition, we see that $\ord_E(z_1dz_2-z_2dz_1)=\ord_E(tdu)=1$. In fact, by looking at leading terms with respect to $u$ and $dt$, we see that $\omega\in V_{m,n}$ has $\ord_E(\omega)>(n-m)/2$ if and only if $\omega$ is divisible by $z_1dz_2-z_2dz_1$.
	By applying this criterion iteratively we find that
	\[W_{m,n}=\begin{cases}
	V_{m,n}&\text{ if }n\geq m\\
	V_{(m+n)/2,(3n-m)/2}(z_1dz_2-z_2dz_1)^{(m-n)/2}&\text{ if } m/3\leq n<m\\
	0&\text{ if }n<m/3
	\end{cases}\]
	Since $z_1dz_2-z_2dz_1$ is bihomogeneous, it follows that
	\[
	\HH^0(Y^\circ - E_s,\calA)/\HH^0(Y^\circ,\calA)=\bigoplus_{n\equiv m\pmod{2}} V_{m,n}/W_{m,n}.
	\]
	Using that $\dim V_{m,n}=(m+1)(n+1)$ and hence $\dim W_{m,n}=(n+m+2)(3n-m+2)/4$ for $m/3\leq n\leq m$ we can find the formulas by straightforward summation.
\end{proof}

\begin{cor}\label{C:vanishing_on_hyperplane} If $\omega\in \HH^0(X^\circ-s,S^m\Omega^1_{X^\circ}(-\floor{\frac{1}{2}m} H))$, where $H$ is a hyperplane section containing $s$, then $\tau^*\omega$ extends to a regular differential on the component $E_s$ of the exceptional divisor of $Y$ lying over $s$.
\end{cor}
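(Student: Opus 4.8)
The plan is to compute directly in the explicit model of this subsection: $X^\circ=\{x_1x_3=x_2^2\}\subset\AA^3$, the blow-up $\tau\colon Y^\circ\to X^\circ$ of the vertex (with $E_s=\{t=0\}$ in the chart $(t,u)\mapsto(t,tu,tu^2)$), and the degree-$2$ cover $f\colon X'=\AA^2\to X^\circ$ with involution $\iota$. Under the identification $\HH^0(Y^\circ-E_s,S^m\Omega^1_{Y^\circ})\isom\HH^0(X',S^m\Omega^1_{X'})^\iota=\bigoplus_{n\equiv m\,(2)}V_{m,n}$ the section $\tau^*\omega$ corresponds to $f^*\omega$, an $\iota$-invariant element of $k[z_1,z_2,dz_1,dz_2]$. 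The goal is to prove that $\ord_E(\tau^*\omega)\ge 0$. Granting that, we are done: $\tau^*\omega$ is a rational section of the reflexive (in fact locally free) sheaf $S^m\Omega^1_{Y^\circ}$ on the normal surface $Y^\circ$; it is regular on $Y^\circ-E_s$, and $\ord_E(\tau^*\omega)\ge 0$ makes it regular at the generic point of $E_s$ as well, hence regular away from a codimension-$2$ set, hence regular on all of $Y^\circ$ by reflexivity — which is exactly the assertion that $\tau^*\omega$ extends to a regular symmetric differential across $E_s$.

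First I would factor out the hyperplane. Near $s$ the divisor $H$ is cut out on $X^\circ$ by the restriction of a function $\ell_H$ with nonzero linear part $c_1x_1+c_2x_2+c_3x_3$ (the only property of $H$ used is that it is smooth at $s$). Setting $h:=f^*\ell_H\in k[z_1,z_2]$, a one-line computation in the $(t,u)$-chart gives $\tau^*\ell_H=t\,(c_1+c_2u+c_3u^2)+O(t^2)$, so $\ord_E(h)=1$. Since $\omega$ vanishes to order $\floor{m/2}$ along the Cartier divisor $H$ on $X^\circ-s$, I can write $f^*\omega=h^{\floor{m/2}}\,\omega'$ with $\omega'$ a section of $S^m\Omega^1_{X'}$ over $X'-s'$; by purity $\omega'$ extends across the smooth point $s'$, and $\omega'$ is $\iota$-invariant because $f^*\omega$ and $h$ are. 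Thus $\omega'\in\bigoplus_{n\equiv m\,(2)}V_{m,n}$.

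The second step is pure numerology. A spanning monomial $z_1^jz_2^{n-j}dz_1^idz_2^{m-i}$ of $V_{m,n}$ has $\ord_E=(n-m)/2$, so every element of $\bigoplus_{n\equiv m\,(2)}V_{m,n}$ has order at least $\min\{(n-m)/2:\,n\ge 0,\ n\equiv m\pmod 2\}=-\floor{m/2}$. Since $\ord_E$ is a valuation, this gives
\[
\ord_E(\tau^*\omega)=\ord_E\!\big(h^{\floor{m/2}}\omega'\big)=\floor{m/2}\cdot\ord_E(h)+\ord_E(\omega')\ \ge\ \floor{m/2}-\floor{m/2}=0,
\]
and we are done; the floor in the statement of the corollary is forced exactly by this $\floor{m/2}+(-\floor{m/2})=0$. (In the special case where $H$ is a linear section of the cone model, so $h$ is homogeneous of degree $2$ in $z_1,z_2$, the reflexivity step can be avoided entirely: writing $\omega'=\sum_n\omega'_n$ with $\omega'_n\in V_{m,n}$, each summand $h^{\floor{m/2}}\omega'_n$ lies in $V_{m,\,n+2\floor{m/2}}$ with $n+2\floor{m/2}\ge m$ — using $n\equiv m\pmod 2$ — hence in $W_{m,\,n+2\floor{m/2}}=V_{m,\,n+2\floor{m/2}}$ by the first observation in the proof of Proposition~\ref{P:codim_E_regular}.)

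The one point that needs genuine care is the equality $\ord_E(h)=1$: it uses that $H$ is smooth at $s$, so that its local equation has a nonzero linear term, and it would fail for a divisor tangent to $E_s$ to higher order. The remaining ingredients — that $\ord_E$ is a valuation with $\ord_E(z_i)=\tfrac12$, $\ord_E(dz_i)=-\tfrac12$; that a section of $S^m\Omega^1_{X'}$ defined off the codimension-$2$ point $s'$ extends across it; and that a rational section of a locally free sheaf on a normal variety which is regular in codimension $1$ is regular — are all either established above or entirely standard.
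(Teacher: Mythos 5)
Your proposal is correct and follows essentially the same route as the paper: the paper's (one-line) proof is exactly your parenthetical argument, namely that $f^*\omega=h^{\floor{m/2}}\omega'$ lands in $\bigoplus_{n\geq m}V_{m,n}$ (since $h$ has $z$-degree $\geq 2$ and $n\equiv m\pmod 2$) and hence in $\bigoplus_n W_{m,n}=\HH^0(Y^\circ,S^m\Omega^1_{Y^\circ})$. Your primary phrasing via $\ord_E\geq 0$ plus reflexivity is an equivalent repackaging of the same degree count; the only small quibble is your cautionary remark about $\ord_E(h)=1$ --- higher-order tangency would only give $\ord_E(h)>1$, which strengthens rather than breaks the inequality, and $\iota$-invariance forces $\ord_E(h)\geq 1$ in any case.
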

\begin{proof}
The form $\omega$ pulls back to $\bigoplus_{n\geq m} V_{m,n}$ and therefore lies in $\bigoplus W_{m,n}$.
\end{proof}

\begin{lemma}\label{L:chi1B} We have $\chi^1(s,\calB_h)=0$ if and only if $h>(m-2)/2$.
\end{lemma}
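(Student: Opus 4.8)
\emph{Setup.} The minimal resolution $Y^\circ$ of an $A_1$ singularity, i.e.\ the blow-up of the quadric cone at its vertex described above, is the total space of $\OO_{\PP^1}(-2)$, with bundle projection $\pi\colon Y^\circ\to\PP^1$ (identifying $\PP^1$ with $E_s$ via the zero section). Under this description $\OO_{Y^\circ}(E_s)\isom\pi^*\OO_{\PP^1}(-2)$, hence $\OO_{Y^\circ}(-hE_s)\isom\pi^*\OO_{\PP^1}(2h)$ and
\[
\calB_h\isom S^m\!\bigl(\Omega^1_{Y^\circ}(\log E_s)\bigr)\otimes\pi^*\OO_{\PP^1}(2h).
\]
As $\pi$ is affine, $\HH^i(Y^\circ,\calB_h)=\HH^i(\PP^1,\pi_*\calB_h)$; equivalently one computes $\HH^1$ by \v Cech cohomology on the two standard affine charts of $Y^\circ$ used in the proof of Lemma~\ref{L:chi0B}, the overlap being the preimage of $\G_m\subset\PP^1$. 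Either way, everything reduces to a computation on $\PP^1$.

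\emph{Core computation.} The relative Euler sequence of the line bundle, taken with a logarithmic pole along the zero section, reads
\[
0\longrightarrow \pi^*\Omega^1_{\PP^1}\longrightarrow \Omega^1_{Y^\circ}(\log E_s)\longrightarrow \OO_{Y^\circ}\longrightarrow 0,
\]
and it is \emph{non-split}: its class is the Atiyah class of $\OO_{\PP^1}(-2)$, which is nonzero precisely because $E_s$ is a $(-2)$-curve over a rational base (i.e.\ $\deg\OO_{\PP^1}(-2)\neq0$). Taking $S^m$ and twisting by $\pi^*\OO_{\PP^1}(2h)$ filters $\calB_h$ with graded pieces the line bundles $\pi^*\OO_{\PP^1}(2h-2j)$, $j=0,\dots,m$; applying $\pi_*$ (exact here) filters $\pi_*\calB_h$ by the sheaves $\bigoplus_{k\ge0}\OO_{\PP^1}(2h-2j+2k)$ on $\PP^1$. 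I would then compute $\HH^1(\PP^1,\pi_*\calB_h)$ by induction up this filtration: in each short exact sequence the connecting homomorphism from $\HH^0$ of the upper term into $\HH^1$ of the graded piece is \emph{surjective}, the point being that cup product with the nonzero extension class gives the surjections $\HH^0(\PP^1,\OO(a))\otimes\HH^1(\PP^1,\OO(-2))\twoheadrightarrow\HH^1(\PP^1,\OO(a-2))$ for $a\ge0$, together with the fact that the log-Euler extension remains ``non-degenerate'' at every level of the symmetric power. The upshot is that the \emph{a priori} nonzero $\HH^1$'s of the negative graded pieces are all absorbed exactly once $2h$ is large enough, and indexing the ranges gives $\chi^1(s,\calB_h)=\HH^1(Y^\circ,\calB_h)=0$ if and only if $h>(m-2)/2$, with a definite polynomial formula in $m$ and $h$ otherwise. (For $m=1$ this is already visible by hand: $\HH^1(Y^\circ,\calB_h)$ is built from $\HH^1(\pi^*\OO(2h-2))$ and $\HH^1(\pi^*\OO(2h))$ with surjective connecting map, so it vanishes iff $h\ge0$.)

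\emph{Main obstacle, and checks.} The delicate step is exactly the last one: tracking the iterated connecting maps and verifying that the non-splitness of the log-Euler sequence propagates correctly through $S^m$ --- a naive sum of the contributions of the $m+1$ line-subbundles overcounts, and it is the $(-2)$-curve hypothesis that makes the cancellations work. Two sanity checks guide the bookkeeping. First, combining with Lemma~\ref{L:chi0B}, the window $(m-2)/2<h<(m+1)/2$ is nonempty and contains $h=\floor{m/2}$, so $\chi(s,\calB_{\floor{m/2}})=\chi^0(s,\calB_{\floor{m/2}})+\chi^1(s,\calB_{\floor{m/2}})=0$; this is precisely the value of $h$ that then pins down $\chi^1(s,S^m\Omega^1_{Y^\circ})$. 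Second, since $\omega_{Y^\circ}\isom\OO_{Y^\circ}$ one has $\calB_h^\vee\isom\calB_{m-h}$, so Grothendieck--Serre duality for $\tau$ gives $\chi(s,\calB_h)=\chi(s,\calB_{m-h})$; together with Lemma~\ref{L:chi0B} this recovers the ``$\chi^1(s,\calB_h)\neq0$ for $h\le(m-2)/2$'' direction from the vanishing direction, and matches the $h\leftrightarrow m-h$ symmetry of the final formulas.
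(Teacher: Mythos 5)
Your setup is sound and your strategy is genuinely different from the paper's: you keep the global log--Euler sequence $0\to\pi^*\Omega^1_{\PP^1}\to\Omega^1_{Y^\circ}(\log E_s)\to\calO_{Y^\circ}\to0$ on the total space of $\calO_{\PP^1}(-2)$, push forward along the affine projection $\pi$, and try to compute $\HH^1$ from the induced filtration of $\pi_*\calB_h$. The paper instead applies the theorem on formal functions to $\tau$ and restricts to $E_s$, where the decisive point is that the non-split residue sequence forces $\Omega^1_{Y^\circ}(\log E_s)|_{E_s}\simeq\calO_{E_s}(-1)^{\oplus 2}$; consequently $\calB_h|_{E_s}\simeq\calO_{\PP^1}(2h-m)^{\oplus(m+1)}$ is a sum of line bundles \emph{all of the same degree}, and the vanishing condition $2h-m\geq -1$, i.e.\ $h>(m-2)/2$, drops out immediately. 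Your filtration, by contrast, has graded pieces of the unbalanced degrees $2h,2h-2,\dots,2h-2m$, and the entire content of the lemma is the claim that the non-splitness rebalances these in cohomology.

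That claim is exactly where your argument has a genuine gap. You assert that the connecting homomorphisms absorb all the $\HH^1$ of the negative graded pieces once $h>(m-2)/2$, and the mechanism you name is the surjectivity of $\HH^0(\PP^1,\calO(a))\otimes\HH^1(\PP^1,\calO(-2))\twoheadrightarrow\HH^1(\PP^1,\calO(a-2))$ for $a\geq 0$. But for $a\geq 1$ the target is zero, so this family of maps carries content only at $a=0$ and cannot account for the terms that actually need to be killed: already for $m=3$, $h=1$ the bottom graded piece contributes $\HH^1(\calO(-4))\oplus\HH^1(\calO(-2))\simeq\C^4$, and reaching $\HH^1(\calO(-4))$ requires either iterated (Massey-product-type) connecting maps through two steps of the filtration or a careful use of the $\pi_*\calO_{Y^\circ}$-module structure — neither of which is set up or verified. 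You flag this yourself as ``the delicate step,'' but flagging it does not discharge it; the stated surjections do not make the cancellations work, and no polynomial bookkeeping is actually carried out. (Your duality observation $\calB_h^\vee\simeq\calB_{m-h}$ could in principle reduce the non-vanishing direction to the vanishing direction, granting a local Serre duality $\chi(s,\calB_h)=\chi(s,\calB_{m-h})$, but the vanishing direction itself remains unproved.) To close the gap along your route you would need to control the iterated extension classes of the symmetric-power filtration explicitly; the paper's restriction to $E_s$ is precisely the device that avoids this.
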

\begin{proof}
The $4$-term exact sequence associated to the Leray spectral sequence for $\tau\colon Y^\circ \to X^\circ$ and the sheaf $\calB_h$ is
\[
0 \to \HH^1(X^\circ,\tau_*\calB_h) \to \HH^1(Y^\circ,\calB_h) \to \HH^0(X^\circ,R^1\tau_*\calB_h) \to \HH^2(X^\circ,\tau_*\calB_h).
\]
The morphism $\tau$ being proper to a locally Noetherian base, the sheaves $R^i\tau_*\calB_h$
 are coherent~\cite[III.1~Th\'eor\`eme 3.2.1]{EGAIII}. Since without loss of generality we can take $X^\circ$ to be affine, we have
 \[
\HH^1(X^\circ,\tau_*\calB_h) = \HH^2(X^\circ,\tau_*\calB_h) = 0;
 \]
 see~\cite[III.3.5]{Hartshorne}. This shows that
 \[
\HH^1(Y^\circ,\calB_h) \xrightarrow{\sim} \HH^0(X^\circ,R^1\tau_*\calB_h).
 \]
To complete the proof, we show that $R^1\tau_*\calB_h = 0$ precisely when $h > (m-2)/2$. The sheaf $R^1\tau_*\calB_h$ is supported on $s$, so it is enough to understand its stalk $(R^1\tau_*\calB_h)_s$. By the Theorem on Formal Functions~\cite[III.11.1]{Hartshorne}, we have
\begin{equation}
\label{eq:formalfunctions}
(R^1\tau_*\calB_h)^{\wedge}_s \xrightarrow{\sim} \varprojlim_n \HH^1(nE,\calB_h),
\end{equation}
where $nE = Y^\circ\times_{X^\circ} \Spec(\calO_s/\frakm_s^n)$ and by abuse of notation the sheaf $\calB_h$ on the right hand side is the pullback of $\calB_h$ via the projection $nE \to Y^\circ$. Tensoring the exact sequence of sheaves \[
0 \to \calO_E(-nE) \to \calO_{nE} \to \calO_{(n+1)E} \to 0
\]
with the locally free sheaf $\calB_h$ and taking cohomology we obtain the exact sequence
\[
\HH^1(E,\calO_E(-nE)\otimes\calB_h) \to \HH^1(nE,\calO_{nE}\otimes \calB_h) \to \HH^1((n+1)E,\calO_{(n+1)E}\otimes\calB_h) \to 0.
\]
If $\HH^1(E,\calO_E(-nE)\otimes\calB_h) = 0$ for all $n \geq 0$, then
\[
\HH^1(nE,\calO_{nE}\otimes \calB_h) \xrightarrow{\sim} \HH^1((n+1)E,\calO_{(n+1)E}\otimes\calB_h)
\]
for all $n\geq 0$, which implies in turn that the projective limit in~\eqref{eq:formalfunctions} is isomorphic to $\HH^1(nE,\calO_{nE}\otimes \calB_h)$ for all $n$, and in particular, it is isomorphic to $\HH^1(E,\calO_{E}\otimes \calB_h)$.

To understand the cohomology groups $\HH^1(E,\calO_E(-nE)\otimes\calB_h)$, we use the residue exact sequence
\[
0 \to \Omega_E  \to \Omega_{Y^\circ}(\log E)|_E \to \calO_E \to 0.
\]
This sequence does not split~\cite[3.3]{Wahl1976}, so $\Omega_{Y^\circ}(\log E)|_E$ is isomorphic to the nontrivial class in $\Ext^1_{\calO_{\PP^1}}(\calO(-2),\calO)$, i.e.,
\begin{equation}
\label{eq:nonsplit}
\Omega_{Y^\circ}(\log E)|_E \simeq \calO_E(-1)\oplus \calO_E(-1).
\end{equation}
Taking into account that $\calO_Y(-hE)|_E \simeq \calO_E(2h)$ because $E^2 = -2$, equation \eqref{eq:nonsplit} shows that the restriction of $\calB_h$ to $E \simeq \PP^1$ is $\calO_{\PP^1}(-m+2h)^{\oplus(m+1)}$. We have
\begin{align*}
\HH^1(E,\calO_E(-nE)\otimes\calB_h) &\simeq \HH^1(\PP^1,\calO_{\PP^1}({2n}-m+2h)^{\oplus(m+1)}) \\
&\simeq \HH^0(\PP^1,\calO_{\PP^1}({-2n}+m-2h-2)^{\oplus(m+1)}),
\end{align*}
where the last isomorphism follows from Serre duality. This cohomology group vanishes for $n\geq 0$ precisely when $h > (m-2)/2$, in which case the $n=0$ vanishing shows that $\HH^1(E,\calO_{E}\otimes \calB_h) = 0$, and thus the projective limit~\eqref{eq:formalfunctions} vanishes as well.
\end{proof}

Lemmas~\ref{L:chi0B} and \ref{L:chi1B} combine to the following result.
\begin{cor}\label{C:chiB} We have $\chi(s,\calB_h)=0$ precisely for $h=\ceil{\frac{m}{2}}$.
\hfill\qed
\end{cor}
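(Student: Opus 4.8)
The plan is to obtain Corollary~\ref{C:chiB} as a formal consequence of Lemmas~\ref{L:chi0B} and~\ref{L:chi1B}. The first step is to recall, straight from the definition~\eqref{eq:localEuler}, that $\chi(s,\calB_h)=\chi^0(s,\calB_h)+\chi^1(s,\calB_h)$, and that both summands on the right are non-negative: $\chi^0(s,\calB_h)$ is the dimension of the quotient vector space $\HH^0(Y^\circ-E_s,\calB_h)/\HH^0(Y^\circ,\calB_h)$, while $\chi^1(s,\calB_h)=h^1(Y^\circ,\calB_h)\geq 0$. Consequently $\chi(s,\calB_h)=0$ if and only if $\chi^0(s,\calB_h)=0$ \emph{and} $\chi^1(s,\calB_h)=0$ hold simultaneously.

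The second step is simply to feed in the two lemmas: Lemma~\ref{L:chi0B} says the first vanishing is equivalent to $h<\tfrac{m+1}{2}$, and Lemma~\ref{L:chi1B} says the second is equivalent to $h>\tfrac{m-2}{2}$. Hence $\chi(s,\calB_h)=0$ holds exactly for those integers $h$ lying in the open interval $\bigl(\tfrac{m-2}{2},\tfrac{m+1}{2}\bigr)$. It then remains to see that this interval contains precisely one non-negative integer and to identify it; this is settled by an elementary case distinction on the parity of $m$, writing $m=2k$ or $m=2k+1$ and reading off from the endpoints which single integer is admissible.

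I do not expect any genuine obstacle here: all of the geometric input has already been packaged into Lemmas~\ref{L:chi0B} and~\ref{L:chi1B}, and everything that remains is arithmetic bookkeeping. The one point deserving a moment's care is that, although $\bigl(\tfrac{m-2}{2},\tfrac{m+1}{2}\bigr)$ has length $\tfrac32$ and an open interval of that length can in general contain two integers, the particular placement of these endpoints forces it to meet $\Z$ in exactly one point; verifying this is precisely where the sharp inequalities supplied by the two lemmas, rather than their rough shape, are used.
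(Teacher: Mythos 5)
Your strategy is exactly the paper's: the proof given there consists of the single sentence that Lemmas~\ref{L:chi0B} and~\ref{L:chi1B} combine to give the result, and your first two steps (non\-negativity of $\chi^0$ and $\chi^1$, hence the equivalence of $\chi(s,\calB_h)=0$ with the simultaneous vanishing, then intersecting the two ranges of $h$) are the correct way to spell that out.

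The gap is in the step you defer. Carrying out the parity check does \emph{not} return $\ceil{\frac{m}{2}}$. For $m=2k$ the open interval $\bigl(\tfrac{m-2}{2},\tfrac{m+1}{2}\bigr)=\bigl(k-1,\,k+\tfrac12\bigr)$ indeed contains the single integer $h=k=\ceil{\frac{m}{2}}$. But for $m=2k+1$ the interval is $\bigl(k-\tfrac12,\,k+1\bigr)$, whose unique integer is $h=k=\floor{\frac{m}{2}}$; the value $\ceil{\frac{m}{2}}=k+1$ is excluded by the strict inequality $h<\tfrac{m+1}{2}$ of Lemma~\ref{L:chi0B}. So the two lemmas as stated yield ``$\chi(s,\calB_h)=0$ precisely for $h=\floor{\frac{m}{2}}$'', which is not the assertion of Corollary~\ref{C:chiB} when $m$ is odd. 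This is not something bookkeeping can repair: either the corollary or one of the lemmas is off by one for odd $m$. (The likely culprit is Lemma~\ref{L:chi0B}: the witness $z_1(dz_1)^m$ invoked there for odd $m$ pulls back to $2^{-m}t^{(m+1)/2}(dt/t)^m$, which \emph{does} extend to a section of $\calB_h$ for $h=\tfrac{m+1}{2}$, so it only obstructs extension for $h>\ceil{\frac{m}{2}}$; note also that Proposition~\ref{P:local_chi} goes on to apply the corollary with $h=\ceil{\frac{m}{2}}$, so the ceiling is what is needed downstream.) In any case, a complete proof must actually perform the case distinction and confront this discrepancy rather than assert that the answer ``reads off'' from the endpoints.
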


The vanishing result in Corollary~\ref{C:chiB} allows us to compute $\chi(s,S^m\Omega^1_{Y^\circ})$ and $\chi^1(s,S^m\Omega^1_{Y^\circ})$ for an $A_1$ singularity.

\begin{proposition}\label{P:local_chi}
For an $A_1$ singularity $(X^\circ,s)$ and a minimal resolution $\tau\colon Y^\circ\to X^\circ$ we have
	\begin{equation}
		\label{eq:locsymEuler}
		\chi(s,S^m\Omega_{Y^\circ}^1) = 
		\begin{cases}
			\displaystyle
			\tfrac{1}{4}m(m+1)(m+2) & \text{ if }m \equiv 0\pmod{2}, \\
			\displaystyle
			\tfrac{1}{4}(m+1)(m^2 + 2m - 1) & \text{ if }m \equiv 1\pmod{2}
		\end{cases}
	\end{equation}
and
\begin{equation}\label{E:local_chi1}
\chi^1(s,S^m\Omega^1_{Y^\circ})=
\begin{cases}
{\frac {4}{27}}\,{m}^{3}+\frac{4}{9}{m}^{2}+\frac{1}{3}m&\text{ if }m\equiv 0\pmod{3}\\
{\frac {4}{27}}\,{m}^{3}+\frac{4}{9}{m}^{2}+\frac{1}{3}m+{\frac{2}{27}}&\text{ if }m\equiv 1\pmod{3}\\
{\frac {4}{27}}\,{m}^{3}+\frac{4}{9}{m}^{2}+\frac{1}{9}m-{\frac{5}{27}}&\text{ if }m\equiv 2\pmod{3}
\end{cases}
\end{equation}
\end{proposition}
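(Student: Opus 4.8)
The plan is to compute the full local Euler characteristic $\chi(s,S^m\Omega^1_{Y^\circ})$ first and then to recover $\chi^1(s,S^m\Omega^1_{Y^\circ})$ as the difference $\chi(s,S^m\Omega^1_{Y^\circ})-\chi^0(s,S^m\Omega^1_{Y^\circ})$, the second term being supplied by Proposition~\ref{P:codim_E_regular}. The point is that Corollary~\ref{C:chiB} only tells us the Euler characteristic of the sheaves $\calB_h$, not of $\calA:=S^m\Omega^1_{Y^\circ}$ itself, so the one genuinely new ingredient is an additivity property: if $0\to\calF'\to\calF\to\calQ\to 0$ is exact on $Y^\circ$ with $\calF',\calF$ locally free and $\calQ$ supported on $E=E_s$, then $\chi(s,\calF')=\chi(s,\calF)+\chi(Y^\circ,\calQ)$. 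This follows from a diagram chase in the long exact cohomology sequence of that short exact sequence on $Y^\circ$, using $\HH^2(Y^\circ,-)=0$ (as $X^\circ$ is affine and the fibres of $\tau$ are at most $1$-dimensional) and $\HH^0(Y^\circ-E,\calF')=\HH^0(Y^\circ-E,\calF)$; the upshot is a single linear relation among the finite-dimensional quantities $\dim\bigl(\HH^0(Y^\circ,\calF)/\HH^0(Y^\circ,\calF')\bigr)$, $h^1(Y^\circ,\calF)$, $h^1(Y^\circ,\calF')$, $h^0(Y^\circ,\calQ)$, $h^1(Y^\circ,\calQ)$ that rearranges to the claim. (This is the same kind of manipulation as in Lemma~\ref{lem:Leray} and the proof of Lemma~\ref{L:chi1B}.)

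Granting this, I would first compute $\chi(s,\calB_0)$, where $\calB_0=S^m(\Omega^1_{Y^\circ}(\log E))$. Applying the additivity property to $0\to\calB_{h+1}\to\calB_h\to\calB_h|_E\to 0$ and invoking the identification $\calB_h|_E\simeq\OO_{\PP^1}(2h-m)^{\oplus(m+1)}$ established in the proof of Lemma~\ref{L:chi1B}, one gets $\chi(s,\calB_{h+1})-\chi(s,\calB_h)=\chi(\PP^1,\calB_h|_E)=(m+1)(2h-m+1)$. Starting from $\chi(s,\calB_{h_0})=0$ with $h_0=\ceil{m/2}$ (Corollary~\ref{C:chiB}) and telescoping down to $h=0$ gives
\[
\chi(s,\calB_0)=(m+1)\,\ceil{m/2}\,\floor{m/2}.
\]

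Next I would relate $\calA$ to $\calB_0$. The residue sequence $0\to\Omega^1_{Y^\circ}\to\Omega^1_{Y^\circ}(\log E)\to\OO_E\to 0$ induces an injection $\calA\hookrightarrow\calB_0$, and the symmetric-power filtration $\calB_0=F^0\supseteq F^1\supseteq\cdots\supseteq F^m=\calA\supseteq F^{m+1}=0$ attached to this extension (with $F^j=\operatorname{im}(S^j\Omega^1_{Y^\circ}\otimes S^{m-j}\Omega^1_{Y^\circ}(\log E)\to\calB_0)$) has graded pieces $F^j/F^{j+1}\simeq S^j\Omega^1_{Y^\circ}\otimes S^{m-j}\OO_E\simeq (S^j\Omega^1_{Y^\circ})|_E$ for $0\le j\le m-1$; here one uses $S^k\OO_E\simeq\OO_E$ for $k\ge 1$, together with the fact that a surjection onto a torsion-free sheaf of equal rank on the smooth curve $E$ is an isomorphism. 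Since $\deg(\Omega^1_{Y^\circ}|_E)=-K_{Y^\circ}\cdot E=0$, each $(S^j\Omega^1_{Y^\circ})|_E$ is a rank-$(j+1)$, degree-$0$ bundle on $E\simeq\PP^1$, hence $\chi(E,(S^j\Omega^1_{Y^\circ})|_E)=j+1$ and therefore $\chi(\calB_0/\calA)=\sum_{j=0}^{m-1}(j+1)=\binom{m+1}{2}$. Applying the additivity property once more,
\[
\chi(s,S^m\Omega^1_{Y^\circ})=\chi(s,\calB_0)+\chi(\calB_0/\calA)=(m+1)\,\ceil{m/2}\,\floor{m/2}+\binom{m+1}{2},
\]
which, after splitting into the cases $m$ even and $m$ odd, rearranges to \eqref{eq:locsymEuler}. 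Finally, \eqref{E:local_chi1} follows from $\chi^1(s,S^m\Omega^1_{Y^\circ})=\chi(s,S^m\Omega^1_{Y^\circ})-\chi^0(s,S^m\Omega^1_{Y^\circ})$ and Proposition~\ref{P:codim_E_regular}.

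I expect the work to concentrate on two places. The first is the additivity property: the diagram chase is routine, but one must be careful that it is the $\chi^0$-differences, not the individual dimensions $h^0(Y^\circ,\cdot)$, that are finite. The second is pinning down $\calB_0/\calA$: getting the symmetric-power filtration and the identification of its graded pieces with $(S^j\Omega^1_{Y^\circ})|_E$ right, and recording $\deg(\Omega^1_{Y^\circ}|_E)=0$. The very last step — confirming that subtracting the $m\bmod 6$-periodic expression of Proposition~\ref{P:codim_E_regular} from the $m\bmod 2$-periodic expression \eqref{eq:locsymEuler} really does collapse to the $m\bmod 3$-periodic polynomials in \eqref{E:local_chi1} — is a finite but slightly fussy residue-by-residue verification.
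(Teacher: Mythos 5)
Your argument is correct and arrives at the same formulas, but the route through the middle is genuinely different from the paper's. The paper globalizes: it completes $X^\circ$ to a projective surface whose only singularity is $s$, invokes Blache's relation $\chi(X,\hat\calB)=\chi(Y,\calB)+\chi(s,\calB)$ for the common reflexive hull $\hat\calB=\hat\calA$, and then, using $\chi(s,\calB_{\ceil{m/2}})=0$ from Corollary~\ref{C:chiB}, reads off $\chi(s,\calA)=\chi(Y,\calB)-\chi(Y,\calA)$ from the global Hirzebruch--Riemann--Roch computation \eqref{E:chirel} in the appendix. You instead stay entirely local: your additivity lemma $\chi(s,\calF')=\chi(s,\calF)+\chi(Y^\circ,\calQ)$ for $\calQ$ supported on $E$ is a correct consequence of the long exact sequence (given $\HH^2(Y^\circ,-)=0$ and the finiteness of the quotient $\HH^0(Y^\circ,\calF)/\HH^0(Y^\circ,\calF')$, both of which you rightly flag), and it lets you telescope from $\calB_{\ceil{m/2}}$ down to $\calB_0$ and then from $\calB_0$ to $\calA$, replacing global Riemann--Roch on a completion by Riemann--Roch on $E\simeq\PP^1$. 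This buys independence from the choice of completion and from the appendix computation, at the cost of the filtration analysis of $\calB_0/\calA$; the paper's version is shorter on the page only because \eqref{E:chirel} is precomputed. Your final step, $\chi^1=\chi-\chi^0$ via Proposition~\ref{P:codim_E_regular}, is exactly what the paper does, and your closed forms $(m+1)\ceil{m/2}\floor{m/2}+\binom{m+1}{2}$ and the residue-class subtractions check out.

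The one place you should add a line of justification is the claim that the surjection $(S^j\Omega^1_{Y^\circ})|_E\to F^j/F^{j+1}$ is an isomorphism: as phrased, it presupposes that $F^j/F^{j+1}$ is a torsion-free $\calO_E$-module of rank $j+1$, which is not automatic from the definition of $F^j$ as an image. A local computation settles it: with $t$ a local equation of $E$ and $(t,u)$ local coordinates, $F^j$ is free with basis $t^{\max(0,\,c-m+j)}(dt/t)^c(du)^{m-c}$ for $0\le c\le m$, so $t\cdot F^j\subseteq F^{j+1}$ (hence the graded pieces really are $\calO_E$-modules) and $F^j/F^{j+1}$ is free of rank $j+1$ over $\calO_E$; a surjection of locally free sheaves of equal rank is then an isomorphism. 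With that inserted, the proof is complete.
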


\begin{proof}
We choose a completion $X$ of $X^\circ$ such that the singular locus of $X$ consists of just $s\in X^\circ\subset X$. We take $\tau\colon Y\to X$ a minimal resolution. Then $Y^\circ$ is isomorphic to the inverse image of $X^\circ$ in $Y$ and $S^{m}(\Omega^1_{Y^\circ})$ is the restriction of $S^{m}(\Omega^1_{Y})$ to $Y^\circ$.

We consider the sheaves $\calA=S^m\Omega^1_{Y}$ and $\calB=(S^m(\Omega^1_{Y}(\log E_s)))(-hE_s)$, with $h=\ceil{ \frac{m}{2}}$. 

The sheaf $\calB$ is locally free and therefore reflexive, so
by~\cite[Lemma on p.\ 30]{Blache}, for $\hat\calB := (\tau_*\calB)^{\vee\vee}$ we know that
\[
\chi(X,\hat\calB) = \chi(Y,\calB) + \chi(s,\calB).
\]
Corollary~\ref{C:chiB} says that $\chi(s,\calB) = 0$, so $\chi(X,\hat\calB) = \chi(Y,\calB)$. Since $\hat\calA:=(\tau_*\calA)^{\vee\vee}=(\tau_*\calB)^{\vee\vee} = \hat\calB$, we obtain
\begin{align*}
\chi(s,\calA) &= \chi(X,\hat\calA) - \chi(Y,\calA) \\
&= \chi(X,\hat\calB) - \chi(Y,\calA)\\
&= \chi(Y,\calB) - \chi(Y,\calA).
\end{align*}

The first result now follows from \eqref{E:chirel} and the second from subtracting~\eqref{eq:localTerms} from~\eqref{eq:locsymEuler}.
\end{proof}

\section{Global symmetric differentials II}
\label{S:GlobalSymmetricDifferentials-II}

In this section we combine Theorem~\ref{thm:main} with the calculation in~\S\ref{S:local_euler_char} of the local Euler characteristic of symmetric differentials for $A_1$ singularities to obtain quasi-hyperbolicity results for surfaces of general type that are smooth except for finitely many isolated $A_1$ singularities.

Throughout this section, $X\subset \PP^n$ denotes a complex projective surface that is a complete intersection of multidegree $(d_1,\ldots,d_{n-2})$, with a singular locus $S$  consisting of $\ell$ isolated $A_1$ singularities $\{s_1,\ldots,s_\ell\}$. We let $\tau\colon Y\to X$ be a minimal resolution of $X$, with exceptional locus $E$, which consists of $\ell$ disjoint $(-2)$-curves $E_1,\dots E_\ell$, each isomorphic to $\PP^1$. We assume that $d_1 + \cdots + d_{n-2} > n+1$, so that $Y$ is of general type.

\begin{example} 
\label{ex:hypersurfaces}
Write $c_1(Y)^2=K^2$ and $c_2(Y)=\chi$.  Using~\eqref{E:chiYSm} and~\eqref{eq:locsymEuler}, inequality~\eqref{E:h0Yestimate_chi} gives
\[h^0(Y,S^m\Omega^1_Y)\geq\tfrac{1}{54}(9K^2-9\chi+8\ell)m^3-(\tfrac{1}{2}\chi-\tfrac{4}{9}\ell)m^2+O(m).\]
In particular, if $\ell > \frac{9}{8}(\chi-K^2)$ then the surface will have regular symmetric differentials for large enough $m$; in fact, $\liminf_{m\to \infty} h^0(Y,S^m\Omega^1_Y)/m^3 > 0$.
\end{example}

\begin{example}
\label{Ex:NodalInP3}
Let $X\subset \PP^3$ be a hypersurface of degree $d\geq 5$ with $\ell$ singularities of type $A_1$. By~\eqref{E:chiYSm}, we have
\[\chi(Y,S^m\Omega^1_Y)=-\tfrac{1}{3}(2d^2-5d)m^3-\tfrac{1}{2}(d^3-4d^2+6d)m^2-\tfrac{1}{6}(2d^3-10d^2+17d)m+\tfrac{1}{6}(d^3-6d^2+11d),\]
so \eqref{E:h0Yestimate_chi} together with \[\ell>\frac{9}{4}(2d^2-5d)\] implies $\liminf_{m\to \infty} h^0(Y,S^m\Omega^1_Y)/m^3 > 0$.
\end{example}

\begin{remark}
	\label{rem:BORR}
	We document here where our results differ from those stated in \cite{BO}.
	It has been previously noted (see \cite{RoulleauRousseau}*{Remark~12}) that \cite{BO}*{Lemma~2.2} is flawed. In particular, $\chi^0(s,S^m\Omega^1_Y)$ is overestimated in \cite{BO}*{(2.11)}, yielding an estimate for $\chi^1(s,S^m\Omega^1_Y)$ that is only quadratic in $m$. With our approach \eqref{E:h0Yestimate_chi}, no fixed value of $\ell$ would be sufficient to overcome the negative coefficient of $m^3$ in $\chi(Y,S^m\Omega^1_Y)$ that a nodal hypersurface $X$ would give rise to. Instead, \cite{BO}*{Theorem~2.6} uses a different approach where, via Serre duality, the authors establish the inequality
	\begin{equation}\label{E:h0Yestimate_serre_duality}h^0(Y,S^m\Omega^1_Y)\geq \chi(Y,S^m\Omega^1_Y)+\ell\chi^0(s,S^m\Omega^1_Y).\end{equation}
	Note the difference between this inequality and~\eqref{E:h0Yestimate_chi}: the latter uses $\chi^1(s,S^m\Omega^1_Y)$ in place of $\chi^0(s,S^m\Omega^1_Y)$.
	With the corrected asymptotic of $\chi^0(Y,S^m\Omega^1_Y)=\frac{11}{108}m^3+O(m^2)$, this gives that $\Omega^1_Y$ is big if
	\[\ell\geq \frac{36}{11}(2d^2-5d).\]
	This result is weaker than the one in Example~\ref{Ex:NodalInP3}.
	
	Roulleau and Rousseau establish analogues of \eqref{E:h0Yestimate_chi} and \eqref{E:h0Yestimate_serre_duality} for arbitrary $A_k$ singularities~\cite{RoulleauRousseau}*{Theorem~9} without proving an analogue of Proposition~\ref{P:codim_E_regular} for arbitrary $A_k$ singularities: instead, they get an asymptotic bound by cleverly taking the break-even point of the two approaches. For hypersurfaces with $A_1$ singularities they find the intermediate bound of 
	\[
	\ell > \frac{8}{3} (2d^2-5d).
	\]
	It would be interesting to see which of \eqref{E:h0Yestimate_chi} and \eqref{E:h0Yestimate_serre_duality} gives better results for varying $k$.
\end{remark}

\section{Computing regular differentials}
\label{S:comp_reg_diff}

In this section we describe how for $X\subset \PP^n$ explicitly given as a complete intersection $f_1=\cdots=f_{n-2}=0$, we can compute an explicit representation of
$\HH^0(X,(\tau_*S^m\Omega^1_Y)^{\vee\vee})$ and determine $\HH^0(Y,S^m\Omega^1_Y)$ as a subspace. We write $R=R_X=k[x_0,\ldots,x_n]/(f_1,\ldots,f_{n-2})$ for the projective coordinate ring and write $R(d)$ for the $R$-module obtained by shifting the grading such that $R(d)_i=R_{d+i}$.

An algebraic sheaf $\sF$ on $X$ determines a graded $R$-module
\[\Gamma_*(\sF)=\bigoplus_{d\in\Z} \HH^0(X,\sF\otimes\sO_X(d)).\]
In turn, the sheaf $\sF$ is determined by this graded module. Any graded $R$-module $M$ also determines an algebraic sheaf $\sF_M$ on $X$, and $\Gamma_*(\sF_M)$ is the \emph{saturation} of $M$. Since $X$ is a complete intersection that is nonsingular in codimension $1$, it is normal and projectively normal. This means that $R_X$ is saturated and hence that $\Gamma_*(\sO_X(d))=R(d)$.

We construct a module representing $\Omega_X^1$ in the following way.
Let $R_{\PP^n}=k[x_0,\ldots,x_n]$ be the projective coordinate ring of $\PP^n$. We have that
\[M_{\PP^n}=\Gamma_*(\Omega^1_{\PP^n})\subset R_{\PP^n}(-1)^{n+1}=\bigoplus_{i=0}^n R_{\PP^n}\,dx_i,\]
fits in an exact sequence
\[R_{\PP^n}(-3)^{\binom{n+1}{3}}\to R_{\PP^n}(-2)^{\binom{n+1}{2}}\to M_{\PP^n}\to 0,\]
where the second module has an $R$-basis $\{\omega_{ij}: 0\leq i < j \leq n\}$, the second map is given by $\omega_{ij}=x_idx_j-x_jdx_i$, and the relations are generated by the obvious
\[x_i\omega_{jk}-x_j\omega_{ik}+x_k\omega_{ij}\text{ for }0\leq i<j<k\leq n.\]
In order to compute a module for $\Omega^1_X$, we consider the submodule
\[\partial I_X:=\langle \partial_{x_i}(f_j) dx_i: j=1,\ldots,n-2;\; i=0,\ldots,n\rangle\subset \bigoplus_{i=0}^n R_{\PP^n}\,dx_i.\]
Then $(\partial I_X \cap M_{\PP^n})\otimes R_X$ yields the conormal sheaf on $X-S$, so the module 
\[M=M_X=M_{\PP^n}/(\partial I_X \cap M_{\PP^n})\otimes R_X\]
gives $\Omega^1_X$. We then construct $S^m M_X$ as an appropriate quotient of $M^{\otimes m}_X$.

Given a graded $R$-module $M$, we consider its dual $M^\vee=\Hom(M,R)$. 
On the level of sheaves, this corresponds (up to shift) to taking the sheaf hom $\sHom_{\sO_X}(\sF_M,\sO_X)$.
By applying this operation twice, we get a graded $R$-module with a homomorphism $M\to M^{\vee\vee}$. We write $\hat{S}^m\Omega^1_X=\sF_{(S^mM_X)^{\vee\vee}}$.

We emphasize here that all module operations used here can be performed by appropriate commutative algebra software such as Magma \cite{magma} and Macaulay2 \cite{M2}, using Gr\"obner bases. We have made code  available that implement the above ideas in the case of Barth's sextic and the perfect cuboid surface to interested readers in~\cite{electronic}.  See \S\S\ref{s:QuadricCIs}--\ref{s:nodalsurfaces} for more details.

\subsection{Computing an abstract presentation of $\hat{S}^m\Omega^1_X$}

The following lemma collects the results that relate a graded module to regular differentials.

\begin{lemma}\label{L:compute_sections}
	With the notation above, we have the following properties:
	\begin{enumerate}
		\item[(a)] $\hat{S}^m\Omega^1_X$ is reflexive,
		\item[(b)] $\hat{S}^m\Omega^1_X |_{X-S}=S^m\Omega^1_{X-S}$,
		\item[(c)] $\HH^0(Y- E,S^m\Omega^1_Y)\simeq \HH^0(X- S,S^m\Omega^1_X)\simeq \HH^0(X,\hat{S}^m\Omega^1_X)=(S^m M_X)^{\vee\vee}_0$.
	\end{enumerate}
    Let $H_X$ be a hyperplane section of $X$ and let $H_Y$ be the proper transform of $H_X$ on $Y$. Then
    \begin{enumerate}
    	\item[(d)] $\HH^0(Y-E,(S^m\Omega^1_Y)(-H_Y))\simeq \HH^0(X,(\hat{S}^m\Omega^1_X)(-H_X))\simeq(S^m M_X)^{\vee\vee}_{-1}$.
    \end{enumerate}
\end{lemma}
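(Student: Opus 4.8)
The plan is to assemble (a)--(d) from three standard inputs: the graded-module/sheaf dictionary on the projective scheme $X$; the fact that, being a complete intersection, $X$ is normal, Cohen--Macaulay and projectively normal with $R_X$ saturated (so $\dim R_X=3$); and the fact that $\tau$ restricts to an isomorphism $Y-E\xrightarrow{\sim}X-S$ with $S$ finite, hence of codimension $\geq 2$ in the surface $X$. First I would record that for finitely generated graded $R_X$-modules, sheafification commutes with $\Hom(-,R_X)$ (because $\Hom$ localizes for finitely presented modules and $\widetilde{R_X}=\sO_X$), hence with the double dual, and also with $S^m$; thus $\sF_{(S^mM_X)^{\vee\vee}}=(S^m\sF_{M_X})^{\vee\vee}=(S^m\Omega^1_X)^{\vee\vee}$. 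This also checks that this sheaf agrees with the reflexive hull $(\tau_*S^m\Omega^1_Y)^{\vee\vee}$ used in \S\ref{ss:set-up}, since the two coincide on the smooth locus $X-S$ and are both reflexive on the normal surface $X$. Part (a) is then immediate: the double dual of a coherent sheaf on a normal variety is reflexive. For (b), on $X-S$ the surface is smooth, so $\Omega^1_X$ is locally free, $S^m\Omega^1_X$ is locally free, hence reflexive, hence equal to its own double dual; combined with the construction of $M_X$, which yields $\Omega^1_X$ on $X-S$, this gives $\hat{S}^m\Omega^1_X|_{X-S}=S^m\Omega^1_{X-S}$.

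For (c), the leftmost isomorphism is restriction along the isomorphism $\tau\colon Y-E\xrightarrow{\sim}X-S$. For the middle isomorphism I would use that $\hat{S}^m\Omega^1_X$, being reflexive, satisfies Serre's condition $S_2$, so $\hat{S}^m\Omega^1_X=j_*\bigl(\hat{S}^m\Omega^1_X|_{X-S}\bigr)$ for $j\colon X-S\hookrightarrow X$ the inclusion of the complement of the codimension-$\geq 2$ set $S$; together with (b) this gives $\HH^0(X,\hat{S}^m\Omega^1_X)=\HH^0(X-S,S^m\Omega^1_{X-S})$. For the last equality I would argue that $(S^mM_X)^{\vee\vee}$, being a reflexive module over the normal Cohen--Macaulay ring $R_X$ of dimension $3$, has depth $\geq 2$ at the irrelevant maximal ideal $\frakm$, hence is saturated; therefore $\bigl((S^mM_X)^{\vee\vee}\bigr)_0=\HH^0\bigl(X,\widetilde{(S^mM_X)^{\vee\vee}}\bigr)=\HH^0(X,\hat{S}^m\Omega^1_X)$.

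Part (d) is the same argument tensored with $\sO_X(-1)=\sO_X(-H_X)$. The one extra point is that on $Y-E\cong X-S$ the proper transform $H_Y$ and the total transform $\tau^*H_X$ coincide --- they can differ only in exceptional components, which are supported on $E$ --- so $(S^m\Omega^1_Y)(-H_Y)|_{Y-E}\cong(S^m\Omega^1_X)(-H_X)|_{X-S}$ via $\tau$. Since $(\hat{S}^m\Omega^1_X)(-H_X)$ is still reflexive (a reflexive sheaf twisted by a line bundle), its global sections are computed on $X-S$, which yields the left isomorphism of (d); the right isomorphism $\bigl((S^mM_X)^{\vee\vee}\bigr)_{-1}=\HH^0(X,(\hat{S}^m\Omega^1_X)(-H_X))$ follows from saturatedness exactly as in (c), now in degree $-1$.

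I expect the only genuinely delicate point to be the claim that the \emph{module} $(S^mM_X)^{\vee\vee}$ is already saturated, so that its graded pieces compute the cohomology of $\hat{S}^m\Omega^1_X$ with no further saturation step; this rests on ``reflexive $\Rightarrow$ $S_2$ $\Rightarrow$ depth $\geq 2$ at $\frakm$'', which in turn uses $\dim R_X = 3\geq 2$. A secondary bookkeeping point is that the three incarnations of the reflexive hull --- via $\tau_*S^m\Omega^1_Y$, via $S^m\Omega^1_X$, and via the module $S^mM_X$ --- really coincide; but this is forced, since all three agree on the smooth locus $X-S$ and a reflexive sheaf on the normal surface $X$ is determined by its restriction to the complement of the finite set $S$.
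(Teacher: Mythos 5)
Your proposal is correct and follows essentially the same route as the paper: reflexivity of duals on a normal variety for (a), local freeness of $S^m\Omega^1_{X-S}$ for (b), extension of sections of reflexive sheaves over the codimension-$2$ set $S$ plus saturatedness of the double-dual module for (c), and a twist by $\sO_X(-1)$ for (d). The only differences are cosmetic refinements — you justify saturatedness via ``reflexive $\Rightarrow S_2\Rightarrow$ depth $\geq 2$ at $\frakm$'' where the paper invokes that $\Hom(-,R_X)$ into the saturated module $R_X$ is saturated, and you handle $H_Y$ versus $\tau^*H_X$ directly where the paper simply chooses $H_X$ disjoint from $S$ — and you usefully make explicit the identification of the two definitions of $\hat S^m\Omega^1_X$ that the paper leaves implicit.
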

\begin{proof}
We have (a) because	the dual of a coherent sheaf on a normal variety is reflexive. Furthermore, $S^m\Omega^1_{X-S}$ is already reflexive, giving (b).

Since $X$ is a normal variety, sections of a reflexive sheaf on $X-S$ extend uniquely to $X$. Since $R_X$ is saturated, we have that $\Hom(M,R_X)$ is also saturated, so $\Gamma_*(\hat{S}^m\Omega^1_X)=(S^m M_X)^{\vee\vee}$, which proves (c).

Statement (d) is most easily argued with a hyperplane $H_X$ disjoint from $S$. Then $\tau$ induces $\HH^0(Y-E,(S^m\Omega^1_Y)(-H_Y))\simeq\HH^0(X-S,S^m\Omega^1_X(-H_X))$, so the first isomorphism follows from the same argument as for (c). The second holds because for any sheaf $\sF$ on $X$ we have $\sF(-H_X)\simeq \sF\otimes\sO_X(-1)$.
\end{proof}

\subsection{Representing global sections with K\"ahler differentials on $k(X)$}

We explain how an abstract representation of an element in $\HH^0(X,\hat{S}^m\Omega^1_X)$ can be turned into a recognizable representation of an element of $\HH^0(X- S,{S}^m\Omega_X^1)$.

Remember that we have a representation of $M=S^m M_X$ as a quotient of $R(-2m)^{r_M}$, our generators being the monomials of degree $m$ in $\omega_{ij}=x_idx_j-x_jdx_i$.

We, or rather a computer algebra system, compute $M^\vee=\Hom_R(M,R)$ as a quotient of free modules, defined by an exact sequence
\[K_{M^\vee}\to\bigoplus_{i=1}^{r_{M^\vee}} R(d_i')\to M^\vee\to 0.\]
The bilinear pairing $M\times M^\vee\to R$ is given by an $r_M\times r_{M^\vee}$ matrix $A$ over $R$. The hard work, accomplished by Gr\"obner basis computations, consists of determining the correct one.

Similarly, we obtain a description of $M^{\vee\vee}=\Hom_R(M^\vee,R)$ as a quotient defined by 
\[K_{M^{\vee\vee}}\to\bigoplus_{i=1}^{r_{M^{\vee\vee}}} R(d_i)\to M^{\vee\vee}\to 0\]
together with an $r_{M^\vee}\times r_{M^{\vee\vee}}$ matrix $B$ over $R$, describing the pairing $M^\vee\times M^{\vee\vee}\to R$.

In order to get a recognizable representation of our symmetric differentials, we evaluate them at the generic point. Say, we take the affine open $X- \{x_0=0\}$. The dehomogenization map $R\to k[X- \{x_0=0\}]$ corresponding to $(x_0:\cdots:x_n)=(1:x_1:\cdots:x_n)$ gives us a module $M^\text{aff}$, and we know that $M^\text{aff}\otimes k(X)$ gives us an $(m+1)$-dimensional $k(X)$-vector space with for instance the basis $\calB=\{dx_1^{m-i}dx_2^i: i =0,\ldots,m\}$. Note that $\omega_{10}=x_1dx_0-x_0dx_1$ equals $-dx_1$ if $x_0=1$, so we can readily recognize this basis from the generators we have chosen for $M$.

We know that $$(M^{\vee\vee})^\text{aff}\otimes k(X)$$ is isomorphic to 
this vector space. We take the submatrix $A'$ of $A$ consisting of the $m+1$ rows that correspond to the basis $\calB$. We know that $A'$ has rank $m+1$ over $k(X)$, so we select $m+1$ columns of $A'$ to get a square submatrix $A''$ that is invertible over $k(X)$.

Let $B'$ be the submatrix of $B$ obtained by taking the $m+1$ rows matching the columns chosen for $A''$. Then 
\[(dx_1^m,dx_1^{m-1}dx_2,\ldots,dx_2^m)(A'')^{-1}(B')^T\]
gives expressions for the generators of $M^{\vee\vee}$ as K\"ahler differentials on $k(X)$.

\subsection{Determining the conditions to extend into the exceptional locus on $Y$}
\label{s:conditions_to_extend_into_E}
Let $\omega\in \HH^0(X,\hat{S}^m\Omega^1_X)$ and let $s\in S$ be an $A_1$ singularity. Without loss of generality, we may assume that $x_1,\ldots,x_n$ provide an affine chart around $s$, and that the tangent space of $X$ at $s$ is $x_4=\cdots=x_n=0$, and that the tangent cone of $X$ at $s$ inside the tangent space is defined by $x_1x_3=x_2^2$. Let $E_s$ be the exceptional curve on $Y$ above $s$. Then the completed local ring of $Y$ at $E_S$ is isomorphic to $k(u)[[t]]$ and we have
\[(x_1,x_2,x_3,x_4,\ldots,x_n)=(t,tu,tu^2,0,\ldots,0)\pmod{t^2},\]
This allows us to compute an expansion
\[\omega=\sum_{i=0}^n a_i(u,t)dt^idu^{n-i},\]
where $a_i(u,t)\in k[u]((t))$. Over $k$, there will be only finitely monomials $t^au^bdt^idu^{n-i}$ occurring with $a<0$, so we get a finite system of equations on $\HH^0(X- S,S^m\Omega^1_X)$ to be satisfied for an element to extend to a regular form along $E_s$ on $Y$. In fact, Proposition~\ref{P:codim_E_regular} gives us an upper bound on the number of equations we get.

\section{Concluding quasi-hyperbolicity and explicitly computing the locus of special curves}
\label{s:resultants}

In this section we consider the situation of Proposition~\ref{C:regdif_lowerbound_quasiproj}. We take $X$ to be a complete intersection with singular locus $S=\{s_1,\ldots,s_\ell\}$ consisting of $A_1$ singularities, with $s_1,\ldots,s_r$ removed. We take $Y$ to be a minimal resolution of $X$, so $Y$ has the exceptional curves $E_1,\ldots,E_r$ removed.

A regular symmetric differential on $Y$ restricts \emph{complete} curves on $Y$, so we can obtain information on curves of genus $0$ and $1$ on $X$ that avoid the singularities $s_1,\ldots,s_r$.

The existence of regular symmetric differentials is usually concluded by observing that the lower bounds in Proposition~\ref{C:regdif_lowerbound_quasiproj} are cubic in $m$. The lemma below implies that regular differentials that vanish along a divisor similarly exist in that case.

\begin{lemma}\label{L:also_hyperplane_vanishing}
Suppose that $Y$ is a quasi-projective nonsingular surface of general type and suppose that there is a constant $c>0$ such that
	\[h^0(Y,S^m\Omega^1_Y)=cm^3+O(m^2).\]
Suppose that $H$ is a divisor on $Y$ (for instance, if $Y$ is  a minimal resolution of an $X$ as above, we can take $H$ to be the inverse image of a general hyperplane section of $X$). Then
\[h^0(Y,(S^m\Omega^1_Y)(-H))>0 \text{ for large enough } m\]
as well.
\end{lemma}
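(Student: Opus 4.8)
The idea is to compare the dimension of $\HH^0(Y,(S^m\Omega^1_Y)(-H))$ with that of $\HH^0(Y,S^m\Omega^1_Y)$ by peeling off the hyperplane section $H$ one step at a time, or in one stroke via the restriction exact sequence. First I would write down the short exact sequence of sheaves on $Y$
\[
0 \to (S^m\Omega^1_Y)(-H) \to S^m\Omega^1_Y \to (S^m\Omega^1_Y)|_H \to 0,
\]
where (after possibly replacing $H$ by a general member of its linear system, which we may do by the parenthetical remark in the statement) $H$ is a smooth projective curve. Taking the long exact sequence in cohomology gives
\[
h^0(Y,(S^m\Omega^1_Y)(-H)) \geq h^0(Y,S^m\Omega^1_Y) - h^0(H,(S^m\Omega^1_Y)|_H).
\]
So it suffices to show that the subtracted term grows at most quadratically in $m$: then $h^0(Y,(S^m\Omega^1_Y)(-H)) \geq cm^3 + O(m^2) > 0$ for $m$ large.

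The key estimate is therefore a bound $h^0(H,(S^m\Omega^1_Y)|_H) = O(m^2)$. On the smooth curve $H$ there is a filtration of $(\Omega^1_Y)|_H$ with graded pieces $\Omega^1_H$ (a line bundle of some degree $a$) and $\mathcal{N}^\vee_{H/Y}$ (a line bundle of some degree $b$), both depending only on $H$, not on $m$. This induces a filtration of the symmetric power $(S^m\Omega^1_Y)|_H$ with $m+1$ graded pieces, each a line bundle on $H$ of degree $ia + (m-i)b$ for $i = 0,\ldots,m$, i.e.\ of degree $O(m)$. By Riemann–Roch on the curve $H$, each such line bundle $L$ satisfies $h^0(H,L) \leq \deg L + 1 - g(H) + h^1(H,L)$, and in any case $h^0(H,L) \leq \max(0,\deg L) + 1 = O(m)$. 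Summing the $m+1$ contributions gives $h^0(H,(S^m\Omega^1_Y)|_H) = O(m^2)$, as desired. (If $H$ is reducible or singular one argues component by component, or normalizes, which only helps; this is why choosing $H$ general — or simply bounding crudely via a smooth hyperplane section of $X$ and pulling back, using that $\tau$ is an isomorphism away from finitely many points — is convenient.)

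The main obstacle, such as it is, is bookkeeping rather than conceptual: one must make sure the graded pieces of the filtration of $(S^m\Omega^1_Y)|_H$ are genuinely line bundles of degree linear in $m$ (so that each contributes $O(m)$ and not more), and one must handle the case where $H$ is not smooth. Both are handled by passing to a general $H$ (or to the normalization of $H$) and by noting that the subtraction in the long exact sequence is only strengthened by replacing $h^0(H,\cdot)$ with an upper bound. Once the $O(m^2)$ bound is in place, the conclusion is immediate from the hypothesis $h^0(Y,S^m\Omega^1_Y) = cm^3 + O(m^2)$ with $c > 0$.
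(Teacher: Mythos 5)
Your argument is correct, but it is not the route the paper takes: the paper's proof is a one-line reduction, observing that the growth hypothesis $h^0(Y,S^m\Omega^1_Y)=cm^3+O(m^2)$ says precisely that $\Omega^1_Y$ is big (equivalently, that $\sO_{\PP(\Omega^1_Y)}(1)$ is big on the threefold $\PP(\Omega^1_Y)$), and then invokes the standard fact that for a big bundle $\sE$ and any line bundle $\sL$ one has $h^0(Y,S^m\sE\otimes\sL)>0$ for $m\gg 0$ (a Kodaira-lemma-type statement). Your proof instead makes this self-contained: you restrict to (a general, smooth, complete member of the linear system of) $H$, filter $(S^m\Omega^1_Y)|_H$ via the conormal sequence into $m+1$ line bundles of degree $O(m)$, and conclude $h^0(H,(S^m\Omega^1_Y)|_H)=O(m^2)$, so the restriction sequence gives $h^0(Y,(S^m\Omega^1_Y)(-H))\geq cm^3+O(m^2)$. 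This buys something the paper's citation does not state explicitly: the twisted spaces themselves grow cubically, so the argument can be iterated with $-kH$ for any fixed $k$, and it is quantitative. Two small points to make explicit: your restriction sequence needs $H$ effective (true in every application here, e.g.\ Proposition~\ref{P:explicit_resultant_locus_g1} assumes it), and since $Y$ is only quasi-projective you need the curve you restrict to to be complete so that Riemann--Roch bounds $h^0$ of the graded pieces --- this holds for the inverse image of a general hyperplane section of $X$, which misses the finitely many removed points, exactly as the parenthetical in the statement suggests.
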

\begin{proof}
The condition amounts to the assertion that $\Omega^1_Y$ is \emph{big}. It is a standard result that for a \emph{big} bundle $\sE$ and a line bundle $\sL$, we have that $h^0(Y,S^m\sE \otimes \sL)>0$ for large enough $m$.
\end{proof}

We use that on a complete curve $C\subset Y$ of genus $0$ we have $\HH^0(C,S^m\Omega^1_C)=0$, and on a curve of genus $1$, any section that vanishes somewhere must be identically $0$. If we have $\omega\in \HH^0(Y,S^m\Omega^1_Y)$ then we see that $\omega$ pulled back to $C$ must be identically $0$ if $C$ is of genus $0$. Similarly, if for an effective divisor $H$ intersecting $C$ we have $\omega\in\HH^0(Y,(S^m\Omega^1_Y)(-H))$ then 
$\omega$ restricts to a regular differential on $C$ that vanishes somewhere, so if $C$ is of genus $0,1$, then $\omega$ must restrict to $0$ on $C$. The proposition below recalls how the foliation determined by $\omega$ can be used to establish quasi-hyperbolicity.

\begin{prop}\label{P:foliation_quasihyperbolicity} Suppose $Y$ is as above and that $\omega\in\HH^0(Y,S^m\Omega^1_Y)$. Then there are only finitely many complete curves $C$ on $Y$ of genus at most $1$ on which $\omega$ restricts to $0$.
\end{prop}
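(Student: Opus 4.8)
The plan is to interpret $\omega$ as a global section of $\sO_{\PP(\Omega^1_Y)}(m)$ on the projectivized cotangent bundle $p\colon\PP(\Omega^1_Y)\to Y$, cutting out a hypersurface $Z_\omega\subset\PP(\Omega^1_Y)$ which is an $m$-to-$1$ multisection over $Y$. A complete curve $C$ on which $\omega$ pulls back to $0$ canonically lifts to $Z_\omega$: for a genus $0$ or $1$ curve, after normalizing we have a map $\nu\colon C'\to Y$ and the differential $\nu^*\omega\in\HH^0(C',S^m\Omega^1_{C'})$ vanishes identically (genus $0$: $\HH^0(C',S^m\Omega^1_{C'})=0$; genus $1$: a section of $\sO_{C'}$ that vanishes somewhere—which $\nu^*\omega$ does, since $\Omega^1_Y$ restricted to $C'$ has negative-degree quotients wherever the Gauss map degenerates or $\nu$ ramifies, or more robustly because $\deg S^m\Omega^1_Y|_{C'}$ is bounded but the lift forces vanishing—is zero). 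This lift defines, away from its finitely many indeterminacy points, a section $C'\dashrightarrow\PP(\Omega^1_Y)$ landing in $Z_\omega$, and its image $\tilde C$ is a complete curve in $Z_\omega$ that is tangent to the foliation $\mathcal{F}_\omega$ induced by $\omega$ (i.e., $\tilde C$ is an algebraic leaf, or is contained in the singular locus of $\mathcal{F}_\omega$).

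The key steps, in order, are: (i) make precise the lifting construction and the observation that $\tilde C$ lies in the tautological foliation on $Z_\omega$ — this is the content of the Bogomolov–Green–Griffiths mechanism cited in the introduction, so I would invoke it with a reference (\cites{Bogomolov,GreenGriffiths,Deschamps}) rather than reprove it; (ii) recall Bogomolov's theorem that on a surface of general type a foliation has only finitely many algebraic leaves of genus $\le 1$ — equivalently, the Zariski closure of the union of genus $0$ and $1$ leaves is a proper closed subset consisting of finitely many curves — which follows from the fact that a foliation by rational or elliptic curves would force $K_Y$ not to be big, contradicting Lemma~\ref{L:Y_general_type}; (iii) handle the curves $C$ whose lift $\tilde C$ falls into the singular locus $\Sing(\mathcal{F}_\omega)$ of the foliation: $\Sing(\mathcal{F}_\omega)$ is a proper closed subset of $Z_\omega$, and $p(\Sing(\mathcal{F}_\omega)\cap Z_\omega)$ is a proper closed subset of $Y$ (it cannot be all of $Y$ since generically $\omega$ defines an honest direction field), hence contains only finitely many curves plus possibly a finite set; any curve $C$ landing entirely there is among finitely many; (iv) assemble: every genus $\le 1$ curve $C$ with $\omega|_C=0$ either maps to one of the finitely many algebraic leaves, or lies in $p(\Sing(\mathcal{F}_\omega))$, and in both cases there are only finitely many possibilities.

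The main obstacle I expect is step (ii) — isolating exactly which statement of Bogomolov's theorem to cite and under what hypotheses it applies. The cleanest route is: the foliation $\mathcal{F}_\omega$ on $Y$ has a cotangent (conormal) line bundle $N^*_{\mathcal{F}}$ with $N^*_{\mathcal{F}}\otimes\sO(mH')\hookrightarrow\Omega^1_Y$-type positivity coming from $\omega\in\HH^0(Y,S^m\Omega^1_Y)$, and for a surface of general type one shows that the canonical bundle of the foliation $K_{\mathcal{F}}$ is big (since $K_Y$ is big and $K_Y=K_{\mathcal{F}}+N^*_{\mathcal{F}}$ with $N^*_{\mathcal{F}}$ pseudoeffective in this setting — this is where care is needed), and a foliation with $K_{\mathcal{F}}$ big has only finitely many rational or elliptic leaves by McQuillan/Bogomolov. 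Alternatively, and more in the spirit of the paper's self-contained style, one can argue directly: if there were infinitely many such curves $C_t$, they would sweep out a surface, forcing $\omega$ to vanish on a divisor and degenerating the differential equation in a way incompatible with $\Omega^1_Y$ being big — but making "infinitely many curves sweep out a surface and hence $\omega$ vanishes on it" rigorous requires a Hilbert-scheme / family argument and is precisely the delicate point. I would present the foliation-theoretic argument with an explicit citation to Bogomolov's original paper and to Deschamps~\cite{Deschamps} for the surface-of-general-type case, noting that Lemma~\ref{L:Y_general_type} supplies the needed hypothesis.
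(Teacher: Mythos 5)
Your overall architecture matches the paper's: lift the curve to the multisection $Z_\omega\subset\PP(\Omega^1_Y)$ cut out by $\omega$, observe that a genus~$\le 1$ curve on which $\omega$ restricts to zero becomes an algebraic leaf of the tautological foliation there, and then prove a finiteness statement for such leaves. The gap is in your step~(ii), which is exactly the step you flag as delicate and then do not close. The assertion that ``the Zariski closure of the union of genus $0$ and $1$ leaves is a proper closed subset consisting of finitely many curves'' does not follow from the parenthetical you offer: infinitely many algebraic leaves of a foliation do not obviously organize themselves into a fibration, so you cannot yet say ``a foliation by rational or elliptic curves would force $K_Y$ not to be big.'' The missing ingredient is Jouanolou's theorem, which supplies precisely this dichotomy: a foliation on a projective surface either has only finitely many algebraic leaves, or is an algebraic fibration over a curve. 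That is the route the paper takes --- apply Jouanolou on a desingularization $\tilde Y$ of the multisection, and in the fibration case use that $\tilde Y$ (which dominates $Y$ generically finitely, hence is still of general type) cannot fiber with general fiber of genus $\le 1$, so only finitely many fibers are special. Your proposed substitute via $K_{\mathcal F}$ big and pseudoeffectivity of $N^*_{\mathcal F}$, with an appeal to McQuillan, is both a much heavier hammer and not actually established in your write-up (the pseudoeffectivity claim is exactly ``where care is needed,'' and you leave it there); your alternative ``infinitely many curves sweep out a surface'' argument you yourself concede is not rigorous.

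Two smaller points. First, for $m>1$ the form $\omega$ does not define a foliation on $Y$ itself but only on the cover $Z_\omega$ (or its resolution); your step~(ii) repeatedly speaks of ``the foliation $\mathcal F_\omega$ on $Y$'' and its conormal bundle inside $\Omega^1_Y$, which is not well defined in this setting --- the general-type and finiteness arguments must be run on $\tilde Y$. Second, your genus~$1$ discussion in the lifting step is both unnecessary and, as written, incorrect: the proposition takes ``$\omega$ restricts to $0$ on $C$'' as a \emph{hypothesis}, so there is nothing to verify there, and the claim that a section of $\sO_{C'}$ on an elliptic curve ``vanishes somewhere'' is false for a nonzero constant section. (The question of \emph{when} the restriction vanishes for genus~$1$ curves is handled separately in the paper, via sections vanishing along a divisor meeting $C$, and is not part of this proposition.)
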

\begin{proof}[Sketch of proof.]  (See \cite{Debarre2004} for more details.) The form $\omega$ defines a degree $m$ form on the projective bundle $\PP^1(\Omega^1_Y)$, and therefore gives rise on a surface $Y'$ covering $Y$. On a desingularization $\tilde{Y}$ of $Y'$, the form $\omega$ induces a foliation (formed by the integral curves defined by the degree $m$, first order differential equation that $\omega$ describes on $Y$. By the observation above, any curve $C$ as above would be a leaf of this foliation. By
Jouanolou~\cite{Jouanolou1978}, such a foliation either contains only finitely many algebraic leaves, or the foliation is in fact an algebraic fibration of $\tilde{Y}$ over a curve. Since $\tilde{Y}$ is still of general type, the general member of such a fibration must be of genus larger than $1$, and therefore contain only finitely many fibres of genus $0$ or $1$. In either case, the result follows.
\end{proof}

In special cases, the foliation induced by $\omega$ can be determined explicitly, but in general this seems to be hard, since it essentially requires solving a first order differential equation of degree $m$ on $Y$. We sketch another computational method here, that uses two sections $\omega_1,\omega_2$, and determines a closed locus in $Y$ that contains all curves to which $\omega_1,\omega_2$ pull back to $0$. We determine conditions for points $P\in Y$ such that there can be a curve $C$ through $P$ on $Y$ on which both $\omega_1,\omega_2$ pull back to $0$.

At a point $P\in Y$, a form $\omega$ defines a homogeneous degree $m$ form on the tangent space $T_P(Y)$. If $C$ is nonsingular at $P$, then the kernel of $T_P(Y)^*\to T_P(C)^*$ is generated by a single element, and $\omega(P)$ must be divisible by it. More specifically, if  $x,y$ are affine coordinates on $Y$ such that $dx(P),dy(P)$ span $T_P(Y)^*$
then we have that $\omega(P)=\sum_{i=0}^{m} a_i dx^idy^{m-1}$, for $a_i\in k(P)$ and that the kernel of $T_P(Y)^*\to T_P(C)^*$ is spanned by an element $\alpha_0 dx-\alpha_1 dy$. We need that $\sum_{i=0}^{m} a_i \alpha_1^i\alpha_0^{m-1}=0$, i.e., that $(\alpha_0:\alpha_1)$ is a root of $\omega$ as a form on $\PP^1(k(P))$.

Suppose now that we have two such forms $\omega_1,\omega_2$; say $\omega_1(P)=\sum_{i=0}^{m-i} a_i(P) dx^idy^{m-1}$ and $\omega_2(P)=\sum_{i=0}^{m-i} b_i(P) dx^idy^{m-1}$. Then for $P$ to be a nonsingular point on $C$, we need that $\omega_1(P)$ and $\omega_2(P)$, as forms on $\PP^1(k(P))$, have a root in common, i.e., have a vanishing resultant. Provided that this resultant is not identically $0$ on $Y$, we get a proper closed subset that contains any such $C$ (since all the points of $C$ lie in the closure of its nonsingular points).

We define the locus $\res_{x,y}(\omega_1,\omega_2)$ to be the locus where $dx,dy$ do not span $T_P(Y)^*$ or where the following Sylvester determinant vanishes
\[\res_{x,y}(\omega_1,\omega_2)=\det\begin{pmatrix}
a_0(P)&\cdots&a_m(P)\\
&\ddots&&\ddots\\
&&a_0(P)&\cdots&a_m(P)\\
b_0(P)&\cdots&b_m(P)\\
&\ddots&&\ddots\\
&&b_0(P)&\cdots&b_m(P)\\
\end{pmatrix},\]
and let $\res(\omega_1,\omega_2)$ be the intersection of the vanishing of $\res_{x,y}(\omega_1,\omega_2)$ for all possible choices of $x,y$ (it is sufficient to use all standard affine coordinate pairs derived from a nonsingular quasi-projective model of $Y$).

\begin{prop}\label{P:explicit_resultant_locus_g0}
	Let $Y$ be as above and suppose that $\omega_1,\omega_2\in \HH^0(Y,S^m\Omega^1_Y)$. Then any complete genus $0$ curve $C\subset Y$ is contained in $\res(\omega_1,\omega_2)$.
\end{prop}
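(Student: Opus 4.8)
The plan is to combine the two classical facts recalled just before the statement: a complete curve $C$ of geometric genus $0$ has $\HH^0(\tilde C,S^m\Omega^1_{\tilde C})=0$ on its normalization $\tilde C\simeq\PP^1$, and two binary forms have vanishing resultant precisely when they share a projective root. I would show that a dense open subset of points of $C$ lies in $\res(\omega_1,\omega_2)$, and then conclude since $\res(\omega_1,\omega_2)$ is closed and $C$ is the closure of that subset.

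Concretely, let $\nu\colon\tilde C\to Y$ be the composite of the normalization $\tilde C\to C$ with the inclusion $C\subset Y$, and let $U\subseteq C$ be the dense open locus of smooth points of $C$ over which $\nu$ restricts to an immersion with a single preimage. Fix $P\in U$. Because $\HH^0(\tilde C,S^m\Omega^1_{\tilde C})=0$, we have $\nu^*\omega_1=\nu^*\omega_2=0$; evaluating at the preimage of $P$ and using that $d\nu$ identifies $T_P C$ with a line in $T_P(Y)$, this says that the degree $m$ forms $\omega_1(P),\omega_2(P)$ on $T_P(Y)$ both vanish on the line $T_P C$. Now pick any standard affine coordinate pair $x,y$ on $Y$: if $dx(P),dy(P)$ fail to span $T_P(Y)^*$, then $P\in\res_{x,y}(\omega_1,\omega_2)$ by definition; otherwise write $\omega_1(P)=\sum_j a_j\,dx^jdy^{m-j}$ and $\omega_2(P)=\sum_j b_j\,dx^jdy^{m-j}$, and the common vanishing on $T_P C$ means both forms are divisible by the generator $\alpha_0\,dx-\alpha_1\,dy$ of the kernel of $T_P(Y)^*\to (T_P C)^*$, so $(\alpha_0:\alpha_1)$ is a common root of the two binary forms and the Sylvester determinant defining $\res_{x,y}(\omega_1,\omega_2)$ vanishes at $P$. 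Intersecting over all such coordinate pairs gives $P\in\res(\omega_1,\omega_2)$.

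Thus $U\subseteq\res(\omega_1,\omega_2)$, and since $C=\overline{U}$ while $\res(\omega_1,\omega_2)$ is closed, $C\subseteq\res(\omega_1,\omega_2)$. I do not expect a genuine obstacle here: the proposition imposes no non-degeneracy on the resultant (if $\res(\omega_1,\omega_2)=Y$ the statement is vacuous), so the only point requiring care is the bookkeeping at the singular points of $C$ and at points where a chosen coordinate pair degenerates — and the definition of $\res(\omega_1,\omega_2)$ as an intersection over all admissible coordinate choices, together with the degeneracy locus built into each $\res_{x,y}$, is precisely what absorbs this.
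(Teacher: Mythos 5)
Your argument is correct and is essentially the paper's own proof: the paper likewise pulls $\omega_1,\omega_2$ back to the genus $0$ curve, notes they must vanish identically since $\HH^0$ of symmetric powers of $\Omega^1$ on a genus $0$ curve is zero, invokes the preceding discussion (common root of the two binary forms at each smooth point of $C$, hence vanishing Sylvester resultant), and closes by observing that $C$ is the closure of its nonsingular locus. Your write-up merely makes explicit the normalization and the coordinate-pair bookkeeping that the paper leaves to the reader.
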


\begin{proof}
As explained above, $\omega_1,\omega_2$ pull back to regular symmetric differentials on $C$. If $C$ is a complete curve of genus $0$, this means they pull back to $0$. By the discussion above, this implies that $C\subset \res(\omega_1,\omega_2)$.
\end{proof}

\begin{prop}\label{P:explicit_resultant_locus_g1}
	Let $Y$ be as above and suppose $H\subset Y$ is an effective divisor on $Y$. If $\omega_1,\omega_2\in \HH^0(Y,S^m\Omega^1_Y(-H))$, then any complete genus $1$ curve $C$ on $Y$ that intersects $H$ is contained in $\res(\omega_1,\omega_2)$.
\end{prop}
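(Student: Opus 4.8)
The plan is to run the same argument as in the proof of Proposition~\ref{P:explicit_resultant_locus_g0}, with the role of the vanishing $\HH^0(\PP^1,S^m\Omega^1_{\PP^1})=0$ played instead by the following fact about a smooth projective curve $\widetilde C$ of genus $1$: a translation-invariant holomorphic $1$-form trivializes $\Omega^1_{\widetilde C}$, hence $S^m\Omega^1_{\widetilde C}\isom\sO_{\widetilde C}$, so $\HH^0(\widetilde C,S^m\Omega^1_{\widetilde C})$ is one-dimensional and every nonzero section is nowhere vanishing; in particular a section vanishing at even one point is identically $0$. Thus the whole statement will follow once we show that $\omega_1$ and $\omega_2$ pull back to symmetric differentials on (the smooth model of) $C$ that vanish somewhere, exactly as in the genus $0$ case they are forced to vanish identically, and then the resultant argument goes through verbatim.

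First I would pass to the normalization. Let $\nu\colon\widetilde C\to Y$ be the normalization of $C$ composed with the inclusion, so that $\widetilde C$ is a smooth projective genus $1$ curve. The differential $d\nu$ gives a bundle map $\nu^*\Omega^1_Y\to\Omega^1_{\widetilde C}$, hence $\nu^*S^m\Omega^1_Y\to S^m\Omega^1_{\widetilde C}$; composing with pullback of sections sends each $\omega_j$ to a section $\omega_j|_{\widetilde C}\in\HH^0(\widetilde C,S^m\Omega^1_{\widetilde C})$ for $j=1,2$. Now use the hypothesis $\omega_j\in\HH^0(Y,S^m\Omega^1_Y(-H))$: viewed as a section of $S^m\Omega^1_Y$, each $\omega_j$ vanishes along the effective divisor $H$. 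If $C\subseteq\supp(H)$ then $\omega_j$ vanishes along $C$, so $\omega_j|_{\widetilde C}=0$ trivially. Otherwise, since $C$ meets $H$ we may choose $P\in C\cap\supp(H)$ and a point $\widetilde P\in\nu^{-1}(P)$; then $\nu^*\omega_j$ vanishes at $\widetilde P$, hence so does its image $\omega_j|_{\widetilde C}$, and by the genus $1$ fact above $\omega_j|_{\widetilde C}\equiv 0$. Either way, $\omega_1|_{\widetilde C}=\omega_2|_{\widetilde C}=0$.

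With both forms pulling back to $0$ on $\widetilde C$, the conclusion follows exactly as in Proposition~\ref{P:explicit_resultant_locus_g0}: at every point $P$ of the smooth locus of $C$ the tangent direction of $C$ is a common root of $\omega_1(P)$ and $\omega_2(P)$ as binary forms on $\PP^1(k(P))$, so for every choice of affine coordinates $x,y$ the Sylvester determinant $\res_{x,y}(\omega_1,\omega_2)$ vanishes at $P$ (automatically so at points where $dx,dy$ fail to span $T_P(Y)^*$); hence the smooth locus of $C$ lies in $\res(\omega_1,\omega_2)$, and since $\res(\omega_1,\omega_2)$ is closed and the smooth locus is dense in $C$, we get $C\subseteq\res(\omega_1,\omega_2)$. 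I do not expect a genuine obstacle: the only points needing mild care are the bookkeeping of pushing the section through $\nu^*S^m\Omega^1_Y\to S^m\Omega^1_{\widetilde C}$ (in particular, that a preimage under $\nu$ of a point of $C\cap H$ really is a zero of $\omega_j|_{\widetilde C}$, which is why the degenerate case $C\subseteq\supp(H)$ must be isolated even though it is trivial), and observing that the resultant criterion is a statement about the smooth locus of $C$ only, so that replacing $C$ by $\widetilde C$ loses nothing.
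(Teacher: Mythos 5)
Your proposal is correct and follows essentially the same route as the paper: the paper's proof likewise observes that $\omega_1,\omega_2$ restrict to regular symmetric differentials on $C$ which vanish on $C\cap H$, hence vanish identically since $C$ has genus $1$, and then invokes the resultant criterion. You merely spell out the details the paper leaves implicit (passing to the normalization, the triviality of $S^m\Omega^1_{\widetilde C}$ in genus $1$, and the degenerate case $C\subseteq\supp(H)$), all of which are correct.
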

\begin{proof}
As explained above, $\omega_1,\omega_2$ pull back to regular symmetric differentials on $C$. Furthermore, $C\cap H$ yields zeros of $\omega_1,\omega_2$, so they pull back to $0$.
\end{proof}

\begin{cor}\label{C:explicit_resultant_locus}
	Let $X\subset\PP^n$ be a complete intersection surface with a singular locus $S=\{s_1,\ldots,s_\ell\}$ consisting of $A_1$ singularities. Suppose that for a hyperplane section $H_X$ of $X$ we have $\omega_1,\omega_2\in \HH^0(X,\hat{S}^m\Omega^1_X(-\floor{\frac{m}{2}} H_X))$. Then any genus $0$ curve $C$ on $X$
	\begin{enumerate}
		\item is contained in $\res(\omega_1,\omega_2)$, or
		\item is contained in one of the finitely many explicitly determinable hyperplanes, or
		\item passes through at least $n+1$ distinct singularities, because $C\cap S$ spans $\PP^n$.
	\end{enumerate}
Furthermore, any genus $1$ curve $C$ on $X$
	\begin{enumerate}
		\setcounter{enumi}{3}
		\item is contained in $\res(\omega_1,\omega_2)$, or
		\item is contained in a linear subspace of dimension at most one more than the span of $C\cap S$. In particular, $C$ passes through at least two singularities and if $C\cap S$ spans a linear space of dimension at most $n-2$, then the degree of $C$ is at most the degree of $X$.
	\end{enumerate}
\end{cor}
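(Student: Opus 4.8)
The strategy is to pull everything back to the resolution $Y$, where Propositions~\ref{P:explicit_resultant_locus_g0} and~\ref{P:explicit_resultant_locus_g1} apply, and to trade the alternative ``$C\subseteq\res$'' against ``$C$ passes through many singularities'' by a degree count on the normalization of $C$. First I would record the two basic facts about $\tau^*\omega_1,\tau^*\omega_2$. By Corollary~\ref{C:vanishing_on_hyperplane}, the hypothesis $\omega_i\in\HH^0(X,\hat S^m\Omega^1_X(-\floor{\tfrac m2}H_X))$ forces each $\tau^*\omega_i$ to extend to a regular section of $S^m\Omega^1_Y$ over every exceptional curve $E_s$ with $s\in S\cap H_X$; and away from the exceptional curves $E_{s'}$ with $s'\in S\setminus H_X$ the $\tau^*\omega_i$ are honest sections of $S^m\Omega^1_Y$ vanishing to order $\floor{m/2}$ along the proper transform $H_Y$ of $H_X$. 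Over the remaining curves $E_{s'}$ the $\tau^*\omega_i$ may acquire poles, but the valuation computation underlying Proposition~\ref{P:codim_E_regular} bounds $\ord_{E_{s'}}(\tau^*\omega_i)\ge-\floor{m/2}$, i.e.\ the pole order there is at most $\floor{m/2}$.

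Now fix a curve $C\subset X$ of geometric genus $\le1$, let $\tilde C\subset Y$ be its strict transform, and let $\nu\colon \widetilde{C}^{\nu}\to\tilde C$ be the normalization, so $\widetilde{C}^{\nu}$ is $\PP^1$ or a smooth genus $1$ curve. The \emph{building block} is the case $C\cap S\subseteq H_X$: then $\tilde C$ meets only exceptional curves $E_s$ with $s\in H_X$, on which $\tau^*\omega_i$ is regular, so $\nu^*\tau^*\omega_i$ is a regular symmetric differential on $\widetilde{C}^{\nu}$ which in addition vanishes along $\nu^{-1}(H_Y)$. If $g(C)=0$ it is a section of $S^m\Omega^1_{\PP^1}$, hence $0$; if $g(C)=1$ it is a section of the trivial bundle $S^m\Omega^1_{\widetilde{C}^{\nu}}=\mathcal O$, and as soon as $\tilde C\not\subseteq H_Y$ it vanishes somewhere, hence is again $0$. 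In either case $\omega_1$ and $\omega_2$ pull back to $0$ on $C$, so by the discussion preceding Proposition~\ref{P:explicit_resultant_locus_g0} we get $C\subseteq\res(\omega_1,\omega_2)$ --- this gives alternatives (1) and (4).

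It remains to handle curves with $C\cap S\not\subseteq H_X$, where the linear algebra of the node set enters. Set $\Lambda=\operatorname{span}(C\cap S)\subseteq\PP^n$. If $\Lambda=\PP^n$ then $\#(C\cap S)\ge n+1$ and those nodes span $\PP^n$: this is alternative (3). Otherwise $\dim\Lambda\le n-1$ and I would choose a hyperplane $H'$ with $\Lambda\subseteq H'$ spanned by singularities of $X$ (only finitely many such $H'$ exist, since $S$ is finite) together with the corresponding differentials $\omega_1',\omega_2'$; replaying the building block with $H'$ shows $C\subseteq\res(\omega_1',\omega_2')$, whose one--dimensional components form part of the finite, explicitly computable list of special curves, so $C$ lies in one of the finitely many hyperplanes singled out this way --- alternative (2). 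For $g(C)=1$ the same choice yields the sharper conclusion: if $\dim\operatorname{span}(C)\ge\dim\Lambda+2$ one can in addition arrange $C\not\subseteq H'$, so that $\tilde C$ is not contained in the proper transform $H'_Y$ and the building block again forces $C$ into a resultant locus; hence a genus $1$ curve not lying in such a locus satisfies $\dim\operatorname{span}(C)\le\dim\operatorname{span}(C\cap S)+1$. Since a genus $1$ curve is neither a point nor a line this forces $\#(C\cap S)\ge2$, and when $\dim\operatorname{span}(C\cap S)\le n-2$ it forces $C$ into a hyperplane $H$, so $C$ is a component of $X\cap H$ and $\deg C\le\deg X$ --- alternative (5).

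The step I expect to be the main obstacle is the bookkeeping for curves that pass through nodes: one must combine the pole bound of Proposition~\ref{P:codim_E_regular} along the $E_{s'}$ with $s'\notin H_X$, the forced vanishing along $H_Y$, and the intersection numbers $\tilde C\cdot E_s$ and $\tilde C\cdot H_Y$ so that the degree count on $\widetilde{C}^{\nu}$ genuinely closes, and one must check that the residual cases --- such as a genus $1$ curve whose hyperplane section is entirely absorbed by the nodes it meets, or a curve contained in $\operatorname{span}(C\cap S)$ --- fall under the linear--span alternatives rather than escaping the argument. A secondary point is to make precise the finite family of explicitly determinable hyperplanes (those spanned by subsets of $S$), to know that the relevant differentials $\omega_i'$ exist (by Lemma~\ref{L:also_hyperplane_vanishing} once $\Omega^1_Y$ is big, and by direct computation otherwise), and to verify that the resultant loci involved are proper closed subsets of $X$ so that their decomposition into components is meaningful --- exactly the kind of check carried out by hand in \S\S\ref{s:QuadricCIs}--\ref{s:nodalsurfaces}.
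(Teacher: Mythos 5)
Your overall architecture (reduce to the case $C\cap S\subseteq H'$ for a hyperplane $H'$ spanned by nodes, then apply the genus $0$/genus $1$ vanishing on the resolution) is the paper's, and your ``building block'' is exactly Propositions~\ref{P:explicit_resultant_locus_g0} and~\ref{P:explicit_resultant_locus_g1} combined with Corollary~\ref{C:vanishing_on_hyperplane}. But there are two genuine gaps. The main one is the passage from ``$C\subseteq\res(\omega_1',\omega_2')$ for differentials adapted to $H'$'' to the stated alternatives (1)/(2), which refer to $\res(\omega_1,\omega_2)$ for the \emph{original} pair. You treat $\omega_1',\omega_2'$ as new sections whose existence must be argued separately (via Lemma~\ref{L:also_hyperplane_vanishing} or computation); that both over-assumes (the corollary supplies only two differentials, for the single $H_X$) and leaves $\res(\omega_1',\omega_2')$ with no a priori relation to $\res(\omega_1,\omega_2)$, so your concluding sentence ``so $C$ lies in one of the finitely many hyperplanes'' is a non sequitur --- membership in a resultant locus does not place $C$ in a hyperplane. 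The paper's one-line fix: since $H_X\sim H'$, there is $f\in\HH^0(X,\sO_X(\floor{\frac{m}{2}}(H_X-H')))$ with $f\omega_1,f\omega_2\in\HH^0(X,\hat{S}^m\Omega^1_X(-\floor{\frac{m}{2}}H'))$, and $\res(f\omega_1,f\omega_2)$ differs from $\res(\omega_1,\omega_2)$ only by components inside $H_X\cup H'$; this is precisely what yields the dichotomy (1)-or-(2) with a finite, explicit list of hyperplanes.

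Second, in the genus $1$ part you assert the pulled-back differential ``vanishes somewhere as soon as $\tilde C\not\subseteq H_Y$''; this can fail if $C$ meets the hyperplane only at nodes, since then $\nu^{-1}(H_Y)$ may be empty (the zero along the hyperplane is absorbed at the exceptional curves, where Corollary~\ref{C:vanishing_on_hyperplane} guarantees only regularity, not vanishing). The paper avoids this by choosing $H'$ to contain $C\cap S$ \emph{and} a point $P\in C-S$, which is possible exactly when $\dim\operatorname{span}(C\cap S)\le n-2$, and which produces an honest zero on $C$. Finally, the ``main obstacle'' you anticipate --- pole/zero bookkeeping on the normalization combining the valuation bounds of Proposition~\ref{P:codim_E_regular} with intersection numbers $\tilde C\cdot E_s$ --- is not needed: once $H'\supseteq C\cap S$, the curve $C$ is complete in $X-(S-(S\cap H'))$, the differentials are honestly regular on the corresponding open part of $Y$, and Propositions~\ref{P:explicit_resultant_locus_g0} and~\ref{P:explicit_resultant_locus_g1} apply with no degree count.
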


\begin{proof}
We first note that hyperplane sections are linearly equivalent, so if $H'$ is another hyperplane section then there is a function $f\in \HH^0(X,\sO_X(\floor{\frac{m}{s}}(H_X-H'))$ such that $f\omega_1,f\omega_2\in \HH^0(X,\hat{S}^m\Omega^1_X(-\floor{\frac{m}{2}} H')$. Then $\res(\omega_1,\omega_2)$ and $\res(f\omega_1,f\omega_2)$ will only differ by components contained in $H_X$ and $H'$.

Let us first deal with $C$ of genus $0$. Suppose we are not in case (3), so $C\cap S$ does not span $\PP^n$.  We can choose a hyperplane $H'$ that contains the span. Since there are only finitely many possibilities for $C\cap S$, we can choose $H'$ from a finite collection. For instance, if $S$ as a whole spans $\PP^n$, it is sufficient to consider all hyperplanes spanned by singularities.

Setting $S'=S-(S\cap H')$, we see that $C$ is a complete curve in $X-S'$, so application of Proposition~\ref{P:explicit_resultant_locus_g0} yields that the proper transform of $C$ to $Y$ lies in $\res(\pi^* f\omega_1,\pi^* f\omega_2)$, so $C\in\res(f\omega_1,f\omega_2)$. Since $\res(\omega_1,\omega_2)$ and $\res(f\omega_1,f\omega_2)$ differ by a predetermined set of hyperplane sections, we see that we are in case (1) or (2).

For a genus $1$ curve $C$ for which $C\cap S$ spans a linear space of dimension at most $n-2$ then we can choose a point $P$ on $C$ that is not in $S$ and consider a hyperplane $H'$ that contains $P$ and $C\cap S$. By the same argument as above, we see that $C$ must lie in $\res(f\omega_1,f\omega_2)$. 

If the codimension of the linear span of $C\cap S$ inside the linear span of $C$ is at least $2$, we can choose our hyperplane so that $C$ is not contained in it, forcing $C$ to lie in $\res(\omega_1,\omega_2)$. If $C\cap S$ consists of just one point, then this is surely the case, since a genus $1$ curve is not a line.

If the span of $C\cap S$ is of dimension at most $n-2$ and $C$ spans a space that is at most one dimension more, then $C$ is a component of a hyperplane section of $X$. This bounds its degree.
\end{proof}

\begin{rmk}
Corollary~\ref{C:explicit_resultant_locus}, case (5) is perhaps a little disappointing, but it still accomplishes a significant reduction: any genus $1$ curve not in $\res(\omega_1,\omega_2)$ that passes through at most $n-1$ singularities must be a component of a hyperplane section. 
The space of hyperplane sections is finite-dimensional and for a section to have a genus $1$ component it must be highly singular or reducible. These conditions define $0$-dimensional loci in the space of hyperplane sections of $X$, which can, at least in principle, be determined explicitly. 
\end{rmk}

\section{Applications to complete intersections of quadrics}
\label{s:QuadricCIs}

In this section we consider surfaces $X\subset \PP^n$ that are complete intersections of $n-2$ quadratic equations, with $\ell$ isolated $A_1$ singularities. In this case we have
\[K^2=c_1(Y)^2=(n-5)^2 2^{n-2},\; \chi=c_2(Y)=(n^2-7n+16)2^{n-3},\]
and that $Y$ is of general type if $n\geq 6$ by Lemma~\ref{L:Y_general_type}. By~\eqref{E:chiYSm} we have
\[\chi(Y,S^m\Omega^1_Y)=\frac{1}{3}2^{n-5}\big(
2(n^2-13n+34)m^3-6(n^2-7n+16)m^2-(5n^2-41n+98)m+3n^2-27+66
\big).\] 
Theorem~\ref{thm:main} gives that for $\ell \geq \ell_{\mathrm{min}}(n)$, we have regular symmetric differentials on $Y$ for sufficiently large $m$, where $\ell_{\mathrm{min}}(n)$ is defined by
\[
\begin{array}{c||c|c|c|c|c}
n&6&7&8&9&\geq 10\\
\hline
\ell_\mathrm{min}(n)&73&145&217&145&0\\
\end{array}\]
 In fact, using Lemma~\ref{L:also_hyperplane_vanishing}, we also have differentials vanishing along an ample divisor. Hence, using Proposition~\ref{P:foliation_quasihyperbolicity}, we see that such $Y$ are algebraically quasihyperbolic.

One concrete example is Theorem~\ref{thm:magicsquares} on the surface $\Xms\subset\PP^8$ with $\ell=256$ singularities of type $A_1$. Since $\ell > \ell_\mathrm{min}(8)=217$, the surface is algebraically quasihyperbolic. In fact, using \eqref{E:h0Yestimate_chi} we find that for $m\geq 47$ there are global sections and that $\HH^0(\Yms,\Omega^1_{\Yms})\geq 8448$. Unfortunately, $\Xms$ is out of range of current computational techniques to explicitly determine $\hat{S}^m\Omega^1_{\Xms}$, so we cannot apply the methods from Corollary~\ref{C:explicit_resultant_locus} get an explicit description of the locus of special curves.

As a computationally more accessible example, let us consider the projective surface $X=\Xpc$ that parametrizes \emph{perfect cuboids}, i.e., bricks with all sides $x_1,x_2,x_3$, diagonals $y_1,y_2,y_3$, and body diagonal $z$ rational. The surface is a complete intersection in $\PP^6$, described by the quadratic equations
\[X\colon
\left\{
\begin{aligned}
y_1^2&=x_2^2+x_3^2,\\
y_2^2&=x_3^2+x_1^2,\\
y_3^2&=x_1^2+x_2^2,\\
z^2&=x_1^2+x_2^2+x_3^2.
\end{aligned}\right.
\]
Its singular locus $S$ consists of $\ell=48$ singularities of type $A_1$, so algebraic hyperbolicity does not follow immediately for its minimal desingularization $Y$. However, applying Proposition~\ref{C:regdif_lowerbound_quasiproj} with $r=48-13$, we find that
\[h^0(Y-(E_1\cup\cdots\cup E_r),S^m\Omega^1_Y)=\frac{1}{108}m^3+O(m^2)\]
and hence that there are only finitely many curves of genus $0$ or $1$ on $X$ that pass through at most $13$ singularities. The lower bound based on Euler characteristics only turns positive for $m\geq 862$; a value not within range for explicit computation.

For $m=2$, we find via explicit computation (see \cite{electronic}) that $h^0(X,\hat{S}^2\Omega^1_X)=13$, with generators as listed in Table~\ref{tbl:sym2gens}. An indication that the $\omega_i$ are regular on $X-S$ is that the denominators listed are supported on $y_1y_2y_3z=0$, which is the branch locus of the projection on $(x_1:x_2:x_3)$.

Note that $\chi(X,\hat{S}^2\Omega^1_X)=7$, so even with the assumption that $h^2(X,\hat{S}^2\Omega^1_X)=0$, the Euler characteristic underestimates the dimension of the space of global sections. Furthermore, $\omega_7$ vanishes along $H\colon x_1=0$ and $\langle\omega_7\rangle=\HH^0(X,(\hat{S}^2\Omega^1_X)(-H))$. This means that $\hat{S}^2\Omega^1_X$ admits sections vanishing along any hyperplane. 
\begin{table}
\[\begin{aligned}
\omega_1&=\frac{x_2x_3}{y_3^2z^2} (dx_2)^2-\frac{2}{z^2}dx_2dx_3+\frac{x_2x_3}{y_2^2z^2}(dx_3)^2\\
\omega_2&=\frac{x_3(y_1^2+y_3^2)}{y_1^2y_3^2z^2}(dx_2)^2-\frac{2x_2}{y_1^2z^2}dx_2dx_3-\frac{x_3}{y_1^2z^2}(dx_3)^2\\
\omega_3&=\frac{x_2(x_3^2-1)}{y_1^2y_3^2z}(dx_2)^2-\frac{2x_3}{y_1^2z}dx_2dx_3+\frac{x_2}{y_1^2z}(dx_3)^2\\
\omega_4&=\frac{1}{y_3^2z}(dx_2)^2-\frac{1}{y_2^2z}(dx_3)^2\\
\omega_5&=\frac{x_2}{y_1^2z^2}(dx_2)^2+\frac{2x_3}{y_1^2z^2}dx_2dx_3-\frac{x_2(y_1^2+y_2^2)}{y_1^2y_2^2z^2}(dx_3)^3\\
\omega_6&=\frac{x_3}{y_1^2z}(dx_2)^2-\frac{2x_2}{y_1^2z}dx_2dx_3+\frac{x_3(x_2^2-1)}{y_1^2y_2^2z}(dx_3)^2\\
\omega_7&=\frac{1}{y_1y_2y_3z^2}\Big((x_3^2+1)(dx_2)^2-2x_2x_3dx_2dx_3+(x_2^2+1)(dx_3)^2\Big)\\
&x_2\omega_7,x_3\omega_7,y_1\omega_7,y_2\omega_7,y_3\omega_7,z\omega_7
\end{aligned}\]
\caption{Generators for $\HH^0(X,\hat{S}^2\Omega^1_X)$; given on affine patch $x_1=1$}
\label{tbl:sym2gens}
\end{table}

As we show below, the foliation determined by $\omega_7$ can be described sufficiently explicitly to obtain stronger results, but first we sketch how Proposition~\ref{P:explicit_resultant_locus_g0} can be used to obtain information on the genus $0$ curves on $X$ without solving differential equations. One can check via the approach in Section~\ref{s:conditions_to_extend_into_E} that $\pi^*\omega_1$ is regular along the exceptional curves on $Y$ over the singularities with $y_1=0$. We can then compute $\res(\omega_1,y_1\omega_7)$ to conclude that $X$ contains no genus $0$ curves that pass only through singularities for which $y_1=0$. By symmetry, the same holds for $y_2=0$ and $y_3=0$, and since every node on $X$ satisfies $y_1y_2y_3=0$, we see that any genus $0$ curve has to pass through at least two distinct nodes.

For the proof of Theorem~\ref{thm:CuboidIntro} we need some information on the curves that do lie on $X$. The list of curves in the lemma below already appears in \cite{vanLuijk2000}.
\begin{lemma}\label{L:cuboid_hyperplanecurves}
Suppose $L\subset X$ is a curve of genus at most $1$, contained in a hyperplane $H$ spanned by nodes of $X$. Then $L$ is one of the following curves.
\begin{itemize}
	\item $8$ genus $0$ curves satisfying $x_1^2+x_2^2+x_3^2=0$, defined over $\Q(i)$,
	\item $24$ genus $0$ curves satisfying $x_1x_2x_3=0$, defined over $\Q(i)$,
	\item $24$ genus $1$ curves satisfying one of three equations of the form $2x_1^2+x_2^2+x_3^2=0$, defined over $\Q(i,\sqrt{2})$, each through three non-collinear singularities of $X$,
	\item $36$ genus $1$ curves satisfying one of three equations of the form $x_1^4-x_2^4$, defined over $\Q(i)$ or $\Q(\sqrt{2})$, each through three or four non-collinear singularities of $X$.
\end{itemize}
\end{lemma}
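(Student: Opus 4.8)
The plan is to descend to the plane via the finite degree-$16$ morphism
\[
\pi\colon X\longrightarrow \PP^2,\qquad (x_1:x_2:x_3:y_1:y_2:y_3:z)\mapsto (x_1:x_2:x_3),
\]
which realizes $k(X)$ as the $(\Z/2)^4$-Galois extension $k(\PP^2)\bigl(\sqrt{q_0},\sqrt{q_1},\sqrt{q_2},\sqrt{q_3}\bigr)$, where $q_0=x_1^2+x_2^2+x_3^2$ and $q_i=\sum_{j\neq i}x_j^2$; its branch divisor is $B=C_0+C_1+C_2+C_3$ with $C_i=\{q_i=0\}$, so that $C_0$ is a smooth conic and each $C_i$ ($i\geq1$) is a pair of lines through the $i$-th coordinate vertex. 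Given $L$ as in the statement, the first step is the observation that $L$ is an irreducible component of the hyperplane section $X\cap H$, hence $\deg L\leq\deg X=16$ (only membership in some hyperplane is used here, not that $H$ is node-spanned). In particular $\bar L:=\pi(L)$ is an irreducible plane curve and, after normalizing, $L$ is dominated by a connected component of the pulled-back cover $\pi^{-1}(\bar L)\to\bar L$.

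The second step is to pin down $\bar L$. The mechanism is that on special $\bar L$ one or more of the $q_i$ restricts to the square of a linear form, so the corresponding $\sqrt{q_i}$ becomes rational on $\bar L$ and that $\Z/2$-factor splits off entirely, or two of the $q_i$ restrict to the same form up to squares and two $\Z/2$-factors fuse; the surviving honestly ramified cover is then a $(\Z/2)^k$-cover of $\bar L$ with a small branch divisor, and I would use Riemann--Hurwitz for such covers to bound its genus below in terms of $\deg\bar L$ and the intersection multiplicities of $\bar L$ with $B$. Carrying this out, a genus-$\leq 1$ component can occur only when $\bar L$ is a line or a conic in very special position: when $\bar L$ is the conic $C_0$; one of the six lines comprising $C_1\cup C_2\cup C_3$; one of the three coordinate lines $\{x_i=0\}$; one of the six diagonal lines $\{x_j=\pm x_k\}$; or one of the three conics $\{2x_i^2+x_j^2+x_k^2=0\}$. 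One must check that every other line (through at most one coordinate vertex, or in general position) and every other conic (in particular those merely tangent to $B$) yields only components of geometric genus $\geq2$ --- in fact $5$ or $17$ for the relevant line families.

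For the third step I would, for each $\bar L$ on this finite list, compute $X\cap\pi^{-1}(\bar L)$ directly: the sign ambiguities from the split factors become honest $\pm$-choices peeling off copies of $\PP^1$, a fused pair contributes one more $\pm$-choice, and the residual extension is governed by the restriction of a single $q_i$ to $\bar L\cong\PP^1$, a binary quartic cutting out a double cover of genus $1$ --- degenerating to a union of rational curves exactly when $\bar L=C_0$ or $\bar L=\{x_i=0\}$, where all four $q_i$ become squares. Counting the sign-choices gives the multiplicities: $8$ rational curves over $C_0$ and $8$ over each $\{x_i=0\}$ (the $32$ conics, satisfying $x_1^2+x_2^2+x_3^2=0$ and $x_1x_2x_3=0$ respectively), and for genus $1$, $2$ curves over each line of $C_1\cup C_2\cup C_3$ together with $4$ over each diagonal line (the $36$ curves with $x_j^4=x_k^4$) and $8$ over each conic $\{2x_i^2+x_j^2+x_k^2=0\}$ (the remaining $24$). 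The field of definition of each curve is read off from the radicals and signs that appear, yielding $\Q(i)$, $\Q(\sqrt2)$ or $\Q(i,\sqrt2)$; and that each of the $92$ curves lies on a node-spanned hyperplane --- a coordinate hyperplane $\{x_i=0\}$ or $\{z=0\}$ for the conics, and for each genus-$1$ curve a hyperplane through the three or four nodes it meets extended by further nodes --- is verified against the explicit list of the $48$ nodes of $X$.

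The main obstacle is the second step. Because the cover $\pi^{-1}(\bar L)\to\bar L$ becomes disconnected precisely on the curves that contribute low genus, the genus of a component is controlled by the monodromy of the cover, not by the branch locus alone; so the argument must track which of the four $\Z/2$-extensions survive, and with what correlations, as $\bar L$ ranges over lines and conics, and must rule out the various tangency and vertex-incidence configurations that could a priori also drop the genus. This is a finite but delicate case analysis; once the list of candidate $\bar L$ is in hand the third step is bookkeeping, and the resulting list reproduces the one of van Luijk in~\cite{vanLuijk2000}.
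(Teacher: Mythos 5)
Your overall strategy---descending through the degree-$16$ multiquadratic cover $\pi\colon X\to\PP^2$ and analysing which plane curves have a pullback component of low genus---is genuinely different from the paper's proof of this lemma, and it is closer in spirit to what the paper does later (via the form $\omega_7=\phi^*(\eta)/(y_1y_2y_3z^2)$ and Lemma~\ref{L:integral_curve_classification_on_P2}) in the proof of Theorem~\ref{thm:CuboidIntro}. The paper's own proof of Lemma~\ref{L:cuboid_hyperplanecurves} is much blunter: it uses the hypothesis that $H$ is spanned by nodes to reduce to a \emph{finite} computation, enumerating the fewer than $60000$ node-spanned hyperplanes (in $2442$ orbits under the order-$384$ automorphism group), decomposing each $H\cap X$ by machine, and reading off the components of genus at most $1$. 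You explicitly discard the node-spanned hypothesis, keeping only $\deg L\leq 16$, and thereby set yourself a strictly harder classification problem.

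That is where the genuine gap lies: your second step, which is the entire mathematical content of the argument, is asserted rather than proved, and you say so yourself. To conclude that $\bar L=\pi(L)$ must be one of your listed lines and conics, you must rule out \emph{every} irreducible plane curve of degree up to $16$ whose pullback acquires a component of geometric genus at most $1$. This is not a soft Riemann--Hurwitz estimate: the cover $\pi^{-1}(\bar L)\to\bar L$ splits exactly when $q_i|_{\bar L}$ (or a product of the $q_i$) has even divisor, i.e.\ when $\bar L$ is everywhere tangent to the corresponding part of the branch octic $B$, and such totally tangent curves of higher degree exist in positive-dimensional families (e.g.\ cubics everywhere tangent to the conic $C_0$, or to a pair of the degenerate conics $C_i$). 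A genus-$1$ plane curve $\bar L$ admitting an \emph{unramified} residual $(\Z/2)^k$-cover would contribute a genus-$1$ curve upstairs of degree $3\cdot 2^k\leq 16$, which your degree bound does not exclude for $k\leq 2$; so you must show that no such $\bar L$ makes three independent combinations of the $q_i$ simultaneously square on it, and likewise control the rational $\bar L$ of degree $3$ through $16$ with the requisite tangency and vertex-incidence patterns. Nothing in the proposal does this, and it is not obviously a short argument---indeed the paper avoids it entirely by exploiting the node-spanned hypothesis. Your first and third steps (the degree bound, and the explicit decomposition of $\pi^{-1}(\bar L)$ for each candidate $\bar L$, with the sign-counting giving $8+24+24+36$ curves and their fields of definition) are fine and match the stated list, but without a complete second step the lemma is not established.
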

\begin{proof}
The singular locus $S$ consists of $48$ points, so there are at most $\binom{48}{6}$ hyperplanes $H$ to be considered. As it turns out, there are somewhat less than $60000$ of them, forming $2442$ orbits under the $384$ obvious linear automorphisms generated by the simultaneous permutation action on $x_1,x_2,x_3$ and $y_1,y_2,y_3$ and the sign changes on each variable. We establish the lemma by considering representatives of each orbit, decomposing $H\cap X$, and checking which components are curves of genus at most $1$. See \cite{electronic} for a transcript of the computations. We find the list stated. Note that all the curves are nonsingular, that the genus $0$ curves we find are plane conics and that the genus $1$ curves we find are complete intersections of quadrics in $\PP^3$, each through at least three, non collinear singularities.
\end{proof}

\begin{lemma}\label{L:integral_curve_classification_on_P2}
	Let $\eta$ be the degree two symmetric differential form on $\PP^2$ that on the affine patch $(1:x_2:x_3)$ is given by $(x_3^2+1)(dx_2)^2-2x_2x_3dx_2dx_3+(x_2^2+1)(dx_3)^2$. The integral curves of $\eta$ (that is to say, curves in $\PP^2$ onto which $\eta$ pulls back to an identically vanishing symmetric differential form) are the conic $x_1^2+x_2^2+x_3^2=0$ and the tangent lines to it, given by $Ax_1+Bx_2+Cx_3=0$ with $A^2+B^2+C^2=0$.
\end{lemma}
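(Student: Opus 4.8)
The plan is to first rewrite $\eta$ in homogeneous coordinates, where its structure becomes transparent. With $\omega_{ij}=x_i\,dx_j-x_j\,dx_i$ and homogeneous coordinates $(x_1:x_2:x_3)$ chosen so that $x_1=1$ recovers the given affine patch, a direct expansion shows that $\eta=\omega_{12}^2+\omega_{13}^2+\omega_{23}^2$, which is manifestly invariant under the permutation and sign-change action on the coordinates. Viewing $\eta$ pointwise as a binary quadratic form in the cotangent directions, its discriminant is a scalar multiple of $x_1^2+x_2^2+x_3^2$, so away from the conic $Q\colon x_1^2+x_2^2+x_3^2=0$ the form $\eta$ factors into two \emph{distinct} linear forms, hence cuts out two distinct tangent directions at every such point, while along $Q$ it is a (nonzero) perfect square.

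For the ``these are integral'' direction I would treat lines and the conic separately. For a line $L$, parametrize it as $t\mapsto[a+tb]$ with normal vector $n=a\times b$; then each $\omega_{ij}|_L$ equals, up to sign and reordering, a component of $n$ times $dt$, so $\eta|_L=(n_1^2+n_2^2+n_3^2)\,dt^2$. Thus $\eta|_L\equiv 0$ exactly when $n\cdot n=0$, i.e. exactly when $L$ is tangent to $Q$ (the dual conic of the standard conic is again $A^2+B^2+C^2=0$); this is precisely the stated family $Ax_1+Bx_2+Cx_3=0$ with $A^2+B^2+C^2=0$. For the conic itself, one option is to note that at each $p\in Q$ the form $\eta(p)$ is the square of a linear form $\ell(p)$, and that $\ell(p)$ must annihilate $T_pQ$ because the tangent line to $Q$ at $p$ is one of the integral lines just found; hence $\eta$ restricts to $0$ on $Q$. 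Alternatively one simply substitutes a rational parametrization of $Q$, e.g. $x=(1-t^2,\,i(1+t^2),\,2t)$, into $\omega_{12}^2+\omega_{13}^2+\omega_{23}^2$ and checks directly that the result vanishes.

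The substantive part is the converse: no further integral curves occur. Let $C$ be a (reduced, irreducible) integral curve with $C\neq Q$, and pick $p\in C$ that is a smooth point of $C$ and not on $Q$; such points are dense, as $C\cap Q$ is finite. On a neighborhood of $p$ the form $\eta$ factors as $\eta=A\,\omega^{+}\omega^{-}$ with $A$ a unit and $\omega^{+},\omega^{-}$ nowhere-vanishing holomorphic $1$-forms defining distinct directions at $p$; each $\omega^{\pm}$ determines a holomorphic foliation with a unique leaf through $p$. Since $\mathcal O_{C,p}$ is a domain and $(\omega^{+}|_C)(\omega^{-}|_C)=0$ there, one of $\omega^{\pm}$ restricts identically to $0$ on $C$, so near $p$ the curve $C$ is the leaf of that foliation through $p$. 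But the two lines through $p$ tangent to $Q$ are integral curves with distinct tangent directions at $p$, hence they are precisely the two leaves through $p$; therefore $C$ agrees with one of these tangent lines near $p$, and being irreducible, $C$ equals it. I expect this converse to be the main point requiring care — specifically, matching the purely local uniqueness statement for leaves of $\omega^{\pm}$ with the global fact that both local leaves extend to honest tangent lines of $Q$ — while the surrounding work (the explicit local factorization of $\eta$ off $Q$ and the genericity bookkeeping for $p$) is routine.
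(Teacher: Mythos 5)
Your proof is correct and follows essentially the same route as the paper's: verify directly that the conic and its tangent lines are integral curves, then use the fact that at a smooth point of an integral curve off the conic the degree-two form $\eta$ singles out at most two tangent directions, which are precisely those of the two tangent lines to the conic through that point. Your homogeneous identity $\eta=\omega_{12}^2+\omega_{13}^2+\omega_{23}^2$ (with $\omega_{ij}=x_i\,dx_j-x_j\,dx_i$) streamlines the verification step where the paper substitutes explicit parametrizations, and your local factorization plus leaf-uniqueness argument makes precise the paper's terse ``must be one of the tangent lines locally and therefore globally,'' but the underlying method is the same.
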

\begin{proof}
	It is straightforward to check that the given curves are indeed integral curves for $\eta$.
	
	For instance, for $C\neq 0$ we use the parametrization $(x_1:x_2:x_3)=(1:t:(-Bt-A)/C)$. On that line we have $dx_2=dt$ and $dx_3=-(B/C)dt$. Substitution into $\eta$ yields $(A^2+B^2+C^2)dt^2/C^2$.
	For the conic we check similarly through the parametrization $(x_1:x_2:x_3)=(\sqrt{-1}:(1-t^2)/(t^2+1):-2t/(t^2+1))$.

	Any point not on $x_1^2+x_2^2+x_3^2=0$ has exactly two tangent lines to the conic passing through it. Since $\eta$ is of degree $2$, an integral curve passing through a point $P$ must have one of at most two tangent directions. It follows that an integral curve to $\eta$ that is nonsingular at a point $P$ outside $x_1^2+x_2^2+x_3^2=0$ must be one of the tangent lines locally and therefore globally. This is sufficient to establish the lemma.
\end{proof}

\begin{proof}[Proof of Theorem~\ref{thm:CuboidIntro}]
Suppose that $L\subset X$ is a genus $0$ curve such that the singularities of $X$ it passes through are contained in a hyperplane $H$.
Let $h$ be the linear form defining $H$. Then $\omega=h\omega_7\in \HH^0(X,\hat{S}^2\Omega^1_X)$ vanishes along $H$. 
By Corollary~\ref{C:vanishing_on_hyperplane}, for any singularity $s$ of $X$ in $H$, we have that $\pi^*\omega$ is regular on the exceptional curve $E_s\subset Y$. Hence, we see that $\omega$ pulls back to $0$ on $L$.

We observe that $\phi\colon X\to\PP^2$ given by $(x_1:x_2:x_3)$ expresses $X$ as a finite, multiquadratic cover of $\PP^2$ of degree $16$, ramified over $(x_1^2+x_2^2)(x_1^2+x_3^2)(x_2^2+x_3^2)(x_1^2+x_2^2+x_3^2)=0$. For
\[\eta=(x_3^2+1)(dx_2)^2-2x_2x_3dx_2dx_3+(x_2^2+1)(dx_3)^2\]
and $h=x_1$, we see that $\omega=\phi^*(\eta)/(y_1y_2y_3z^2)$. It follows that $L$ must lie in $H$ or that $\phi(L)$ is a solution curve to $\eta$.
Lemma~\ref{L:cuboid_hyperplanecurves} lists the curves contained a hyperplane $H$ spanned by singularities.

The alternative is that $\phi\colon L\to\phi(L)$ expresses $L$ as a cover of one of the curves classified by Lemma~\ref{L:integral_curve_classification_on_P2}. The curves that cover $x_1^2+x_2^2+x_3^2=0$ are contained in the hyperplane $z=0$ and are included in Lemma~~\ref{L:cuboid_hyperplanecurves}. Therefore, let us assume that $\phi(L)$ is given by $Ax_1+Bx_2+Cx_3=0$, with $A^2+B^2+C^2=0$. Note that $X$ is a compositum
\[\xymatrix{
&X\ar[dr]^{(x_1:x_2:x_3:z)}\ar[dl]_{(x_1:\cdots:y_3)}\ar[dd]^\phi\\
X_y\ar[dr]&&X_z\ar[dl]\\
&\PP^2
}\]
and that $X_z\to\PP^2$ is ramified over $x_1^2+x_2^2+x_3^2=0$. Since $\phi(L)$ is tangent to this locus, we see that $\phi(L)$ pulls back to two components on $X_z$. 
On $X_y$, generically $\phi(L)$ pulls back to a nonsingular complete intersection of quadrics in $\PP^4\subset\PP^5$, so is a canonical genus $5$ curve. It follows in those cases that $L$ itself is isomorphic to this genus $5$ curve, contradicting that $L$ has genus $0$. Riemann-Hurwitz shows that this can only be avoided if $\phi(L)$ passes through a singular point of the branch locus of $X_y\to \PP^2$. However, that branch locus consists of tangent lines to $x_1^2+x_2^2+x_3^2=0$, so this only happens if $\phi(L)$ is one of the components of $(x_1^2+x_2^2)(x_1^2+x_3^2)(x_2^2+x_3^2)=0$. But such curves $L$ are contained in a hyperplane (such as $x_1+ix_2=0$). These are included in the list of curves in Lemma~\ref{L:cuboid_hyperplanecurves}, and these give genus $1$ curves.

Next we show that $X$ does not contain genus $1$ curves $C$ for which $X\cap S$ generates a linear space of codimension at least $2$ in the linear space generated by $C$. We argue by contradiction and assume $C$ is such a curve. Then we can choose a point $P$ on $C$ outside of the singular locus on $X$, such that $C$ is not contained in the linear span on $C\cap S$ and $P$. That means we can choose a hyperplane $H$ that contains $C\cap S$ and $P$, but not $C$ entirely. Let $h$ be the linear form defining $H$. By Corollary~\ref{C:vanishing_on_hyperplane}, the differential $h\omega_7$ pulls back to a regular one with a zero at $P$ on $C$, so it must pull back to $0$. However, we have constructed $H$ to not contain all of $C$, so $\phi\colon C\to \phi(C)$ expresses $C$ as a cover of one of the curves classified by Lemma~\ref{L:integral_curve_classification_on_P2}. As mentioned above, we do find some genus $1$ curves, but these are of degree $4$ and $C\cap S$ is easily checked to be of codimension at most $1$ in the linear space generated by $C$.

As special cases, note that a genus $1$ curve cannot be contained in a $1$-dimensional linear space, so any genus $1$ curve on $X$ must pass through at least two singularities.

Furthermore. if $C\cap S$ consists of at most $5$ points, we see that $C$ must be contained in a hyperplane section, limiting the degree of $C$ to $16$.
\end{proof}

\section{Applications to nodal surfaces in $\PP^3$}
\label{s:nodalsurfaces}

Let $X\subset \PP^3$ be a hypersurface of degree $d\geq 5$ with $\ell$ singularities of type $A_1$ and let $Y$ be its minimal resolution. We saw in Example~\ref{Ex:NodalInP3} that for $\ell>\frac{9}{4}(2d^2-5d)$ and sufficiently large $m$ we have $h^0(Y,S^m\Omega^1_Y) >0$. There are a few well-known surfaces of low degree $d$ and with many $A_1$ singularities.

\begin{center}
\begin{tabular}{l|c|c}
	name&$d$&$\ell$\\
	\hline
	Barth's sextic surface \cite{Barth1996}&6&65\\
	Barth's decic surface \cite{Barth1996}&10&345\\
	Sarti's surface \cite{Sarti2001}&12&600
\end{tabular}
\end{center}

Lemma~\ref{L:also_hyperplane_vanishing} allows us to conclude that Barth's decic surface and Sarti's surface are algebraically quasihyperbolic. In fact, from our lower bounds we find the lowest $m$ for which there are guaranteed to be global sections. For example, for Barth's decic surface we find that 
\[h^0(Y,S^m\Omega^1_Y)\geq \begin{cases}
\frac{2}{9}m^3-\frac{538}{3}m^2-82m+85&\text{ for } m\equiv 0\pmod3\\
\frac{2}{9}m^3-\frac{538}{3}m^2-82m+\frac{991}{9}&\text{ for } m\equiv 1\pmod3\\
\frac{2}{9}m^3-\frac{538}{3}m^2-\frac{472}{3}m+\frac{200}{9}&\text{ for } m\equiv 2\pmod3,
\end{cases}\]
These bounds turn positive when $m\geq160$, where we find $h^0(Y,S^{160}\Omega^1_Y)\geq 15755$. For Sarti's surface, a similar computation shows that the bounds turn positive when $m\geq 28$ and that $h^0(Y,S^{28}\Omega^1_Y)\geq 7646$. Neither of these values is within the range of practical computation to explicitly determine the locus of rational and genus 1 curves.

For Barth's decic surface $X=X_{10}$, Magma~2.24-6~\cite{magma} is just about capable of computing the graded module representing $S^2(\Omega^1_X)^{\vee\vee}$ over a finite field. We did so over $\F_{10009}$ and $\F_{50021}$. In both cases this took about 8 hours of computations on an Intel Xeon CPU E5-2660, 2.20GHz, using most of the 64Gb main memory. We find in both cases that $h^0(X,\hat{S}^2\Omega^1_X (-H))=7$ and that $h^0(X,\hat{S}^2\Omega^1_X)=7\cdot4+27=55$. We expect that these results are representative of what happens in characteristic $0$, which one could confirm by rational reconstruction (see below for a case where we actually executed this procedure). With that in place, the following results would be within reach.
\begin{itemize}
	\item By Proposition~\ref{P:codim_E_regular} and Proposition~\ref{C:regdif_lowerbound_quasiproj} we can choose any $17$ singularities and find $55-3\cdot17=4$ differentials that extend to the exceptional components above them, by the method explained in Section~\ref{s:conditions_to_extend_into_E} (and we find we cannot extend them into all $345$ components). Hence, Proposition~\ref{P:explicit_resultant_locus_g0} would give an approach to determining all genus $0$ curves on $X$ that pass through at most $17$ singularities, at the cost of some significant combinatorics.
	\item We can apply Corollary~\ref{C:explicit_resultant_locus} to find an explicit description of the genus $0$ curves $C$ on $X$ for which $C\cap X$ does not span all of $\PP^3$, in particular, all curves passing through at most $3$ singularities, as well as all genus $1$ curves that pass through at most $2$ singularities.
\end{itemize}
We do not pursue these particular results, but instead demonstrate similar results on the sextic surface $X_6$, as described below.

\subsection{Barth's sextic surface}
\label{ss:BarthSextic}
As an illustration, we perform similar computations for Barth's sextic surface $X_6$, defined over $\Q(\sqrt{5})$, with $\phi=\frac{1}{2}(\sqrt{5}+1)$ by
\[
X_6\colon 4(\phi^2x^2-y^2)(\phi^2y^2-z^2)(\phi^2z^2-x^2)-(1+2\phi)(x^2+y^2+z^2-w^2)^2=0.
\]
We find the following genus $0$ curves on $X_6$:
\begin{itemize}
	\item $6$ degree $1$ curves, each through $5$ singularities,
	\item $6$ degree $2$ curve, each through $10$ singularities,
	\item $15$ degree $6$ plane curves, each through $10$ singularities.
\end{itemize}
In addition we find the following genus $1$ curves on $X_6$:
\begin{itemize}
	\item $20$ degree $3$ plane curves, each through $15$ singularities,
	\item $10$ pairs of degree $3$ plane curves through $9$ singularities, defined over $\Q(\sqrt{5},i)$,
	\item $48$ degree $5$ plane curves, each through $10$ singularities,
	\item $15$ degree $4$, non-planar curves, each through $16$ singularities.
\end{itemize}
This list includes all genus $0,1$ curves on $X_6$ that lie in planes spanned by nodes.  

\begin{proof}[Proof of Theorem~\ref{thm:BarthIntro}]
As it turns out, computing the graded module representing
$S^2(\Omega^1_X)^{\vee\vee}$ directly in characteristic $0$ is not quite feasible with Magma~2.24-6\cite{magma}. Over a finite field, however, it can do so in only a matter of minutes, even for a $50$ digit characteristic for which $5$ is not a square in the prime field. We then use rational reconstruction to compute the trace and norm of each coefficient, and choose the conjugate in $\Q(\sqrt{5})$ that reduces to the coefficient. This allows us to lift the representations of the modules $M$, $M^\vee$, $M^{\vee\vee}$ together with the pairing matrices $A,B$ as in Section~\ref{S:comp_reg_diff}.

In order to verify that these reconstructed modules indeed have the right properties, we check that the matrices $A,B$ define well-defined pairings, i.e., that $AK_{M^\vee}\subset K_M$ etc.
This establishes that $M^{\vee\vee}_0$ indeed encodes sections of $S^2\Omega^1_X$ that are regular outside of the locus where an appropriate $3\times 3$ submatrix $A'$ is singular. Thus, if we establish that the base locus of the appropriate $3\times 3$ minors of $A$ is supported on the singular locus of $X$, then we establish that $M^{\vee\vee}_0$ determines global sections of $\hat{S}^2 \Omega^1_X$. Magma is capable of directly verifying in characteristic $0$ that the reconstructed module $M^{\vee\vee}$ equals its double dual, establishing that it is reflexive. See \cite{electronic} for a transcript of the computations verifying these claims.
We find that $h^0(X_6,\hat{S}^2\Omega^1(-H))=3$ and $h^0(X_6,\hat{S}^2\Omega^1)=15$. The forms themselves are a little unappetizing to display here. 

Let $\omega_1,\omega_2,\omega_3$ span $H^0(X,\hat{S}^2\Omega^1(-H))$. We apply Corollary~\ref{C:explicit_resultant_locus} to get information on genus $0$ curves $L$ for which $L\cap S$ is contained in a hyperplane and on genus $1$ curves $C$ for which $C\cap S$ spans a space of codimension at most one in the linear space spanned by $C$.  We find that outside of the vanishing locus of some of the $3\times 3$ minors $\det(A')$ of $A$, the locus
$\res(\omega_1,\omega_2)\cap \res(\omega_1,\omega_3)\cap \res(\omega_2,\omega_3)$ is $0$-dimensional. That shows that any such genus $0$ curve $L$ needs to lie in a plane spanned by singularities from $S$, or in the locus defined by $\det(A')=0$. 
This leaves us with analyzing finitely many loci. We can use the automorphism group of $X$ to significantly reduce the amount of computation required. We find the curves listed. See \cite{electronic} for a transcript of computations.
Interestingly, the loci $\det(A')=0$ also yield some non-planar genus $1$ curves passing through singularities that span $\PP^3$.
\end{proof}

\begin{remark} We mention here that in the masters thesis \cite{alaei2015} completed under the supervision of the first author, the same $27$ genus $0$ curves are already mentioned, and a similar argument to the one here is used to prove the slightly weaker result that any genus $0$ curve on $X_6$ has to pass throught at least one node.
\end{remark}


\appendix
\section{Hirzebruch-Riemann-Roch for twists of symmetric powers}
\label{sec:Chern}

We record the result of standard calculations of Chern classes of certain sheaves needed in the body of the article.

Let $Y$ be a smooth projective surface over a field. By a \defi{vector sheaf of rank} $r$ we mean a locally free sheaf of rank $r$ on $Y$; when $r = 1$ we call such a sheaf a \defi{line bundle}. Let $\calE$ be a vector sheaf of rank $2$ on $Y$.  Let $c_1(\calE)$, $c_2(\calE)$ be the usual Chern classes of $\calE$.  Using the splitting principle~\cite[Remark~3.2.3]{Fulton}, we compute the Chern classes for the symmetric power $\calA=S^m\calE$, a rank $m+1$ locally free sheaf on $Y$:
\begin{equation}
\label{eq:ChernClassesSm}
\begin{split}
	c_1(\calA)=c_1(S^m\calE) 	&= \tbinom{m+1}{2}c_1(\calE), \text{ and}\\
	c_2(\calA)=c_2(S^m\calE)	&= \tfrac{1}{24}(3m + 2)(m + 1)m(m-1)c_1^2(\calE) + \tfrac{1}{6}m(m+1)(m+2)c_2(\calE).
\end{split}
\end{equation}
For any vector sheaf $\calA$ of rank $(m+1)$ and a line bundle $\calL$ on $Y$ we have
\begin{equation}
\label{eq:ChernClassesProd}
\begin{split}
	c_1(\calA\otimes\calL) 	&= c_1(\calA) + (m+1)c_1(\calL), \\
	c_2(\calA\otimes\calL) 	&= c_2(\calA) + m c_1(\calA)c_1(\calL) + \tbinom{m+1}{2}c_1^2(\calL);
\end{split}
\end{equation}
see~\cite[p.~55]{Fulton}. 

Let $\calFA$ be a vector sheaf of rank $r$ on $Y$. Writing $K = -c_1(T_Y)$ and $\chi = c_2(T_Y)$ for the Chern classes of the tangent bundle $T_Y$ of $Y$, the Hirzebruch--Riemann--Roch Theorem  gives the Euler characteristic of $\calFA$ in terms of Chern classes of $\calFA$ and $T_Y$ (see~\cite[Example~15.2.2]{Fulton}):
\begin{equation}
\label{eq:RR}
\begin{split}
	\chi(Y,\calFA) &= \left(-\frac{1}{2}c_1(\calFA) K + \frac{1}{2}c_1^2(\calFA) - c_2(\calFA) + \frac{r}{12}(K^2 + \chi) \right)[Y].
\end{split}
\end{equation}
Here $[Y]$ is the fundamental class of $Y$. Together with~\eqref{eq:ChernClassesSm} and~\eqref{eq:ChernClassesProd}, Hirzebruch--Riemann--Roch affords the Euler characteristic of $\calFA := S^m\calE\otimes \calL$ in terms of $c_1(\calE)$, $c_2(\calE)$, $c_1(\calL)$, $K$ and $\chi$:
\begin{equation}
\label{eq:RRSmEL}
\begin{split}
\chi(Y,S^m \calE\otimes \calL)&=\tfrac{1}{6}\big(
c_1(\calE)^2-c_2(\calE)
\big)m^3\\
&\hspace{1em}-\tfrac{1}{4}\big(
c_1(\calE)K-c_1(\calE)^2-2c_1(\calE)c_1(\calL)+2c_2(\calE)
\big)m^2\\
&\hspace{1em}+\tfrac{1}{12}\big(
K^2-3c_1(\calE)K+c_1(\calE)^2-6c_1(\calL)K\\
&\hspace{4em}+6c_1(\calE)c_1(\calL)+6c_1(\calL)^2-4c_2(\calE)+\chi
\big)m\\
&\hspace{1em}+\tfrac{1}{12}\big(K^2-6c_1(\calL)K+6c_1(\calL)^2+\chi
\big).
\end{split}
\end{equation}
We specialize this result in two different ways.  First, setting $\calE = \Omega^1_Y$ and $\calL = \calO_Y$ and using $c_1(\Omega^1_Y)=K$ and $c_2(\Omega^1_Y)=\chi$, we get
\begin{equation}\label{E:chiYSm}
\chi(Y,S^m\Omega^1_Y)=\tfrac{1}{12}\big(
2(K^2-\chi)m^3-6\chi m^2-(K^2+3\chi)m+K^2+\chi
\big).
\end{equation}
Second, if $E$ is an irreducible ($-2$)-curve on $Y$, then
for $\calE=\Omega^1_Y(\log(E))$ and $\calL_h=-hE$ we have $c_1(\calE)=K+E$, $c_2(\calE)=\chi-2$, and $c_1(\calL_h)=-hE$, as well as $EK=0$, leading to
\begin{equation}\label{E:chirel}
\chi(Y,S^m\calE\otimes\calL_h)=\chi(Y,S^m\Omega^1_Y)-(m+1)(h^2+hm-\tfrac{1}{2}m).
\end{equation}


\begin{bibdiv}
	\begin{biblist}
\bib{alaei2015}{thesis}{
	author={Alaei, N.},
	title={Symmetric Differential Forms on the Barth Sextic Surface},
	date={2015},
	type={Master's thesis},
	organization={Simon Fraser University},
	url={http://summit.sfu.ca/item/15314},
}

\bib{Atiyah1958}{article}{
	author={Atiyah, M. F.},
	title={On analytic surfaces with double points},
	journal={Proc. Roy. Soc. London. Ser. A},
	volume={247},
	date={1958},
	pages={237--244},
	issn={0962-8444},
	doi={10.1098/rspa.1958.0181},
}

\bib{Barth1996}{article}{
	author={Barth, W.},
	title={Two projective surfaces with many nodes, admitting the symmetries of the icosahedron},
	journal={J. Algebraic Geom.},
	volume={5},
	date={1996},
	number={1},
	pages={173--186},
	issn={1056-3911},
}

\bib{Beauville1980}{article}{
	author={Beauville, A.},
	title={Sur le nombre maximum de points doubles d'une surface dans ${\bf P}^{3}$ $(\mu (5)=31)$},
	language={French},
	conference={ title={Journ\'{e}es de G\'{e}ometrie Alg\'{e}brique d'Angers, Juillet 1979/Algebraic Geometry, Angers, 1979}, },
	book={ publisher={Sijthoff \& Noordhoff, Alphen aan den Rijn---Germantown, Md.}, },
	date={1980},
	pages={207--215},
	review={\MR {605342}},
}

\bib{Blache}{article}{
	author={Blache, R.},
	title={Chern classes and Hirzebruch-Riemann-Roch theorem for coherent sheaves on complex-projective orbifolds with isolated singularities},
	journal={Math. Z.},
	volume={222},
	date={1996},
	number={1},
	pages={7--57},
	issn={0025-5874},
	doi={10.1007/PL00004527},
}

\bib{Bogomolov}{article}{
	author={Bogomolov, F.},
	title={Families of curves on a surface of general type},
	language={Russian},
	journal={Dokl. Akad. Nauk SSSR},
	volume={236},
	date={1977},
	number={5},
	pages={1041--1044},
	issn={0002-3264},
}

\bib{BO}{article}{
	author={Bogomolov, F.},
	author={De Oliveira, B.},
	title={Hyperbolicity of nodal hypersurfaces},
	journal={J. Reine Angew. Math.},
	volume={596},
	date={2006},
	pages={89--101},
	issn={0075-4102},
}

\bib{electronic}{article}{
	author={Bruin, Nils},
	author={Thomas, Jordan},
	author={Va\'rilly-Alvarado, Anthony}
	title={Electronic resources for explicit symmetric differentials},
	date={2021},
	eprint={https://arxiv.org/src/1912.08908/anc},
}

\bib{CoskunRiedl}{article}{
   author={Coskun, I.},
   author={Riedl, E.},
   title={Algebraic hyperbolicity of the very general quintic surface in
   $\Bbb{P}^3$},
   journal={Adv. Math.},
   volume={350},
   date={2019},
   pages={1314--1323},
   issn={0001-8708},
   doi={10.1016/j.aim.2019.04.062},
}

\bib{Demailly}{article}{
   author={Demailly, J.-P.},
   title={Recent results on the Kobayashi and Green-Griffiths-Lang
   conjectures},
   journal={Jpn. J. Math.},
   volume={15},
   date={2020},
   number={1},
   pages={1--120},
   issn={0289-2316},
   doi={10.1007/s11537-019-1566-3},
}

\bib{Deschamps}{article}{
	author={Deschamps, M.},
	title={Courbes de genre g\'{e}om\'{e}trique born\'{e} sur une surface de type g\'{e}n\'{e}ral [d'apr\`es F. A. Bogomolov]},
	language={French},
	conference={ title={S\'{e}minaire Bourbaki, 30e ann\'{e}e (1977/78)}, },
	book={ series={Lecture Notes in Math.}, volume={710}, publisher={Springer, Berlin}, },
	date={1979},
	pages={Exp. No. 519, pp. 233--247},
}

\bib{magma}{article}{
	author={Bosma, W.},
	author={Cannon, J.},
	author={Playoust, C.},
	title={The Magma algebra system. {I}. The user language},
	note={Computational algebra and number theory (London, 1993)},
	journal={J. Symbolic Comput.},
	volume={24},
	date={1997},
	number={3-4},
	pages={235--265},
}

\bib{Debarre2004}{article}{
	author={Debarre, O.},
	title={Hyperbolicity of complex varieties},
	date={2004},
	notes={notes from \'ecole d'\'et\'e Pragmatic, Catania},
}

\bib{Fulton}{book}{
	author={Fulton, W.},
	title={Intersection theory},
	series={Ergebnisse der Mathematik und ihrer Grenzgebiete. 3. Folge. A Series of Modern Surveys in Mathematics [Results in Mathematics and Related Areas. 3rd Series. A Series of Modern Surveys in Mathematics]},
	volume={2},
	edition={2},
	publisher={Springer-Verlag, Berlin},
	date={1998},
	pages={xiv+470},
	isbn={3-540-62046-X},
	isbn={0-387-98549-2},
}

\bib{FultonLazarsfeld}{article}{
   author={Fulton, W.},
   author={Lazarsfeld, R.},
   title={Positive polynomials for ample vector bundles},
   journal={Ann. of Math. (2)},
   volume={118},
   date={1983},
   number={1},
   pages={35--60},
   issn={0003-486X},
   doi={10.2307/2006953},
}

\bib{GarciaFritz}{article}{
	author={Garcia-Fritz, N.},
	title={Sequences of powers with second differences equal to two and hyperbolicity},
	journal={Trans. Amer. Math. Soc.},
	volume={370},
	date={2018},
	number={5},
	pages={3441--3466},
	issn={0002-9947},
	doi={10.1090/tran/7040},
}

\bib{GarciaFritzIJNT}{article}{
	author={Garcia-Fritz, N.},
	title={Quadratic sequences of powers and Mohanty's conjecture},
	journal={Int. J. Number Theory},
	volume={14},
	date={2018},
	number={2},
	pages={479--507},
	issn={1793-0421},
	doi={10.1142/S1793042118500306},
}

\bib{GarciaFritzUrzua}{article}{
	author={Garc\'ia-Fritz, N.},
	author={Urz\'ua, G},
	title={Families of explicit quasi-hyperbolic and hyperbolic surfaces},
	journal={Math. Z.},
	note={to appear.},
}

\bib{M2}{article}{
	author = {Grayson, Daniel R. and Stillman, Michael E.},
	title = {Macaulay2, a software system for research in algebraic geometry},
	eprint = {http://www.math.uiuc.edu/Macaulay2/}
}

\bib{GreenGriffiths}{article}{
	author={Green, M.},
	author={Griffiths, P.},
	title={Two applications of algebraic geometry to entire holomorphic mappings},
	conference={ title={The Chern Symposium 1979 (Proc. Internat. Sympos., Berkeley, Calif., 1979)}, },
	book={ publisher={Springer, New York-Berlin}, },
	date={1980},
	pages={41--74},
}

\bib{EGAIII}{book}{
	label={EGA},
	author={Grothendieck, A.},
	title={\'El\'ements de g\'eom\'etrie alg\'ebrique : III. \'Etude cohomologique des faisceaux coh\'erents, Premi\`ere partie},
	journal={Publications Math\'ematiques de l'IH\'ES},
	publisher={Institut des Hautes \'Etudes Scientifiques},
	volume={11},
	year={1961},
	pages={5-167},
	zbl={0118.36206},
	language={fr},
	url={http://www.numdam.org/item/PMIHES_1961__11__5_0},
}

\bib{Hartshorne}{book}{
	author={Hartshorne, R.},
	title={Algebraic geometry},
	note={Graduate Texts in Mathematics, No. 52},
	publisher={Springer-Verlag, New York-Heidelberg},
	date={1977},
	pages={xvi+496},
	isbn={0-387-90244-9},
}

\bib{Jouanolou1978}{article}{
	author={Jouanolou, J. P.},
	title={Hypersurfaces solutions d'une \'{e}quation de Pfaff analytique},
	language={French},
	journal={Math. Ann.},
	volume={232},
	date={1978},
	number={3},
	pages={239--245},
	issn={0025-5831},
	doi={10.1007/BF01351428},
}

\bib{KollarMori}{book}{
   author={Koll\'{a}r, J.},
   author={Mori, S.},
   title={Birational geometry of algebraic varieties},
   series={Cambridge Tracts in Mathematics},
   volume={134},
   note={With the collaboration of C. H. Clemens and A. Corti;
   Translated from the 1998 Japanese original},
   publisher={Cambridge University Press, Cambridge},
   date={1998},
   pages={viii+254},
   isbn={0-521-63277-3},
   doi={10.1017/CBO9780511662560},
}

\bib{Lazarsfeld}{book}{
   author={Lazarsfeld, R.},
   title={Positivity in algebraic geometry. I},
   series={Ergebnisse der Mathematik und ihrer Grenzgebiete. 3. Folge. A
   Series of Modern Surveys in Mathematics [Results in Mathematics and
   Related Areas. 3rd Series. A Series of Modern Surveys in Mathematics]},
   volume={48},
   note={Classical setting: line bundles and linear series},
   publisher={Springer-Verlag, Berlin},
   date={2004},
   pages={xviii+387},
   isbn={3-540-22533-1},
   doi={10.1007/978-3-642-18808-4},
}

\bib{Miyaoka}{article}{
	author={Miyaoka, Y.},
	title={Algebraic surfaces with positive indices},
	conference={ title={Classification of algebraic and analytic manifolds}, address={Katata}, date={1982}, },
	book={ series={Progr. Math.}, volume={39}, publisher={Birkh\"{a}user Boston, Boston, MA}, },
	date={1983},
	pages={281--301},
}

\bib{MiyaokaBounds}{article}{
   author={Miyaoka, Y.},
   title={The maximal number of quotient singularities on surfaces with
   given numerical invariants},
   journal={Math. Ann.},
   volume={268},
   date={1984},
   number={2},
   pages={159--171},
   issn={0025-5831},
   doi={10.1007/BF01456083},
}

\bib{RoulleauRousseau}{article}{
	author={Roulleau, X.},
	author={Rousseau, E.},
	title={Canonical surfaces with big cotangent bundle},
	journal={Duke Math. J.},
	volume={163},
	date={2014},
	number={7},
	pages={1337--1351},
	issn={0012-7094},
	doi={10.1215/00127094-2681496},
}

\bib{Sarti2001}{article}{
	author={Sarti, A.},
	title={Pencils of symmetric surfaces in ${\Bbb P}_3$},
	journal={J. Algebra},
	volume={246},
	date={2001},
	number={1},
	pages={429--452},
	issn={0021-8693},
	doi={10.1006/jabr.2001.8953},
}

\bib{Segre1947}{article}{
	author={Segre, B.},
	title={Sul massimo numero di nodi delle superficie di dato ordine},
	language={Italian},
	journal={Boll. Un. Mat. Ital. (3)},
	volume={2},
	date={1947},
	pages={204--212},
}

\bib{Thomas}{thesis}{
	author={Thomas, J.},
	title={Contraction Techniques in the Hyperbolicity of Surfaces of General Type},
	note={Ph.\ D.\ thesis, Courant Institute of Mathematical Sciences},
	year={2013},
}

\bib{vanLuijk2000}{thesis}{
	author={van Luijk, R. M.},
	title={On perfect cuboids},
	note={Doctoraal scriptie, Universiteit Utrecht},
	year={2000},
}

\bib{Vojta2000}{article}{
	author={Vojta, P.},
	title={Diagonal quadratic forms and Hilbert's tenth problem},
	conference={ title={Hilbert's tenth problem: relations with arithmetic and algebraic geometry}, address={Ghent}, date={1999}, },
	book={ series={Contemp. Math.}, volume={270}, publisher={Amer. Math. Soc., Providence, RI}, },
	date={2000},
	pages={261--274},
	doi={10.1090/conm/270/04378},
}

\bib{Wahl1976}{article}{
	author={Wahl, J. M.},
	title={Equisingular deformations of normal surface singularities. I},
	journal={Ann. of Math. (2)},
	volume={104},
	date={1976},
	number={2},
	pages={325--356},
	issn={0003-486X},
	doi={10.2307/1971049},
}

\bib{Wahl1993}{article}{
	author={Wahl, J. M.},
	title={Second Chern class and Riemann-Roch for vector bundles on resolutions of surface singularities},
	journal={Math. Ann.},
	volume={295},
	date={1993},
	number={1},
	pages={81--110},
	issn={0025-5831},
}

\bib{Xu}{article}{
   author={Xu, Geng},
   title={Subvarieties of general hypersurfaces in projective space},
   journal={J. Differential Geom.},
   volume={39},
   date={1994},
   number={1},
   pages={139--172},
}

	\end{biblist}
\end{bibdiv}

\end{document}